\newcommand{\mybar}[1]{\makebox[0pt]{$\phantom{#1}\overline{\phantom{#1}}$}#1}
\DeclareRobustCommand{\SkipTocEntry}[5]{}
\title{Quasiconformal and geodesic trees}%
\author{Mario Bonk}
\address{Department of Mathematics, University of California, 
Los Angeles, CA 90095, USA}
\email{mbonk@math.ucla.edu}
\author{Daniel Meyer}
\address{Department of Mathematical Sciences, University of Liverpool,
  Liverpool L69 7ZL,  United Kingdom}
\email{dmeyermail@gmail.com}
\date{\today}
\newcommand\C{{\mathbb C}}
\newcommand\N{{\mathbb N}}
\newcommand\R{{\mathbb R}}
\newcommand\ba{\backslash}
\newcommand\dist{\operatorname{dist}}
\newcommand\diam{\operatorname{diam}}
\newcommand\id{\operatorname{id}}
\newcommand\inte{\operatorname{int}}
\renewcommand\:{\colon}
\newcommand\sub {\subseteq}
\newcommand\ra {\rightarrow}
\renewcommand\sub{\subset}
\def\length{\operatorname{length}}
\newcommand{\CDach}{\widehat{\C}}
\newcommand{\D}{\mathbb{D}}
\newcommand\ga{\gamma}
\newcommand\eps{\epsilon}
\providecommand{\abs}[1]{\lvert#1\rvert}
\newcommand{\X} {\mathbf{X}}
\newcommand{\V} {\mathbf{V}}
\newcommand{\qT} {\mathbf{T}}
\newtheorem{theorem}{Theorem}[section]
\newtheorem{proposition}[theorem]{Proposition}
\newtheorem{cor}[theorem]{Corollary}
\newtheorem{lemma}[theorem]{Lemma}
\newcommand{\dia}{\operatorname{\mathsf{dd}}}
\theoremstyle{definition}
\newtheorem{definition}[theorem]{Definition}
\theoremstyle{remark}
\numberwithin{equation}{section}
\begin{document}



\subjclass[2010]{Primary 30L10; Secondary 51F99}

\keywords{Quasiconformal tree, geodesic tree, quasisymmetry,
  doubling metric spaces, bounded turning, conformal dimension}

\abstract{A quasiconformal tree is a  metric tree that is doubling and of bounded turning. We prove that every quasiconformal tree is quasisymmetrically equivalent  to a geodesic tree with Hausdorff dimension arbitrarily close to 1.}
\endabstract

\maketitle

\tableofcontents

\section{Introduction}\label{s:Intro}

An important question in geometric analysis is whether a given
metric space (belonging to some class of spaces) is
geometrically
equivalent to
a model space in a natural way. Many results in
mathematics can be seen from this perspective (such as the existence
of isothermal or conformal co\-ordinates on surfaces or the
Riemann mapping theorem).  For general metric spaces there are
various ways to interpret geometric equivalence: up to isometric
or up to bi-Lipschitz equivalence, for example.  In the present
paper the relevant notion of geometric equivalence is based on a
class of homeomorphisms that are close to conformal or
quasiconformal maps in a classical complex-analytic context,
namely quasisymmetries.

 By definition, a homeomorphism $f\: X\to Y$ between metric spaces $(X,d_X)$ and $(Y, d_Y)$ is said to be \emph{quasisymmetric} or a {\em quasisymmetry},  if there exists a homeomorphism $\eta\:[0,\infty)\to[0,\infty)$ (playing the role of a control function for distortion) such that
$$
\frac{d_Y(f(x),f(y))}{d_Y(f(x),f(z))}\le\eta\left(\frac{d_X(x,y)}{d_X(x,z)}\right)
$$
for all distinct points $x,y,z\in X$. 
 The 
 composition of two quasisymmetries (when defined) and the inverse 
 of a quasisymmetry are quasisymmetric. 
So if we call two metric spaces $X$ and $Y$
{\em quasisymmetrically equivalent} if there exists a quasisymmetry
$f\:X\ra Y$, then we have  a notion of geometric equivalence for metric spaces. Since every bi-Lipschitz homeomorphism is a quasisymmetry, this is a weaker, and hence more flexible, notion than bi-Lipschitz (or even isometric) equivalence (for more background and related discussions see \cite[Section 4.1]{BM17} and 
\cite[Chapters 10--12]{He}). 

The {\em quasisymmetric uniformization problem} (see
\cite{Bo06}) asks for natural conditions when a given metric space $X$ from a class of spaces is quasisymmetrically equivalent to some model space $Y$. This problem is relevant in various contexts. For example, the  Kapovich-Kleiner conjecture in geometric group theory (see \cite[Conjecture 6]{KK00}) amounts to the problem of showing that every Sierpi\'nski carpet arising as the boundary of a Gromov hyperbolic group is quasisymmetrically equivalent to a ``round" Sierpi\'nski carpet
(see \cite{Bo11} for a related discussion).  

The prototypical instance of a quasisymmetric uniformization
 result is the characterization by Tukia and V\"ais\"al\"a of metric spaces quasisymmetrically
equivalent to the unit interval $[0,1]$.
 In order to formulate  their  theorem   we need two definitions.  

We say that a metric space $(X,d)$ is of {\em bounded turning} if
there exists a constant $K\ge 1$ such that for all $x,y\in X$
there exists a compact connected set $E\sub X$ with $x,y\in E$
and 
$$ \diam (E) \le K d(x,y).$$ 
 In this case, we say that $(X,d)$ is of $K$-bounded turning. 

A metric space $(X,d)$ is {\em doubling} if there exists a
constant $N\in \N$ (the {\em doubling constant} of $X$) such that
each ball in $X$ of radius $R>0$ can be covered by $N$ (or fewer)
balls of radius $R/2$.

 Tukia and V\"ais\"al\"a showed that 
 a metric space $J$ homeomorphic to $[0,1]$ is quasisymmetrically equivalent to $[0,1]$ if and only if
it is doubling and of bounded turning (see \cite{TV}). In other
words, one can ``straighten out''  the arc $J$ (which may well
have Hausdorff dimension $>1$) to the interval $[0,1]$ by a
quasisymmetry.


In the present paper, we study the quasisymmetric uniformization
problem for metric trees. By definition, a {\em (metric) tree} is a compact, connected, and
locally connected metric space $(\qT, d)$ that contains at least
two distinct points and has the following property: if
$x,y\in \qT$, then there exists a unique arc in $\qT$ with
endpoints $x$ and $y$. This arc is denoted by $[x,y]$. We allow
$x=y$ here, in which case we consider $[x,y]=\{x\}$ as a {\em
  degenerate} arc. 
  
  The underlying topological space of a tree is  often called a {\em dendrite} in the literature. Since we are mostly interested in metric properties and want to emphasize this metric aspect, we  prefer the name tree for these objects.  Motivated by the  Tukia-V\"ais\"al\"a result and the connection with quasiconformal geometry, we introduce the following terminology.

\begin{definition}\label{def:qt}
A metric
tree is {\em quasiconformal} if it is doubling and of bounded turning.
\end{definition} 
In the following, we usually call a quasiconformal tree a {\em
  qc-tree} for brevity.

Trees appear in many contexts in mathematics, for example as
Julia sets of polynomials. The Julia set
$\mathcal{J}(P)$ of the polynomial $P(z)=z^2+i$ is a tree
(see \cite[Example after Theorem~V.4.2]{CG}).
Actually,
$\mathcal{J}(P)\sub \C$ is a qc-tree if  it is equipped with the ambient Euclidean metric on $\C$.   Indeed,  $\mathcal{J}(P)$
is of bounded turning as easily follows from the fact that
$\C\setminus \mathcal{J}(P)$ is a John domain
(see \cite[Theorem~VII.3.1]{CG}).
Since every subset of a Euclidean space (such as the
complex plane $\C$) is doubling, $\mathcal{J}(P)$ is doubling.

In  analogy  to  the Tukia-V\"ais\"al\"a theorem  
one can raise the question whether all arcs in  a qc-tree can be straightened out 
simultaneously by a quasisymmetry.  For a precise formulation of this question the following concept is relevant. 

A metric space $(X,d)$ is called {\em geodesic} if
any two points $x,y\in X$  can be joined by a {\em geodesic
segment}, i.e., by an arc $[x,y]$ with endpoints $x$ and $y$ whose
length is equal to $d(x,y)$.  


The following statement is the main result of this paper.

\begin{theorem}
  \label{main} 
  Every  quasiconformal tree is
  quasisymmetrically equivalent to a geodesic tree.
\end{theorem}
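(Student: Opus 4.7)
My plan is to construct a new metric $\rho$ on $\qT$, quasisymmetric to $d$, in which every arc $[x,y]\sub\qT$ is a geodesic of length $\rho(x,y)$; the identity map will then realize $(\qT,d)$ as quasisymmetrically equivalent to the geodesic tree $(\qT,\rho)$.

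As a warm-up, consider the naive attempt $\rho_0(x,y):=\diam_d([x,y])$. If $m$ is the median of $x,y,z$ in $\qT$ (the unique point lying on all three arcs $[x,y]$, $[y,z]$, $[x,z]$), then $[x,y]=[x,m]\cup[m,y]$ with $[x,m]\sub[x,z]$ and $[m,y]\sub[y,z]$, so
\[
\diam([x,y])\le\diam([x,m])+\diam([m,y])\le\diam([x,z])+\diam([y,z]),
\]
which makes $\rho_0$ a metric. However, because $\diam$ is only subadditive, not additive, on disjoint sub-arcs, $\rho_0$ is generally not geodesic, and its induced length metric is infinite as soon as $\qT$ contains arcs of Hausdorff dimension greater than one, which is the generic situation.

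The plan to fix this is to replace $\diam$ by a genuinely additive function on sub-arcs. I would aim to produce a metric $d'$ on $\qT$ quasisymmetric to $d$ together with a finite, positive Borel measure $\mu$ on $\qT$ satisfying the uniform estimate
\[
\mu([x,y])\asymp \diam_{d'}([x,y])^Q
\]
for some exponent $Q\ge 1$; concretely, $\mu$ should be comparable to the $Q$-dimensional Hausdorff measure of $d'$, and the arcs of $\qT$ should be uniformly Ahlfors $Q$-regular in $d'$. Setting $\rho(x,y):=\mu([x,y])$ and repeating the median computation with the additivity identity $\mu([x,z])+\mu([y,z])=\mu([x,y])+2\mu([m,z])$ yields the triangle inequality for $\rho$. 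More importantly, additivity implies that for any partition $x=p_0,p_1,\dots,p_n=y$ of $[x,y]$ along the arc, $\sum_i\mu([p_i,p_{i+1}])=\mu([x,y])=\rho(x,y)$, so the arc $[x,y]$ has $\rho$-length exactly $\rho(x,y)$ and $(\qT,\rho)$ is geodesic. The uniform regularity estimate combined with bounded turning gives $\rho(x,y)\asymp d'(x,y)^Q$, so the identity $(\qT,d')\to(\qT,\rho)$ is a quasisymmetry with power-law distortion $\eta(t)=Ct^Q$, and composing with the quasisymmetry $(\qT,d)\to(\qT,d')$ finishes the argument.

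The main obstacle is the construction of the regular metric $d'$ and the measure $\mu$. Doubling controls the combinatorial complexity of sub-arcs at any fixed scale but carries no measure-theoretic content, and bounded turning relates intrinsic and extrinsic diameters without enforcing any regularity. I would expect to build $d'$ through a dyadic hierarchical decomposition of $\qT$, cut into sub-trees of controlled diameter at each scale by using the bounded-turning hypothesis to slice along well-chosen arcs, and then to adjust distances by a multiplicative \emph{conformal} weight across scales so that every arc ends up uniformly Ahlfors regular of common exponent $Q$. The paper's abstract asserts that $Q$ can be taken arbitrarily close to $1$, which suggests that the exponent is a free parameter of the construction, tunable at the cost of more aggressive distortion of the metric at small scales.
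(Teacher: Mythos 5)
Your high-level framework is sensible, and the median/additivity argument neatly reduces ``geodesic'' to ``$\rho(x,y)=\mu([x,y])$ for a non-atomic measure finite on arcs''; this is essentially equivalent to the paper's eventual goal (indeed, with $Q=1$ and $\mu$ taken to be arc length in the final metric, your framework specializes to the paper's output). But the proposal stops exactly where the work begins: you explicitly defer the construction of $d'$ and $\mu$ to ``a dyadic hierarchical decomposition \dots adjusted by a multiplicative conformal weight,'' which is a description of a strategy rather than an argument. The entire technical content of the paper (Sections~3--7) is devoted to making this one sentence work, and none of the obstacles encountered there are anticipated.

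The two main gaps are as follows. First, \emph{where to cut the tree.} The natural guess---slice at branch points to reduce complexity---fails: a branch point can have arbitrarily small complementary components, so the pieces produced have no lower diameter bound. The paper instead cuts at \emph{double} points, and these must be chosen both with large complementary components (the quantity $D_\qT$ in \eqref{eq:DT}) \emph{and} quantitatively separated from all branch points (condition \eqref{eq:d_x_branch}); establishing the existence of such points is the nontrivial ``sun and shadow'' argument of Proposition~\ref{prop:branchaway} and Lemma~\ref{lem:Sonne}, which uses the doubling hypothesis in an essential way. Nothing in your sketch hints that branch points are the wrong cut locus or that a separation-from-branch-points estimate is required. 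Second, \emph{why the conformal weight produces a consistent limit.} Assigning relative weights across scales only yields a well-defined distance if the infimal chain lengths $\varrho_n$ do not blow up under refinement. The paper's mechanism for this is the designation of ``main vertices'' of each tile and the identity \eqref{eq:lPp_wX}, which ensures that refining a chain across one level increases its weighted length by at most a geometrically summable amount (Lemma~\ref{lem:simpknchanest}); this in turn relies on the branch-point separation via Lemma~\ref{lem:branch_not_partial} and Lemma~\ref{lem:ch+verts}. Your sketch does not engage with how a ``multiplicative conformal weight'' would be normalized so that arc lengths stabilize, and without such a normalization the construction of $\mu$ (or equivalently of $\rho$) has no reason to converge. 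So while the outer shell of your argument is correct, the proposal currently has no content at the point where the theorem is actually proved.
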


Every  arc that is doubling and of bounded turning is
a qc-tree. This implies that  Theorem~\ref{main} includes the Tukia-V\"ais\"al\"a theorem as a special case, and so can it  be viewed as a generalization.  

Various  improvements and variants  of  Theorem~\ref{main} are conceivable. For
example, one can ask whether additional assumptions yield quasisymmetric
equivalence to a single specified space. We consider a question of this type in the follow-up paper \cite{BM}, where it is shown that a qc-tree
is quasisymmetrically
equivalent to the {\em continuum self-similar tree} (as defined in \cite{BT18}) if and only if it is {\em trivalent}  and {\em uniformly branching} (see \cite{BM} for the relevant definitions).


Another natural question is ``how small'' we can make the 
geodesic tree $T$ that is the quasisymmetric
image of the given  qc-tree $\qT$. If $\dim_H T$ denotes the Hausdorff dimension of $T$, then clearly $\dim_H T\ge 1$, because 
$T$ always contains a non-degenerate arc. We will show that  $\dim_H T$ can actually be arbitrarily close to $1$ and will establish the following improved version of Theorem~\ref{main}.  
 
\begin{theorem}
  \label{thm:qtree_confdim1}
 If $\qT$ is a quasiconconformal tree and $\alpha>1$, then $\qT$ is quasisymmetrically equivalent to a geodesic tree $T$ with $\dim_HT\le\alpha$. 
\end{theorem}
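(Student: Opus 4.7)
My plan is to construct directly on $\qT$ a new length metric $\rho$ such that $(\qT, \rho)$ is a geodesic tree, the identity $(\qT, d) \to (\qT, \rho)$ is a quasisymmetry, and $\dim_H (\qT, \rho) \le \alpha$. By Theorem~\ref{main} I may first replace $\qT$ by a quasisymmetrically equivalent geodesic tree, so I assume $\qT$ is geodesic throughout. Fix a parameter $\lambda \in (0, 1)$, to be chosen small below.

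The construction uses a hierarchical edge decomposition. Using doubling, pick a nested sequence $V_0 \sub V_1 \sub \cdots$ of maximal $2^{-n}$-separated subsets of $\qT$ whose union is dense. Let $T_n$ be the smallest subtree of $\qT$ containing $V_n$: a finite tree whose edges are subarcs of $\qT$ of $d$-diameter comparable to $2^{-n}$. By doubling, each edge of $T_n$ is subdivided in $T_{n+1}$ into at most $M$ edges, where $M$ depends only on the doubling constant. For every edge $e$ that first appears in $T_n$ (i.e., lies in $T_n$ but not in $T_{n-1}$), assign the length $\ell(e) = \lambda^n$. These data determine a compatible length structure on $T_\infty := \bigcup_n T_n$, and the induced length metric extends to the metric completion; one verifies that this completion is canonically identified with $\qT$, producing a length metric $\rho$ on $\qT$.

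Three properties now need to be checked. First, $(\qT, \rho)$ is a geodesic tree, because the tree structure of $\qT$ is preserved and each arc has an attained $\rho$-length equal to the sum of the $\ell$-lengths of its edges. Second, $\dim_H (\qT, \rho) \le \alpha$: the tree $T_n$ has at most $O(M^n)$ edges, each of $\rho$-diameter $\lambda^n$, and the components of $\qT \setminus T_n$ (each contained in a later refinement) have $\rho$-diameter controlled by a convergent geometric series and are at most $O(M^n)$ in number; so the $\alpha$-dimensional premeasure is bounded by $O((M\lambda^\alpha)^n)$, which vanishes when $\lambda < M^{-1/\alpha}$. Third, the identity $(\qT, d) \to (\qT, \rho)$ is a quasisymmetry: for a pair $x, y$ with $d(x,y) \asymp 2^{-n}$, bounded turning forces the arc $[x,y]$ to have $d$-diameter $\asymp 2^{-n}$, and doubling bounds the number of edges of $T_m$ (for each $m \ge n$) that it meets. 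Summing the contributions gives $\rho(x,y) \asymp \lambda^n$, hence $\rho \asymp d^s$ with $s = \log(1/\lambda)/\log 2$; this comparability yields quasisymmetry with $\eta(t) = C t^s$.

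The main obstacle is the uniform verification of $\rho(x,y) \asymp d(x,y)^s$, especially for points not lying in any $V_n$. The upper bound requires summing contributions from edges at all scales $m \ge n$ using the doubling dimension, which in turn forces a smallness condition on $\lambda$; the matching lower bound requires locating at least one edge of scale comparable to $n$ inside the arc $[x,y]$, which is delicate because $x$ and $y$ need not themselves belong to $T_\infty$. Once the comparability is established uniformly, the dimension estimate and the geodesic-tree property follow routinely from the construction.
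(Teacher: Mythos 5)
Your high-level strategy — build a multiscale decomposition of $\qT$, assign weights of geometric decay to the pieces, and recover a geodesic metric from the resulting length structure — is indeed the strategy of the paper. But the specific construction has a gap at exactly the point the paper spends Sections~\ref{sec:place_sun}--\ref{sec:goodpts} addressing, and that gap cannot be repaired by the ``routine verification'' you defer.

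The critical false step is the assertion that the edges of $T_n$, the smallest subtree spanning a maximal $2^{-n}$-separated set $V_n$, have $d$-diameter comparable to $2^{-n}$. The upper bound $\lesssim 2^{-n}$ holds since $V_n$ is maximal, but there is no lower bound. When you take the spanning subtree, new vertices appear as branch points of $T_n$ that were never in $V_n$, and nothing prevents such a branch point $b$ from lying at an arbitrarily small distance from a point $v\in V_n$ on one of its branches: the other branches of $b$ can be long enough to keep all points of $V_n$ mutually $2^{-n}$-separated while the edge $[v,b]$ of $T_n$ is as short as you like. Since your length assignment gives every such edge weight $\lambda^n$, a pair of points $x,y$ straddling $b$ at tiny $d$-distance then acquires $\rho$-distance at least $\lambda^m$ for the (possibly much smaller) level $m$ at which that short edge first appeared. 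This destroys the claimed comparability $\rho(x,y)\asymp d(x,y)^s$, which you correctly identify as the crux but whose failure you do not locate correctly: the difficulty is not that $x,y$ may avoid the nets $V_n$, it is that branch points of the ambient tree force spanning trees over arbitrary nets to have edges of uncontrolled (arbitrarily small) size. This is precisely the phenomenon the paper warns about in the introduction, and it is why the vertices $\V^n$ are chosen to be $(\beta,\gamma)$-good \emph{double} points at scale $\delta^n$ (conditions \eqref{eq:double_Delta} and \eqref{eq:d_x_branch}), whose existence requires the ``place in the sun'' Lemma~\ref{lem:Sonne} and Propositions~\ref{prop:branchaway}, \ref{prop:V_Delta_sep}, \ref{prop:maxdouble}. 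Only with those quantitative guarantees do the resulting tiles satisfy $\diam(X^n)\asymp\delta^n$ (see \eqref{eq:Xn_dn}), which is what powers the uniform estimates behind the quasisymmetry.

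A second, related issue is that your weight assignment does not ``determine a compatible length structure'' in any automatic sense. If an edge $e$ of $T_n$ with $\ell(e)=\lambda^n$ is subdivided into $k$ edges of $T_{n+1}$, each freshly weighted $\lambda^{n+1}$, then $k\lambda^{n+1}$ need not equal $\lambda^n$, so the candidate distance functions at successive scales are not nested and the existence of a well-defined limit metric is precisely what must be proved. The paper handles this with the carefully engineered weight scheme of Section~\ref{sec:weights-vertices} (especially \eqref{eq:lPp_wX} and \eqref{eq:weight_w_not_vw}), which ensures the chain lengths are essentially monotone (Lemmas~\ref{lem:length_vert} and \ref{lem:simpknchanest}), yielding the limit in Lemma~\ref{lem:rhonlimexis}. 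Moreover the correct argument does not produce a snowflake relation $\rho\asymp d^s$: the limit metric $\varrho$ satisfies $\varrho(x,y)\asymp w(X^{m(x,y)})$, where the weight encodes the combinatorics of which tiles lie along the main arcs (Lemma~\ref{lem:rho_w_m}, Proposition~\ref{prop:mett}), and this is quite far from a power of $d$. The quasisymmetry is instead extracted via the weak-quasisymmetry criterion \cite[Theorem~10.19]{He} after establishing doubling of $(\qT,\varrho)$ (Lemmas~\ref{lem:T_rho_doubling} and \ref{lem:id_qs}). In short: the outline is the right one, but the choice of vertices and the compatibility of weights are not ``delicate but routine'' — they are the theorem's actual content.
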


In general, one cannot achieve  $\dim_HT=1$ here.  An example when this is not possible can be found in \cite{BiT01} (see also \cite[Theorem 1.6] {Az15} for a  general related statement).
If $\qT$ is the continuum self-similar tree and $T$ is any tree
that is quasisymmetrically equivalent to $\qT$, then actually
$\dim_HT>1$.

The 
 \emph{conformal dimension} $\operatorname{confdim}(X)$ of a
metric space $X$  is
defined as the infimum of all Hausdorff dimensions of metric
spaces $Y$ that are quasisymmetrically equivalent to $X$. We
refer to \cite{MT10} for more background on this concept. Theorem~\ref{thm:qtree_confdim1} implies the following immediate 
consequence. 

\begin{cor} If $\qT$ is a quasiconformal tree, then 
$\operatorname{confdim}(\qT)=1$. 
\end{cor}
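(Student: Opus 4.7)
The plan is to combine Theorem~\ref{thm:qtree_confdim1} (already proved in the paper by assumption) with the classical observation that any continuum with more than one point has Hausdorff dimension at least $1$. The two inequalities $\operatorname{confdim}(\qT)\le 1$ and $\operatorname{confdim}(\qT)\ge 1$ are essentially independent and will be handled separately.

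For the upper bound, I would apply Theorem~\ref{thm:qtree_confdim1} with an arbitrary $\alpha>1$: this produces a geodesic tree $T_\alpha$ that is quasisymmetrically equivalent to $\qT$ and satisfies $\dim_H T_\alpha\le\alpha$. By the very definition of conformal dimension as an infimum over Hausdorff dimensions of quasisymmetric images, this gives $\operatorname{confdim}(\qT)\le\alpha$. Letting $\alpha\searrow 1$ yields $\operatorname{confdim}(\qT)\le 1$.

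For the lower bound, I would observe that by the definition of a tree given earlier, $\qT$ contains at least two distinct points $x,y$, and hence the non-degenerate arc $[x,y]\sub\qT$. If $Y$ is any metric space and $f\:\qT\to Y$ is a quasisymmetry, then in particular $f$ is a homeomorphism, so $A:=f([x,y])$ is a non-degenerate continuum (a connected compact set containing at least two distinct points $f(x),f(y)$). For any such continuum, the $1$-Lipschitz map $p\mapsto d_Y(f(x),p)$ sends $A$ onto a connected subset of $[0,\infty)$ containing $0$ and $d_Y(f(x),f(y))>0$, i.e., onto a non-degenerate interval, so $\dim_H A\ge 1$. Monotonicity of Hausdorff dimension then gives $\dim_H Y\ge\dim_H A\ge 1$. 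Taking the infimum over all quasisymmetric images $Y$ yields $\operatorname{confdim}(\qT)\ge 1$.

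Since this is truly an immediate consequence of the preceding theorem (as the paper itself indicates), I do not anticipate any serious obstacle; the only point worth being careful about is justifying $\dim_H A\ge 1$ for a non-degenerate continuum $A$, which is standard but should be stated explicitly for self-containment.
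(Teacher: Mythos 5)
Your proposal is correct and takes essentially the same route as the paper: the upper bound $\operatorname{confdim}(\qT)\le 1$ follows directly from Theorem~\ref{thm:qtree_confdim1} by letting $\alpha\searrow 1$, and the lower bound $\operatorname{confdim}(\qT)\ge 1$ follows because any quasisymmetric (indeed, any homeomorphic) image of $\qT$ contains a non-degenerate arc and hence has Hausdorff dimension at least $1$, exactly as the paper observes in the paragraph preceding Theorem~\ref{thm:qtree_confdim1}. Your Lipschitz-projection justification of the standard fact $\dim_H A\ge 1$ for a non-degenerate continuum $A$ is a fine self-contained way to make this explicit.
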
 
This last statement is not new, but was originally proved by Kinneberg \cite[Proposition 2.4]{Kin}.

We will now summarize the main ingredients for the proofs of Theorems~\ref{main} and~\ref{thm:qtree_confdim1}. The basic idea 
 is to define a new geodesic metric $\varrho$ on the given 
 qc-tree $(\qT,d)$ so that the 
identity map $\id_{\qT}\: (\qT,d) \ra (\qT, \varrho)$ is a quasisymmetry. In order to define $\varrho$, we will carefully
choose a sequence of decompositions $\X^n$  of $\qT$ into subtrees. We call the elements $X^n$  in $\X^n$ tiles of level $n$ or $n$-tiles.  To each $n$-tile $X^n$ we will assign a weight 
$w(X^n)$  by an inductive process on the level $n\in \N$. These
weights can then be used to define a distance function
$\varrho_n$ on $\qT$: one  infimizes the total length with
respect to this weight over chains of $n$-tiles from one point in $\qT$ to another (see  \eqref{eq:deflengthP} and \eqref{eq:defrhon}). We will show  that 
with our choices, the limit 
\begin{equation} \label{eq:linrnexists} \varrho(x,y)=\lim_{n\to \infty}\varrho_n(x,y)
\end{equation}
exists for all 
$x,y\in \qT$ (Lemma~\ref{lem:rhonlimexis}) and defines a geodesic metric on $\qT$ (Lemma~\ref{lem:rho_metric}).  
We have  $\diam_\varrho(X)\asymp w(X)$ for the $\varrho$-diameter
of each  tile $X$ (see Proposition~\ref{prop:mett}~\ref{item:mett1}). So in a sense the metric $\varrho$ is a ``conformal" deformation of the original metric $d$ on $\qT$ controlled by the weight $w(X)$ near each tile $X$.  The fact that  
$\id_{\qT}\: (\qT,d) \ra (\qT, \varrho)$ is a quasisymmetry
can then easily be derived from geometric properties of
tiles (see Lemma~\ref{lem:id_qs}). 
Theorem~\ref{main} follows.

The choice of the weights and hence the construction of $\varrho$ involves a parameter $\eps_0>0$. We will see that 
if we choose $\eps_0$ close to $0$, then the  Hausdorff dimension of $(\qT, \varrho)$ is close to $1$. This immediately gives 
Theorem~\ref{thm:qtree_confdim1}. 

The main difficulty in this general approach is how to define the decompositions $\X^n$. It is a natural idea to ``cut" the tree $\qT$  into subtrees by using auxiliary points. We will indeed follow this procedure by defining an ascending  sequence of finite sets $\V^1\sub \V^2 \sub \dots$ that we use to cut  $\qT$. More precisely, the tiles of level $n$ are  precisely  the closures of the complementary components of $\V^n$, i.e., the closures of the components of 
$\qT\setminus \V^n$. The construction of the sets $\V^n$ involves
a (small) parameter
$\delta\in (0,1)$.
For each $n$-tile $X^n$ we will then have $\diam_d(X^n)\asymp\delta^n$. All of this looks natural and even straightforward, but there is a surprising  subtlety here. 
Namely, one might expect that the $n$-vertices, i.e., the elements in $\V^n$ used for cutting the tree,  should be \emph{branch points} of $\qT$
(points $b\in \qT$ such that $\qT\setminus \{b\}$ has at least three components); indeed, at least  
on an intuitive level, cutting $\qT$ in a branch point should
result in branches  with reduced topological or metric
complexity.
This was exactly the procedure in the recent paper 
\cite{BT18}, where topological characterizations of metric trees
were given.  We also use this idea in our forthcoming paper
\cite{BM}.
However,
in the present context, 
cutting our given qc-tree $\qT$ at a branch point $b$ leads to the  problem that  we cannot expect 
good uniform control for  the size  of the components of $\qT\setminus\{b\}$, because some of these components might be very small.  

For this reason, we cut our given qc-tree  $\qT$  at \emph{double
  points} $v\in \qT$, i.e., points $v$ such that  $\qT\setminus \{v\}$ has precisely two components. These double points $v$ are chosen so that the two components of $\qT\setminus \{v\}$ are not too small and so that 
 $v$ stays away from the branch points of $\qT$ in a precise
quantitative way (see \eqref{eq:double_Delta} and
\eqref{eq:d_x_branch}; the relevant definitions can be found in \eqref{eq:DT} and \eqref{eq:HT}).

The paper is organized as follows. In Section~\ref{sec:diameter-distance} we review some basic topological facts about trees.
We also show that in a tree $\qT$ of bounded turning one can replace 
the original metric up to bi-Lipschitz equivalence by a {\em
  diameter metric} $d$. It is characterized by the property  that
$\diam\, [x,y]=d(x,y)$ for all $x,y\in \qT$. The change to a
diameter metric will allow us to make some simplifications of our arguments. In Section~\ref{sec:place_sun} we will prove a general fact of independent interest: 
if  on an arc some points cast a ``shadow" satisfying suitable conditions, then one can always find a ``place in the sun". We use this to  find double points in a qc-tree $\qT$ with quantitative separation from  branch points (see  Proposition~\ref{prop:branchaway}).  

In Section~\ref{sec:goodpts} we introduce the somewhat technical
concept of a $(\beta, \ga)$-good double point at scale $\Delta>0$.
We show that with suitable choices of the parameters cutting the
qc-tree $\qT$ in a maximal $\Delta$-separated set of
$(\beta, \ga)$-good double points at scale $\Delta>0$ results in
pieces that have diameter comparable to $\Delta$
(Proposition~\ref{prop:maxdouble}). This fact is used in
Section~\ref{sec:subdividing-tree} to define the subdivisions of
$\qT$ into tiles as discussed above. 
We  record  various
statements about the geometric properties of these tile
decompositions. Weights of tiles are then defined in
Section~\ref{sec:weights-vertices}. There we establish the facts
about weights that are needed later on. In
Section~\ref{sec:constr-geod-metr} we define the metric $\varrho$
and show that it is geodesic. The proof of
Theorem~\ref{main} is then completed in
Section~\ref{sec:quasisymmetry} and the proof of
Theorem~\ref{thm:qtree_confdim1} is given in
Section~\ref{sec:lower-hausd-dimens}. We conclude with remarks and  open problems   in
Section~\ref{sec:open-probl-concl}.

\subsection{Notation}
\label{sec:notation}

We summarize some  notation used throughout  this paper. 

When an object $A$ is defined to be another object $B$, we write $A\coloneqq B$
for emphasis.  Two non-negative quantities $a$ and $b$ are said to be {comparable}
if there is a constant $C\ge 1$ (usually depending on some  ambient para\-meters)   such that
\begin{equation*}
  \frac{1}{C}a\leq b\leq C a.
\end{equation*}
We then write 
$a\asymp b$.
The constant $C$ is referred to  as $C(\asymp)$. Similarly, we write
  $a\lesssim b$ or $b\gtrsim a$,  
if there is a constant $C>0$ such that $a\leq C b$, and  refer to
the constant $C$ as $C(\lesssim)$ or $C(\gtrsim)$. If we want to emphasize the parameters $\alpha$, $\beta, \dots$ on which $C$ depends, then we write $C=C(\alpha, \beta, \dots)$. 


We use the standard notation $\N=\{1,2,\dots\}$ and
$\N_0=\{0,1, 2, \dots \}$.

The cardinality of a set $X$ is denoted by $\#X$ and the identity
map on $X$ by $\id_X$.  Let $(X,d)$ be a metric space, $a\in X$,
and $r>0$. We denote by $B_d(a,r)=\{x\in X:d(a,x)< r\}$ the open ball
and by $\mybar{B}_d(a,r)=\{x\in X: d(a,x)\le r\}$ the closed
ball of radius $r$ centered at $a$. If $A,B\sub X$, we let
$\diam_d(A)$ be the diameter, $\overline A$ be the closure of $A$
in $X$, $\inte(A)$ be the interior  of $A$
in $X$, and
\begin{equation*}
  \dist_d(A,B)\coloneqq\inf\{d(x,y): x\in A,\, y\in B\}
\end{equation*}
be the distance of $A$ and $B$. If $x\in X$, we set
$\dist_d(x, A)\coloneqq\dist_d(\{x\}, A)$. We drop the subscript
$d$ from our notation for $B_d(a,r)$, etc., if the metric $d$ is
clear from the context.

\section{Auxiliary facts}
\label{sec:diameter-distance}

In this section we collect some auxiliary statements that  will be used 
later. 

Let $(X,d)$ be a metric space. A set $S\subset X$ is called
\emph{$s$-separated} for some $s>0$ if all 
distinct points $x,y\in S$ satisfy $d(x,y)\geq s$. Such a
set $S$ is a {\em maximal} $s$-separated set if $S$ is not
contained  in a strictly larger subset of $X$ that is also
$s$-separated. Every $s$-separated set $S\sub X$ is contained in
a maximal $s$-separated set $S'\sub X$.  If  $X$ is compact, then  every $s$-separated set $S\sub X$ must be  finite. 

If the space $(X,d)$ is doubling (as defined in the
introduction), then for each 
$0<\lambda<1$ there is a number $N'=N'(\lambda, N)\in \N$ only
depending on $\lambda$ and the doubling constant $N$ of $X$  such
that the following condition is true:  if  $s>0$ and $S\sub X$ is
a $\lambda s$-separated set contained in a ball $B(x,s)$ with
$x\in X$, then $S$ contains at most $N'$ points. Conversely, if
this condition is true for some $0<\lambda<1$ and $N'\in \N$,
then $X$ is doubling with a doubling constant $N=N(\lambda, N')$
only depending on $\lambda$ and $N'$ (see
\cite[Exercise~10.17]{He}).

The doubling property is preserved under quasisymmetries, and in particular 
under bi-Lipschitz maps;
in general though, the doubling constant will change (see \cite[Theorem~10.18]{He}).

 An {\em arc} $J\sub X$  is a set homeomorphic 
 to  the  unit interval $[0,1]\sub \R$. A {\em (metric) arc} $(J,d)$ is a metric space homeomorphic  to  $[0,1]$.  The points $a,b\in J$ corresponding to $0,1\in [0,1]$ are called the {\em endpoints} 
 of $J$.  We denote by $\partial J\coloneqq\{a,b\}$ the set of endpoints of $J$, and by $\inte(J)\coloneqq J\setminus \partial J$ the set of {\em interior points} of $J$.  
 
 We require an elementary lemma. 

\begin{lemma}
  \label{lem:divide_A_N}
  Let $(J,d)$ be an  arc and $n\geq 2$ be an integer. Then we
  can decompose  $J$ into $n$ non-overlapping   subarcs of equal diameter 
  $\Delta\ge \tfrac 1n \diam(J)$.  
  \end{lemma}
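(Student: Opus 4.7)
The plan is to parameterize $J$ via a homeomorphism $\varphi\colon[0,1]\to J$ and study the continuous function $D(s,t)\coloneqq\diam(\varphi([s,t]))$, which is monotone non-decreasing in $t$ and non-increasing in $s$, with $D(s,s)=0$ (continuity of $D$ follows from uniform continuity of $\varphi$ together with Hausdorff continuity of the diameter functional on compact sets). First I would note that the lower bound $\Delta\ge\tfrac1n\diam(J)$ in the statement is automatic once a decomposition exists: if the cut points are $0=t_0<t_1<\cdots<t_n=1$, then for $x\in\varphi([t_{i-1},t_i])$ and $y\in\varphi([t_{j-1},t_j])$ with $i\le j$, iterating the triangle inequality along the chain $x,t_i,t_{i+1},\ldots,t_{j-1},y$ (each consecutive pair lies in some common subarc of diameter $\Delta$) gives $d(x,y)\le(j-i+1)\Delta\le n\Delta$, whence $\diam(J)\le n\Delta$.

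For existence of the equal-diameter decomposition I would induct on $n$. The base case $n=2$ follows from the intermediate value theorem applied to the continuous function $f(t)\coloneqq D(0,t)-D(t,1)$, which satisfies $f(0)=-\diam(J)<0$ and $f(1)=\diam(J)>0$; any zero $t^*\in(0,1)$ yields two subarcs of common diameter. For the inductive step $n\to n+1$, I use $\tau\in(0,1]$ as a ``final cut point'' parameter: by the inductive hypothesis, for each $\tau$ the subarc $\varphi([0,\tau])$ admits a splitting into $n$ subarcs of a common diameter $\delta(\tau)$. Granting that $\delta$ can be chosen to depend continuously on $\tau$ (with $\delta(\tau)\to 0$ as $\tau\to 0^+$ and $\delta(1)>0$), and noting that $\tau\mapsto D(\tau,1)$ is continuous with $D(0,1)=\diam(J)>0$ and $D(1,1)=0$, an intermediate value argument applied to $\delta(\tau)-D(\tau,1)$ produces some $\tau^*\in(0,1)$ with $\delta(\tau^*)=D(\tau^*,1)$. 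Appending $\varphi([\tau^*,1])$ to the $n$-equal splitting of $\varphi([0,\tau^*])$ gives the desired $(n+1)$-equal decomposition of $J$.

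The main obstacle is justifying the continuous selection of $\delta(\tau)$, since equal-diameter splittings of $\varphi([0,\tau])$ need not be unique. I would handle this via a standard compactness argument: limits of cut-point tuples realizing equal-diameter splittings are themselves equal-diameter splittings, by continuity of $D$, and imposing a canonical tie-breaking rule (for instance, taking the lexicographically smallest tuple of cut points, or the smallest achievable common diameter) produces a continuous $\delta$. Alternatively, the whole selection issue can be sidestepped by running a Poincar\'e--Miranda / degree-theoretic argument on the simplex of ordered cut-point tuples for the continuous map $(t_1,\ldots,t_n)\mapsto(D(t_{i-1},t_i))_i$: the sign patterns on the boundary faces (where some $F_i=0$) force a zero of the associated difference map, directly yielding the desired equal-diameter partition.
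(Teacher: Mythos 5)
Your lower-bound argument is correct and matches the paper's: both observe that $\diam(J)\le n\Delta$ follows from the triangle inequality once the decomposition exists, and your chaining estimate supplies the detail the paper leaves implicit. For the existence, however, the paper does not actually prove it: it cites \cite[Lemma~2.2]{Me11} and \cite[Lemma~2]{Kul} (Kulpa's ``sandwich type theorems'', which are Poincar\'e--Miranda--type results on simplices). So only the second of your two routes can be meaningfully compared with the cited literature, and that route — a Poincar\'e--Miranda/degree/KKM argument on the simplex of cut-point tuples — is indeed the right idea and is in the spirit of \cite{Kul}. A particularly clean way to run it is via the Knaster--Kuratowski--Mazurkiewicz lemma: reparametrize by increments $s_i=t_i-t_{i-1}$ on the standard simplex, set $A_i=\{\mathbf{s}:D(t_{i-1},t_i)=\max_j D(t_{j-1},t_j)\}$, check that the $i$-th vertex lies in $A_i$ and that each face $\{s_j=0 \text{ for } j\notin S\}$ is covered by $\bigcup_{i\in S}A_i$ (since the diameters $D(t_{j-1},t_j)$ vanish for $j\notin S$ and not all can vanish), and conclude that $\bigcap A_i\neq\emptyset$.

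Your first route, the induction with a continuously chosen $\delta(\tau)$, has a genuine gap that your proposed fixes do not close. The compactness argument you sketch (limits of minimizing cut-point tuples for $[0,\tau_k]$ are minimizing tuples for $[0,\tau]$) establishes \emph{lower} semicontinuity of $\tau\mapsto\delta(\tau)$, where $\delta(\tau)$ is the smallest achievable common diameter, but it gives no control in the other direction: there is no mechanism forcing a near-minimal splitting of $[0,\tau']$ to exist with common diameter close to $\delta(\tau)$ when $\tau'$ is close to $\tau$, because the set of equal-diameter splittings need not vary continuously (and need not even be connected). Lower semicontinuity alone does not support the intermediate value argument on $\delta(\tau)-D(\tau,1)$ — a lower semicontinuous function can jump over zero. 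The lexicographic tie-break has the same defect. So, as written, route (a) is incomplete, while route (b) is sound but would need to be fleshed out along the lines above.
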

  More explicitly, decomposing  $J$ into $n$ non-overlapping
  subarcs means that  we can find arcs $I_1, \dots, I_n\sub J$ with pairwise disjoint interiors such  
  $J=I_1\cup \dots \cup I_n $.  

\begin{proof} The existence of a decomposition of $J$ into $n$
  non-overlapping subarcs of equal diameter is proved in 
\cite[Lemma~2.2]{Me11} (see also \cite[Lemma~2]{Kul} for a related statement in greater generality). If we denote this diameter by $\Delta>0$, then we must have $\diam(J)\le n\Delta$ as follows from the triangle inequality. 
\end{proof}

 We now summarize some simple facts about  trees. There is a rich literature on the underlying topological spaces, usually called dendrites. We refer to \cite[Chapter V]{Wh}, \cite[Section \S 51 VI]{Ku68},  \cite[Chapter~X]{Na}, and the references in these sources for more on the 
subject. 

By definition, a metrizable topological space $X$ is called a {\em dendrite} if $X$  is a {\em Peano continuum} (i.e., it is compact, connected, and locally connected), and $X$  does not contain any Jordan curve (i.e., a homeo\-morphic image of the unit circle). A dendrite is called {\em non-degenerate} if it contains more than one point. The following statement reconciles our notion of a metric tree  with the notion of a  dendrite. 

\begin{proposition}
  \label{prop:tr=den}
  Let $\qT$ be a metric space. Then $\qT$ is a tree if and only
  if $\qT$ is a non-degenerate dendrite.
\end{proposition}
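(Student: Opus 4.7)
The proof reduces to matching up the shared conditions and then isolating the content: the equivalence between \emph{uniqueness of connecting arcs} and \emph{absence of Jordan curves}, given the rest. The conditions ``non-degenerate'' and ``more than one point'' agree tautologically, and ``compact, connected, locally connected'' (hidden in ``Peano continuum'' on one side and imposed directly on the other) also agree. So the task is to verify, assuming the common topological conditions, that the remaining clauses are equivalent.

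For the forward implication, suppose $\qT$ is a tree in the sense of the paper, and suppose for contradiction that $\qT$ contains a Jordan curve $C$, i.e.\ a homeomorphic image of $\Sph^1$. Pick two distinct points $x,y\in C$. Then $C\setminus\{x,y\}$ has exactly two components, whose closures are two arcs in $\qT$ from $x$ to $y$. These two arcs are distinct (they intersect only in $\{x,y\}$), contradicting uniqueness of the connecting arc $[x,y]$.

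For the reverse implication, assume $\qT$ is a non-degenerate dendrite. First, existence of a connecting arc between any two points $x,y\in\qT$ follows from the classical fact that a Peano continuum is arcwise connected (a consequence of the Hahn--Mazurkiewicz theorem, see \cite[Chapter~X]{Na} or \cite[\S 51]{Ku68}). For uniqueness, I would invoke the standard topological lemma that the union of two distinct arcs with common endpoints $x,y$ contains a Jordan curve. Sketch: letting $J_1,J_2$ be two distinct arcs from $x$ to $y$, there exists a point of $J_1$ not in $J_2$; parametrizing $J_1$ from $x$ to $y$, one locates the ``last'' point $a$ on $J_1$ before this excursion that also lies on $J_2$ and the ``first'' point $b$ on $J_1$ after it that also lies on $J_2$; then the subarc of $J_1$ from $a$ to $b$ together with the subarc of $J_2$ between $a$ and $b$ form a Jordan curve in $\qT$, contradicting the dendrite assumption. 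Hence $[x,y]$ is unique.

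The main obstacle is not really difficult but is the one step that requires genuine topological work, namely the ``union of two arcs with common endpoints contains a Jordan curve'' lemma used in the uniqueness argument. The delicate point is producing the crossover points $a,b$ rigorously; this is handled cleanly in $J_1\cap J_2$, which is closed in $J_1$, by taking infima/suprema along the parametrization of $J_1$ past a point where $J_1$ and $J_2$ diverge. Both this lemma and the Hahn--Mazurkiewicz arcwise connectivity statement are in the references cited in the excerpt (\cite[Chapter V]{Wh}, \cite[\S 51 VI]{Ku68}, \cite[Chapter~X]{Na}), so one can quote rather than redevelop them, keeping the proof short.
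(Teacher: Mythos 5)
Your proof is correct and follows essentially the same route as the paper: the forward direction cuts a hypothetical Jordan curve into two arcs with common endpoints to contradict uniqueness, and the backward direction combines arcwise connectedness of Peano continua with the lemma that two distinct arcs with common endpoints must contain a Jordan curve. The only difference is that you spell out the crossover argument for that lemma, whereas the paper simply asserts it is easy to see.
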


\begin{proof}
  ``$\Rightarrow$''
  If $\qT$ is a tree, then it is a Peano continuum and contains more than one point. Moreover, $\qT$ cannot contain a Jordan curve $J$. Indeed, if $\qT$ contains 
the Jordan curve $J$, then
any two distinct points $x,y\in J$ can  be connected by at least 
two distinct arcs in $\qT$, namely the two subarcs of $J$ with endpoints 
$x$ and $y$. This is impossible,  because $\qT$ is a tree. It follows that $\qT$ is a non-degenerate dendrite.

\smallskip
``$\Leftarrow$''
Conversely, suppose $\qT$ is a non-degenerate dendrite. Since $\qT$ is a Peano continuum, it  is arc-connected, i.e., for any two distinct points  $x,y\in \qT$ there exists an arc $\alpha \sub \qT$ with endpoints $x$ and $y$ (see \cite [Theorem 8.23]{Na}). This arc is unique, because if there exists  an arc $\beta\sub \qT$ with $\beta\ne \alpha$ and endpoints $x$ and $y$, then it is easy to see that $\alpha \cup \beta\sub \qT$ contains a Jordan curve. 
This is impossible, because $\qT$ is a dendrite. It follows that $\qT$ is indeed a tree. 
\end{proof} 

Let $\qT$ be a tree. Then for all points $x,y\in \qT$ with $x\ne y$, there 
 exists a unique arc in $\qT$  joining $x$ and $y$, i.e., it has the endpoints 
 $x$ and $y$. We use the notation $[x,y]$  for this unique arc. 
 It is convenient to allow $x=y$ here. Then $[x,y]$ denotes a {\em degenerate} arc consisting  only of the point $x=y$. 
 Sometimes we want to remove one or both endpoints from the arc $[x,y]$. Accordingly, we define
  \begin{equation*}
  (x,y] \coloneqq [x,y]\setminus\{x\}, 
  \quad 
  [x,y) \coloneqq [x,y]\setminus\{y\},
  \quad
  (x,y) \coloneqq [x,y]\setminus\{x,y\}.  
\end{equation*}  
If $\ga$ is the image of any path in $\qT$ joining $x$ and $y$,
then necessarily $[x,y]\sub \ga$.
  
A subset $X$ of a tree $(\qT,d)$ is called a {\em subtree} of
$\qT$ if $X$ equipped with the restriction of the metric $d$ is
also a tree.  One can show that $X\sub \qT$ is a subtree of $\qT$
if and only if $X$ contains at least two points and is closed and
connected. See \cite[Lemma~3.3]{BT18} for a simple direct
argument; to justify this, one can also invoke
Proposition~\ref{prop:tr=den} and the fact that a closed and
connected subset of a dendrite is a dendrite (see \cite[Corollary
10.6] {Na}). If $X$ is a subtree of $\qT$, then $[x,y]\sub X$ for
all $x,y\in X$.

\begin{lemma}
  \label{lem:top_T}
  Let $(\qT,d)$ be a tree and  $V\subset \qT$ be a finite set. Then
  the following statements are true:

  \begin{enumerate}
    \item 
    \label{item:top_T1}
    Two points $x,y\in \qT\setminus V$ lie in the same
    component of $\qT\setminus V$ if and only if $[x,y]\cap
    V=\emptyset$.  
    \item 
    \label{item:top_T2} If $U$ is a component of $\qT\setminus V$, then $U$ is an open set and 
    $\overline{U}$ is a subtree of $\qT$ with  $\partial \overline U\sub \partial U\sub V$. 
  \item
    \label{item:top_T3}
    If $U$ and $W$ and  are distinct components of $\qT\setminus
    V$, then $\overline{U}$ and $\overline{W}$ 
    have at most one point in common. Such a common point belongs to $V$, and is a boundary point of both  $\overline{U}$ and $\overline{W}$. 
  \end{enumerate}
\end{lemma}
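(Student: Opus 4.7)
The plan is to use two standard facts repeatedly: first, since $\qT$ is a Peano continuum and $V$ is a finite (hence closed) set, the open set $\qT\setminus V$ is locally connected, so each of its components is open; second, in a tree any two points are joined by a unique arc, so any arc $\alpha$ joining $x$ and $y$ inside a subset $A\subset \qT$ must coincide with $[x,y]$, forcing $[x,y]\subset A$.

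For \ref{item:top_T1}, the ``if'' direction is immediate: $[x,y]\subset \qT\setminus V$ is connected and contains both points. For the converse, if $x,y$ lie in the same component $U$, then $U$ is an open subset of the Peano continuum $\qT$, hence arc-connected; the arc joining $x$ to $y$ in $U$ must be $[x,y]$ by uniqueness, so $[x,y]\subset U\subset\qT\setminus V$. For \ref{item:top_T2}, openness of $U$ was already noted. Non-degeneracy of $U$ (and hence of $\overline{U}$) follows because a non-degenerate dendrite has no isolated points, so the open set $U\ne\emptyset$ cannot be a singleton. Since $\overline{U}$ is closed and connected (as the closure of a connected set) and contains at least two points, the characterization of subtrees recalled just before the lemma shows that $\overline{U}$ is a subtree of $\qT$. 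The key inclusion $\partial U\subset V$ is proved by contradiction: if $p\in\partial U=\overline{U}\setminus U$ were not in $V$, then $p$ would belong to some component $W$ of $\qT\setminus V$; but $W$ is open and $p\in\overline{U}$, so $W\cap U\ne\emptyset$, which forces $W=U$ and contradicts $p\notin U$. The refinement $\partial\overline{U}\subset\partial U$ is then automatic from $U\subset\inte(\overline{U})$.

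For \ref{item:top_T3}, suppose distinct components $U\ne W$ have two distinct common boundary points $p,q\in\overline{U}\cap\overline{W}$. The argument used for part \ref{item:top_T2} shows $p,q\in V$: they cannot lie in $U$ (as any neighborhood would then meet $W$, contradicting $U\cap W=\emptyset$), and similarly for $W$. Since $\overline{U}$ and $\overline{W}$ are both subtrees containing $p$ and $q$, each contains the unique arc $[p,q]$ of $\qT$. Now the interior $(p,q)$ is uncountable while $V$ is finite, so there is an interior point $r\in(p,q)\setminus V$. This $r$ lies in some component $U'$ of $\qT\setminus V$, and since $r\in\overline{U}$ the openness of $U'$ forces $U'\cap U\ne\emptyset$, hence $U'=U$; by symmetry $U'=W$, contradicting $U\ne W$. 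Therefore $\overline{U}\cap\overline{W}$ contains at most one point, and if it contains a point $p$, the argument of part \ref{item:top_T2} shows $p\in V$ and $p\in\partial U\cap\partial W$. The main obstacle is conceptual rather than computational: one has to keep straight the interplay between ``component of $\qT\setminus V$'' and ``subtree of $\qT$,'' and the pigeonhole on $(p,q)$ versus the finite set $V$ is the one step where genuine tree geometry (rather than pure topology) is used.
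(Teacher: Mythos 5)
Your proof is correct and follows essentially the same route as the paper: openness of components via local connectedness, arc-connectedness of open connected subsets of a Peano continuum plus uniqueness of arcs for \ref{item:top_T1}, the characterization of subtrees for \ref{item:top_T2}, and the observation that $\overline{U}$ and $\overline{W}$ are subtrees whose intersection would otherwise contain an entire arc for \ref{item:top_T3}. The only minor divergence is in \ref{item:top_T3}: the paper first shows $\overline{U}\cap\overline{W}\subset V$ is finite and then notes that two common points would force the infinite set $[p,q]$ to lie in it, whereas you pick an explicit interior point $r\in(p,q)\setminus V$ and derive the contradiction by identifying the component of $r$ with both $U$ and $W$ — a cosmetically different but equivalent way of cashing in the finiteness of $V$.
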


\begin{proof}
  \ref{item:top_T1}
  Since $V\sub \qT$ is a finite set, it is closed in $\qT$. So
  $\qT\setminus V$ is an open subset of $\qT$. Since $\qT$ is
  locally connected, each component $U$ of $\qT\setminus V$ is
  open.  Moreover, as an open and connected subset of the Peano
  continuum $\qT$, such a component $U$ is arc-connected (see
  \cite[Theorem 8.26] {Na}). So if two points
  $x,y\in \qT\setminus V$ lie in the same component $U$ of
  $\qT\setminus V$, then there exists an arc $\gamma$ in $U$
  joining $x$ and $y$. Then $\gamma=[x,y] \sub U$, and so
  $[x,y]\cap V=\emptyset$.

 Conversely, if $x,y\in \qT\setminus V$ and
 $[x,y]\cap V=\emptyset$, then $[x,y]$ is a connected subset of
 $\qT\setminus V$. Hence there exists a component $U$ of
 $\qT\setminus V$ with $ [x,y]\sub U$; so $x$ and $y$ lie in the
 same component $U$ of $\qT\setminus V$.
    
 \smallskip
 \ref{item:top_T2}
 If $U$ is a component of $\qT\setminus V$, then $U$ is an open
 set (as we have seen in the proof of \ref{item:top_T1}) and
 $\overline U$ is a subtree of $\qT$ (as follows from the
 characterization of subtrees discussed before the lemma).
  
 The inclusion $\partial \overline U\sub \partial U$ is true for all sets $U\sub \qT$. It remains to show $\partial U\sub V$. Indeed, if $x\in \partial U$, then  $x$ cannot belong to $U$ (since $U$ is open) or any other component $W$ of $\qT\setminus V$ (because otherwise $U\cap  W\ne \emptyset$); so $x$ lies in the complement of $\qT\setminus V$ in $\qT$, i.e., $x\in V$.

\smallskip
 \ref{item:top_T3} Suppose $U$ and $W$ are distinct
  components of $\qT\setminus V$. 
  Since $U$ and $W$ are disjoint open subsets of $\qT$ by \ref{item:top_T2}, no interior point of  $\overline U$ can belong to
  $\overline W$, and no interior point of $\overline W$ can belong to $\overline U$. 
   Hence $$\overline U\cap \overline W=\partial \overline U\cap \overline W=\partial \overline U\cap \partial \overline W\sub 
   \partial U\cap \partial W\sub V$$ by \ref{item:top_T2}. In particular, $\overline U\cap \overline W$ is a subset of the finite set  $V$,  and any point in $\overline U\cap \overline W$  must be a boundary point of  both  $\overline U$ and  $\overline W$.
   
 Actually,  $\overline U\cap \overline W$ consists of at most one point; otherwise, $\overline U\cap \overline W$  contains two distinct 
  points $x$ and $y$, and hence the infinite set $[x,y]$, because $\overline U$ and 
  $\overline W$ are subtrees of $\qT$. This is impossible, because  the set $\overline U\cap \overline W\sub V$ is finite. 
  \end{proof}


Let  $\qT$ be a tree, $p\in \qT$, and $U$ be  a  component of
 $\qT\setminus\{p\}$. Then $U\ne \qT$ is open, and so
 $\partial U\ne \emptyset$, because $\qT$ is connected. So by Lemma~\ref{lem:top_T}~\ref{item:top_T2} we have 
 $ \emptyset\ne \partial U \sub \{p\}$. Hence   $\partial U=\{p\}$ and so 
 $\overline U=U\cup \{p\}$. Then  $B\coloneqq \overline U=U \cup\{p\}$ is a subtree of $\qT$, called 
 a {\em branch} of  $p$ (in $\qT$).

The components $U$ of any open subset $W$ of a tree 
$\qT$ form a {\em null sequence} in the following sense: 
for each $\eps>0$ there are only finitely many such components
$U$ with $\diam(U)\ge \eps$.
In particular, the number of components of $W$
is finite or countably infinite. This follows from a more general fact about open subsets of {\em hereditarily locally connected} metric continua; see \cite[p.~90, Corollary (2.2)]{Wh} or 
\cite[p.~269, Theorem 3] {Ku68}. Note that we can apply this result
by Proposition~\ref{prop:tr=den} and  because every dendrite  is hereditarily locally connected (this is explicitly stated in \cite[Corollary 10.5]{Na} and follows from the fact,
 mentioned above,  that every subcontinuum of a dendrite is a dendrite). 
 
In particular, each point  $p$ in a tree $\qT$ can have at most countably many distinct 
complementary components $U$ and hence there are only countably
many  
distinct branches $B$ of  $p$. 
Only finitely many of these branches can have a diameter  exceeding a given positive number (for a direct proof of these facts see also \cite[Section~3]{BT18}). This implies that we can label the branches $B_n$ of $p$ by numbers $n=1,2,3,\dots$ so that 
$$ \diam(B_1)\ge \diam(B_2)\ge \diam(B_3)\ge \dots\, . $$

If there are precisely two such branches, then we call $p$ a {\em
  double point} of $\qT$ and define 
\begin{equation}\label{eq:DT}
D_\qT(p)=\diam(B_2).
\end{equation}
So $D_\qT(p)$ is the diameter of the smallest branch of a double point $p$.  

If there are at least three branches of $p$, then $p$ is called a
{\em branch point} of $\qT$. In this case, we set
\begin{equation}\label{eq:HT}
  H_{\qT}(p)=\diam(B_3). 
\end{equation} 
So $H_\qT(p)$ is the diameter of the third largest branch of  $p$.

The following statement gives a criterion how to detect branch
points.  

\begin{lemma}
  \label{lem:tripcrit}
  Let $(\qT,d)$ be a tree, $b,x_1,x_2,x_3\in \qT$ with
  $b\ne x_1, x_2, x_3$ and suppose that the sets $[x_1,b)$,
  $[x_2,b)$, $[x_3,b)$ are pairwise disjoint.  Then the points
  $x_1,x_2,x_3$ lie in different components of
  $\qT\setminus\{b\}$ and $b$ is a branch point of $\qT$.
\end{lemma}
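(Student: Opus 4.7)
The plan is to reduce everything to Lemma~\ref{lem:top_T}~\ref{item:top_T1} applied with $V=\{b\}$: two points $x,y\in \qT\setminus \{b\}$ lie in distinct components of $\qT\setminus \{b\}$ precisely when $b\in [x,y]$. So the entire statement will follow once I show that $b\in [x_i,x_j]$ for every pair $i\ne j$ in $\{1,2,3\}$, because then $x_1,x_2,x_3$ lie in three pairwise distinct components of $\qT\setminus \{b\}$, and the closures of these three components are three distinct branches of $b$, making $b$ a branch point by definition.

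To show that $b\in [x_i,x_j]$ for fixed $i\ne j$, I would argue as follows. The arcs $\alpha\coloneqq [x_i,b]$ and $\beta\coloneqq [x_j,b]$ have the common endpoint $b$; by the hypothesis that $[x_i,b)$ and $[x_j,b)$ are disjoint, this is in fact their only point of intersection. A standard topological argument (e.g.\ parametrizing $\alpha$ on $[0,1]$ and $\beta$ on $[1,2]$ with $b$ corresponding to the parameter $1$) shows that $\alpha\cup\beta$ is then an arc from $x_i$ to $x_j$. Since $\qT$ is a tree, arcs joining two given points are unique, so $\alpha\cup\beta=[x_i,x_j]$ and therefore $b\in [x_i,x_j]$.

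Applying this for all three pairs $(i,j)\in \{(1,2),(1,3),(2,3)\}$ and invoking Lemma~\ref{lem:top_T}~\ref{item:top_T1} shows that $x_1,x_2,x_3$ lie in three pairwise distinct components $U_1,U_2,U_3$ of $\qT\setminus \{b\}$. As explained in the excerpt, $\overline{U_k}=U_k\cup\{b\}$ is a branch of $b$, and the three closures $\overline{U_1},\overline{U_2},\overline{U_3}$ are distinct because the $U_k$ are. Hence $b$ admits at least three branches and is thus a branch point of $\qT$.

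I do not expect any serious obstacle here; the only step requiring a moment's thought is the concatenation claim that $\alpha\cup\beta$ is an arc when the two arcs meet exactly at a shared endpoint, but this is a routine consequence of the tree structure and can simply be cited as a standard fact about arcs in dendrites (cf.\ the characterization of subtrees just above Lemma~\ref{lem:top_T}, or the references to \cite{Na,Wh} given in the excerpt).
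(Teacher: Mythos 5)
Your argument is correct and is essentially the same as the paper's: both show $b\in[x_i,x_j]$ by concatenating the two arcs $[x_i,b]$ and $[b,x_j]$ that meet only at $b$, then invoke Lemma~\ref{lem:top_T}~\ref{item:top_T1} to place the three points in distinct components of $\qT\setminus\{b\}$. The paper spells out the concatenation for the pair $(x_1,x_2)$ and treats the remaining pairs by ``a similar argument,'' whereas you make all three pairs explicit; this is only a presentational difference.
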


\begin{proof}
  This is  \cite[Lemma 3.6]{BT18}. For the reader's convenience we reproduce the argument.  
  The arcs $[x_1,b]$ and $[x_2,b]=[b, x_2]$ have only the point $b$ in common. So   their union  $[x_1,b]\cup [b, x_2]$ 
is an arc and this arc must be equal to $[x_1,x_2]$. Hence $b\in [x_1,x_2]$ which by Lemma~\ref{lem:top_T}~\ref{item:top_T1} implies that $x_1$ and $x_2$ lie in different components of $\qT\ba\{b\}$. A similar argument shows that $x_3$ must be contained in a component of 
$\qT\ba\{b\}$ different from the components containing $x_1$ and  $x_2$. In particular, $\qT\ba\{b\}$ has at least three components and so $b$ is a branch point of $\qT$. The statement follows. 
\end{proof} 

The tree $(\qT,d)$  is of $K$-bounded
turning with $K\ge 1$ (as defined in the introduction) if and
only if
\begin{equation*}
  \diam\, [x,y]\le Kd(x,y)
\end{equation*}
for all $x,y\in \qT$. Here and in the following, $\diam\,[x,y]$ instead of $\diam([x,y])$
denotes the diameter of the arc $[x,y]$; we omit the 
parentheses for better readability.  

We define the
\emph{diameter distance} on $\qT$ by 
\begin{equation}
  \label{eq:def_dia}
  \dia(x,y) := \diam\, [x,y]
\end{equation}
for  $x,y\in \qT$. We record some properties of this distance function.

\begin{lemma} Let $(\qT, d)$ be a metric tree. Then the following statements are true: 
  \label{lem:prop_dia}
  \mbox{}
  \begin{enumerate}
  \item
    \label{item:dd_metric}
    $\dia$ is a metric on $\qT$.
  \item 
    \label{item:dd_diam}
    For each arc $J\subset \qT$ we have 
    \begin{equation*}
      \diam_{\dia}(J) = \diam (J), 
    \end{equation*}
    where $\diam_{\dia}$ denotes the diameter with respect to $\dia$. 
  \item 
    \label{item:dd_1bt}
    $(\qT,\dia)$ is of $1$-bounded turning. 
    
      \item 
    \label{item:dd_biL}
    $(\qT,d)$ is of $K$-bounded turning for $K\ge 1$ if and only if the identity map
     $\id_{\qT}\colon
    (\qT,d) \to (\qT, \dia)$ is $K$-bi-Lipschitz.

  \end{enumerate}
\end{lemma}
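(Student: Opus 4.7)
The plan is to handle the four parts in order, using each one to streamline the next. For \ref{item:dd_metric}, non-negativity, definiteness (under the degenerate-arc convention, $\dia(x,y) = 0$ iff $[x,y] = \{x\}$ iff $x=y$), and symmetry are immediate from the definition. The triangle inequality requires one extra observation: in a tree, $[x,y] \cup [y,z]$ is a closed connected set containing both $x$ and $z$, and (as already noted in the excerpt) a closed connected subset of a tree with at least two points is itself a subtree, hence contains the unique arc between any two of its points. Thus $[x,z] \subset [x,y] \cup [y,z]$, and since these two arcs share the point $y$, the standard estimate $\diam(A \cup B) \leq \diam A + \diam B$ for overlapping sets gives $\dia(x,z) \leq \dia(x,y) + \dia(y,z)$.

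For \ref{item:dd_diam}, let $J \subset \qT$ be an arc. For any $x, y \in J$, the subarc $[x,y]$ must lie inside $J$ (since $J$ is itself a subtree containing both endpoints), so $\dia(x,y) = \diam[x,y] \leq \diam J$; taking a supremum gives $\diam_\dia J \leq \diam J$. The reverse inequality follows pointwise from $d(x,y) \leq \diam[x,y] = \dia(x,y)$.

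For \ref{item:dd_1bt}, the natural candidate for the bounded-turning set is $E = [x,y]$, whose $\dia$-diameter equals $\dia(x,y)$ by \ref{item:dd_diam}. The real content is to verify that $[x,y]$ is compact and connected in $(\qT, \dia)$, because a priori $\dia$ could induce a strictly finer topology than $d$ on $\qT$. I would parametrize $[x,y]$ by a $d$-homeo\-morphism $\gamma \colon [0,1] \to [x,y]$; for $s \leq t$ in $[0,1]$ the subarc $\gamma([s,t])$ coincides with $[\gamma(s), \gamma(t)]$, so applying \ref{item:dd_diam} to this subarc yields $\dia(\gamma(s),\gamma(t)) = \diam_d \gamma([s,t])$. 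Uniform continuity of $\gamma$ with respect to $d$ then forces continuity of $\gamma$ into $(\qT, \dia)$, so $[x,y]$ is a continuous image of $[0,1]$ in $(\qT, \dia)$ and is therefore compact and connected there. This is the step I expect to be the main obstacle; the rest is formal.

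Finally, \ref{item:dd_biL} drops out immediately. The bound $d(x,y) \leq \diam[x,y] = \dia(x,y)$ is automatic, so one half of the $K$-bi-Lipschitz condition holds for free (using $K \geq 1$). The other half, $\dia(x,y) \leq K d(x,y)$, is literally the reformulation $\diam[x,y] \leq K d(x,y)$ of the $K$-bounded turning property stated just before the lemma, so the two conditions are equivalent.
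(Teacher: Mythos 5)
Your proof is correct and, for parts (i), (ii), and (iv), follows the same route as the paper: $[x,z]\subset[x,y]\cup[y,z]$ for the triangle inequality, the two-sided comparison $d\le\dia$ and $[x,y]\subset J$ for (ii), and the observation that $d\le\dia$ together with the reformulated bounded-turning condition $\diam_d[x,y]\le K\,d(x,y)$ for (iv). The genuine difference is in (iii). The paper disposes of it in one line, writing $\dia(x,y)=\diam_d[x,y]=\diam_\dia[x,y]$ and treating $[x,y]$ as a valid compact connected test set in $(\qT,\dia)$; this silently assumes that $d$ and $\dia$ induce the same topology. You are right to flag this: $\id\colon(\qT,\dia)\to(\qT,d)$ is $1$-Lipschitz since $d\le\dia$, but the reverse direction is the nontrivial one, and at this point one cannot yet invoke the bi-Lipschitz equivalence from (iv), since the lemma does not assume $(\qT,d)$ has bounded turning. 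Your parametrization argument closes the gap cleanly: if $\gamma\colon[0,1]\to[x,y]$ is a $d$-homeomorphism, then $\dia(\gamma(s),\gamma(t))=\diam_d\gamma([s,t])$, and for $|s-t|$ small the uniform $d$-continuity of $\gamma$ bounds $\diam_d\gamma([s,t])$, so $\gamma$ is $\dia$-continuous and $[x,y]$ is $\dia$-compact and $\dia$-connected. This is more careful than the paper's proof at the cost of a few extra lines; the paper's terseness is defensible (topological compatibility of $d$ and $\dia$ on a dendrite is a standard fact, provable also via uniform local connectedness of Peano continua), but your version is the one to keep if you want the argument fully self-contained.
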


\begin{proof}
  This is  \cite[Lemma 2.1]{Me11}, but we will include the simple proof for the convenience of the reader.

  \ref{item:dd_metric} All properties of a metric for
  $\dia$ are immediate except the triangle inequality which follows from the fact that if $x,y,z\in \qT$, then $[x,z]\sub [x,y]\cup [y,z]$.


\smallskip 
\ref{item:dd_diam} For all $x,y\in J$,
we have $d(x,y)\le\dia(x,y)$,  and 
so $\diam (J)\le\diam_{\dia}(J)$. Moreover,  for all $x,y\in J$ we have $[x,y]\sub J$. Hence 
$\dia(x,y)\le\diam (J)$; so $\diam_{\dia} (J) \le\diam (J)$ and the statement follows. 

\smallskip
 \ref{item:dd_1bt} This follows directly from \ref{item:dd_diam}, since
$$\dia(x,y)=\diam\, [x,y]=\diam_{\dia}[x,y]$$ for all
$x,y\in \qT$. 

\smallskip
\ref{item:dd_biL}  If $(\qT,d)$ is of $K$-bounded
turning, then for all
$x,y\in \qT$ we have  
$$
  \dia(x,y)=\diam\, [x,y]\le K d(x,y) \le K \dia(x,y).
$$
Thus the identity map $\id_{\qT}\colon (\qT,d) \to (\qT,\dia)$ is
$K$-bi-Lipschitz.  Conversely, if this map is $K$-bi-Lipschitz, 
then for all $x,y\in \qT$,  
$$
  \diam\, [x,y]=\diam_{\dia}[x,y]=\dia(x,y)\le K d(x,y).
$$
Therefore, $(\qT, d)$ is of $K$-bounded turning.
\end{proof}

We say a metric $d$ on a metric tree $\qT$ is a {\em diameter
  metric} if $d(x,y)=\diam\,[x,y]$ for all
$x,y\in \qT$. In this case, $d=\dia$, where $\dia$ is defined as
in \eqref{eq:def_dia}.

Suppose $(\qT,d)$ is a qc-tree, i.e., a tree that is doubling and
of bounded turning.  Then the previous lemma implies that
$(\qT, \dia)$ is bi-Lipschitz equivalent, and in particular
quasisymmetrically equivalent, to $(\qT, d)$. Moreover,
$(\qT, \dia)$ is of $1$-bounded turning and also doubling, since
the latter condition is invariant under bi-Lipschitz equivalence;
so $(\qT, \dia)$ is also a qc-tree. This implies that in order to
prove Theorems~\ref{main} and~\ref{thm:qtree_confdim1}, we are
reduced to the case that the qc-tree in question carries a
diameter metric. This reduction makes the proofs somewhat easier,
but we still face major problems, because there is no obvious way
to turn a diameter metric into a geodesic metric by a
quasisymmetry.

For the rest of the paper we will assume that $(\qT, d)$ is a 
qc-tree that is equipped with a diameter metric $d$. Nothing essential changes if we rescale the metric. So we may also assume that 
$\diam(\qT)=1$.  We will denote the doubling constant of $\qT$ by $N$ throughout the paper.

\section{Sun and shadow} 
\label{sec:place_sun}

In this section we will prove a statement, 
Proposition~\ref{prop:branchaway},  that will allow us to find
double points in our given qc-tree $\qT$ that stay away from the branch
points of $\qT$ in a geometrically controlled manner. In the
formulation of the proposition, we use the function defined in \eqref{eq:HT}.

\begin{proposition}
  \label{prop:branchaway}
  There exists a constant $\ga=\ga(N)>0$ only depending on the
  doubling constant $N$ of $\qT$ with the following property: if
  $\Delta>0$ and $J\sub \qT$ is an arc with $\diam(J)\ge \Delta$,
  then there exists a double point $x\in J$ of $\qT$ such that
  \begin{equation*}
    d(x,b)\ge \ga \min\{H_{\qT}(b), \Delta\}
  \end{equation*}
  for all branch points $b\in \qT$.
\end{proposition}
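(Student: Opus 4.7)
I would construct $x$ as the unique point in the intersection $\bigcap_k \overline{I_k}$ of a nested sequence of closed subarcs $J = I_0 \supset I_1 \supset I_2 \supset \cdots$ whose diameters shrink geometrically. Each $I_k$ is to be chosen to avoid the shadows $B(b, \ga H_\qT(b))$ of all branch points $b$ with $H_\qT(b)$ above a threshold $s_k$ tending to $0$. Arranging $\overline{I_1} \sub \inte(J)$ forces the limit $x$ to be interior to $J$. By construction, $d(x,b) \ge \ga H_\qT(b) \ge \ga \min\{H_\qT(b), \Delta\}$ for every branch point $b$, which excludes $x$ being a branch point; being interior to $J$, $x$ is not a leaf of $\qT$ either, so $x$ is a double point, as required.

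The enabling tool is a \emph{linear} counting estimate: for any subarc $A \sub J$ of diameter $D$ and any $r > 0$, the number of branch points $b$ on $A$ with $H_\qT(b) \ge r$ is at most $C(N)(D/r + 1)$. To establish this, subdivide $A$ into $\lceil D/r \rceil$ sub-subarcs of diameter $\le r$ using Lemma~\ref{lem:divide_A_N}. On each sub-subarc $A'$, each branch point $b$ with $H_\qT(b) \ge r$ carries a ``spike'' $q_b \in B_3(b)$ with $d(q_b, b) = r/2$, obtained by an intermediate-value argument for the diameter function along a subarc of $B_3(b)$ starting at $b$. For distinct such $b$ on $A'$, the spikes lie in disjoint components of $\qT \setminus J$ (since in a tree each component of the complement of a subarc attaches at a single point), hence $d(q_b, q_{b'}) \ge r/2$. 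The spikes thus form an $(r/2)$-separated set in a ball of radius $3r/2$, and doubling bounds the count by a constant $C_0(N)$.

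Branch points $b \notin J$ whose shadow meets $J$ are absorbed by the observation that the shadow of $b$ lies inside the ball around $\pi(b) \in J$ of radius $\ga H_\qT(b)$, and $\pi(b)$ is necessarily a branch point of $\qT$ (otherwise removing it could not disconnect $b$ from $J$). Gathering the contributions of all $b$ with a common projection $p = \pi(b)$ gives a single effective shadow at $p$ with radius $\ga$ times the diameter of the largest off-$J$ branch at $p$, and these effective centers satisfy the same spike-based separation, so the counting lemma covers them uniformly.

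Given the linear counting lemma, the iteration runs as follows. At step $k$ the newly relevant shadows are those of scale $\asymp d_{k-1} = \diam I_{k-1}$, whose number is bounded by $C_1(N)$. Their total diameter in $I_{k-1}$ is at most $d_{k-1}/4$ once $\ga \le \ga(N)$ is small enough, so the unshadowed complement in $I_{k-1}$ contains a component of diameter $\ge \alpha d_{k-1}$ for a fixed $\alpha = \alpha(N) \in (0,1)$. I take this component, trimmed to lie strictly inside $I_{k-1}$, as $I_k$. The hardest part is calibrating $\alpha$, $\ga$, and the scale thresholds $s_k$ so that at every step the counting lemma produces a uniformly bounded number of shadows whose total diameter is a controlled fraction of $d_{k-1}$; this amounts to arranging that the per-step scale reduction and arc shrinkage are mutually compatible, with both depending only on $N$.
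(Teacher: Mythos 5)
Your overall strategy coincides with the paper's: nest subarcs that dodge shadows around branch points, control the shadow count per scale via a spike-plus-doubling argument, and reduce branch points off $J$ to effective shadows on $J$. The paper packages the nesting into Lemma~\ref{lem:Sonne}, applied to $S(p)=\min\{H_{\qT}(p),\Delta\}$ at interior branch points and $S=\Delta$ at the endpoints of $J$, and performs the off-$J$ reduction \emph{after} $x$ is chosen; you do the nesting and the reduction in one pass, and your linear counting estimate is a correct repackaging of the paper's per-arc bound. Nevertheless, two steps in your proposal are genuinely broken.

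First, you shadow with radius $\ga H_{\qT}(b)$ and claim $d(x,b)\ge\ga H_{\qT}(b)$. This is impossible in general: if $J$ is a short arc through a branch point $b$ with two large branches, then $H_{\qT}(b)$ may exceed $\diam(J)$ by an arbitrary factor, while $d(x,b)\le\diam(J)$ for every $x\in J$. The cap $\min\{H_{\qT}(b),\Delta\}$ in the statement is not cosmetic: without it the first shadows you consider can already cover all of $J$ and the iteration cannot start. Second, your reduction of off-$J$ branch points relies on the assertion that $\pi(b)$ is always a branch point of $\qT$. That holds when $\pi(b)\in\inte(J)$, but it fails when $\pi(b)$ is an endpoint of $J$: an endpoint of $J$ may be a double point or a leaf of $\qT$ even though a branch point $b$ with large $H_{\qT}(b)$ sits just beyond it. Such a $b$ produces no branch point on $J$, hence no shadow in your scheme, and the nested arcs can collapse arbitrarily close to that endpoint. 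The requirement $\overline{I_1}\subset\inte(J)$ gives only a strict, not quantitative, inclusion, so you obtain no lower bound on $\dist(x,\partial J)$ and the desired inequality then fails for that $b$. The paper avoids this by explicitly setting $S(u)=S(v)=\Delta$ at the endpoints. Both defects are repairable by adopting the paper's choice of $S$, but as written your proof does not close.
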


To prove this statement, we require two  auxiliary facts. 

\begin{lemma}
  \label{lem:arc-decomp}
  Let $(J,d)$ be a metric arc equipped with a diameter metric $d$,
  $J'\sub J$ be an arc, and $A\sub J$ be a set with $\#(A\cap J')\le M$, where $M\in \N$. 
  Then there exists an arc $I\sub J'$ such that
  \begin{equation*}
    \diam(I)
    =
    \tfrac1{6M}\diam(J')
    \text{ and }
    \dist(I, A\cup \partial J')
    \geq
    \tfrac1{6M}\diam(J'). 
  \end{equation*}
\end{lemma}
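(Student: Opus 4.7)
The plan is to exploit that the ``bad set'' $B := (A\cap J')\cup \partial J'$ has at most $M+2$ points, and thus partitions $J'$ into at most $M+1$ closed sub-arcs with pairwise disjoint interiors. Writing the points of $B$ in arc order along $J'$ as $q_0, q_1, \dots, q_k$ (with $q_0, q_k \in \partial J'$), the triangle inequality gives
\[
\sum_{j=1}^{k} d(q_{j-1}, q_j)\;\ge\; d(q_0, q_k)\;=\;\diam(J')\;=:\;L,
\]
so at least one sub-arc, say $[p,q]$ with $p,q \in B$ and no other point of $B$ in its interior, satisfies $D := d(p,q) \ge L/(M+1)$. The aim is to carve $I$ out of this ``clean'' sub-arc using the intermediate value theorem.

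Since $d$ is a diameter metric, $\phi(z) := d(p,z) = \diam[p,z]$ is a continuous ($1$-Lipschitz) function on $[p,q]$ with $\phi(p) = 0$ and $\phi(q) = D$; as $[p,q]$ is connected, IVT produces $x \in [p,q]$ with $d(p,x) = L/(6M)$. For $M \ge 1$ one has $L/(6M) < L/(M+1) \le D$, so $x \ne q$. Applying IVT once more to $\psi(z) := d(x,z)$ on $[x,q]$, and using the triangle-inequality bound $d(x,q) \ge D - d(p,x) \ge L/(M+1) - L/(6M) > L/(6M)$ (valid for $M \ge 1$), we obtain $y \in [x,q]$ with $d(x,y) = L/(6M)$. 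Setting $I := [x,y]$, the $1$-bounded turning of the diameter metric (Lemma~\ref{lem:prop_dia}~\ref{item:dd_1bt}) gives $\diam(I) = d(x,y) = L/(6M) = \tfrac{1}{6M}\diam(J')$, as required.

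For the separation bound, the key observation is a monotonicity property of the diameter metric: if $u, v, w$ lie in this arc order along $J'$, then $[u,w] \supset [v,w]$, so $d(u,w) = \diam[u,w] \ge \diam[v,w] = d(v,w)$. Since $(p,q)$ contains no point of $B$, every $b \in B$ lies on the $p$-side of $x$ or on the $q$-side of $y$ along $J'$. In the first case, for any $z \in [x,y]$ the arc $[b,z]$ contains $[p,x]$, whence $d(b,z) \ge d(p,x) = L/(6M)$; the second case is symmetric, using the triangle-inequality bound $d(q,y) \ge d(x,q) - d(x,y) \ge L/(M+1) - 2L/(6M) \ge L/(6M)$ (valid for $M \ge 1$). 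Hence $\dist(I,B) \ge L/(6M)$. The main obstacle throughout is that the diameter metric is \emph{not} additive along sub-arcs (already for a circular arc the triangle inequality is strict), so an ``arc-length'' reparameterization is unavailable; the argument succeeds because the weaker monotonicity $d(u,w) \ge d(v,w)$, combined with IVT applied to the Lipschitz functions $\phi$ and $\psi$, is enough to locate $I$ with both the prescribed diameter and the required separation.
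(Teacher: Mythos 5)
Your proof is correct. It follows the same high-level strategy as the paper's — isolate a sub-arc of $J'$ free of $A$, extract an interior piece of the right diameter, and use the diameter-metric property to certify separation — but the execution differs in a meaningful way at the first step. The paper invokes Lemma~\ref{lem:divide_A_N} to chop $J'$ into $M+1$ non-overlapping sub-arcs of \emph{equal} diameter $\Delta\ge\frac{1}{M+1}\diam(J')$ and then applies pigeon-hole to find one whose interior misses $A$; you instead cut $J'$ along the (at most $M+2$) points of $B=(A\cap J')\cup\partial J'$ and use the triangle inequality $\sum_j d(q_{j-1},q_j)\ge d(q_0,q_k)=\diam(J')$ to exhibit one clean sub-arc $[p,q]$ with $d(p,q)\ge\diam(J')/(M+1)$. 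This avoids the auxiliary decomposition lemma entirely, which is a genuine simplification. For the extraction and separation steps the ideas are essentially the same: the paper divides the clean piece into thirds and observes that a path from the middle third to $A\cup\partial J'$ must traverse an outer third; you carve $I=[x,y]$ directly by two applications of the intermediate value theorem to the functions $d(p,\cdot)$ and $d(x,\cdot)$, and then certify the separation via the monotonicity $d(u,w)\ge d(v,w)$ of the diameter metric along nested sub-arcs, together with the triangle-inequality bounds $d(p,x)=L/(6M)$ and $d(q,y)\ge L/(M+1)-2L/(6M)\ge L/(6M)$. Your arithmetic checks (each requiring only $M\ge 1$) and the monotonicity observation are all correct, so the argument goes through.
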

The statement is somewhat  technical, because three arcs $I\sub
J'\sub J$ are involved, but in this form the lemma  will be
useful for us  later on.

\begin{proof}[Proof of Lemma~\ref{lem:arc-decomp}]
  The construction that follows is illustrated in
  Figure~\ref{fig:JJpI}.
By Lemma~\ref{lem:divide_A_N} we can decompose 
$J'$ into $M+1$  non-overlapping arcs $J'_1, \dots, J'_{M+1}$ of
equal diameter $\Delta$.
\begin{figure}[h]
  \begin{overpic}[width=12.7cm, tics=10, 
    ]{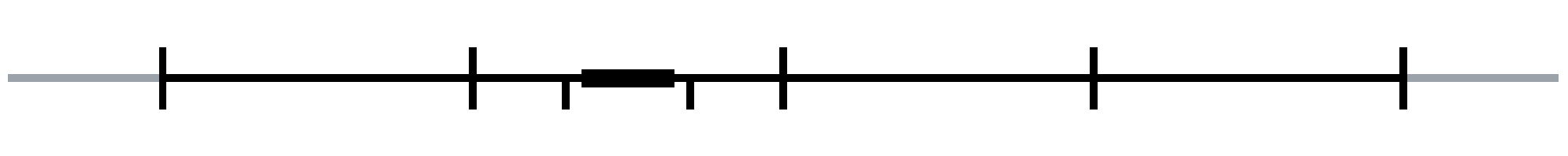}
    \put(18,8){$J'_1$}
    \put(34.5,8){$J''=J'_k$}
    \put(75.5,8){$J'_{M+1}$}
    \put(86,-0.7){$J'$}
    \put(97,-0.7){$J$}
    \put(36.5,-0.5){{$I\!\subset\! I_2$}}
    \put(32,-0.5){{$I_1$}}
    \put(46.6,-0.5){{$I_3$}}
  \end{overpic}
  \caption{The  arcs in the proof of Lemma~\ref{lem:arc-decomp}.}
  \label{fig:JJpI}
\end{figure}

We have 
$$\Delta\ge \tfrac 1{M+1} \diam(J')\ge \tfrac 1{2M} \diam(J').$$
 Since $\#(A\cap J')\le M$ and $J'_1, \dots, J'_{M+1}$  have pairwise disjoint interiors,   by the pigeon-hole principle 
   there exists $k\in \{1, \dots, M+1\}$ such that for $J''\coloneqq J'_k$ we have $\inte(J'')\cap A=\emptyset. $ 
 We subdivide 
 $J''$ into three non-overlapping arcs $I_1, I_2, I_3$ of equal diameter. 
 Then 
 \begin{equation}\label{eq:arcdiambd} \diam(I_i)\ge  \tfrac 13 \diam(J'') \ge  \tfrac 1{6M} \diam(J') \end{equation}
 for $i=1,2,3$.  
 We may assume that $I_1$ contains one endpoint of $J''$,  
 $ I_3$ contains the other endpoint, and $I_2$ is the ``middle" arc in the decomposition of $J''$. It easily follows from \eqref{eq:arcdiambd} and  the intermediate value theorem that there exists an arc $I\sub I_2$ with $ \diam (I)=\tfrac 1{6M} \diam(J')$. 
 
 If $a\in A\cup \partial J'$, then $a\not \in \inte(J'')$. So if we travel from a point 
 $x\in I$ 
 to the point $a$ along $[x,a]\sub J$, we must traverse $I_1$ or $I_3$. Since 
 $d$ is a diameter metric, \eqref{eq:arcdiambd}  implies that 
 \begin{equation*}
   d(x,a)
   =\diam\,[x,a]
   \ge 
   \min\{\diam(I_1),  \diam(I_3)\} \ge 
   \tfrac 1{6M} \diam(J'). 
 \end{equation*}
 Hence $ \dist(I, A\cup \partial J') \ge \tfrac 1{6M}
 \diam(J')$. The statement follows.
\end{proof}

\begin{lemma}[Ein Platz an der Sonne\footnote{Mit f\"{u}nf Mark
    sind Sie dabei!}]
  \label{lem:Sonne}
  Let $(J,d)$ be a metric arc equipped with a diameter metric $d$,
  and $S\: J\ra [0, \diam(J)]$ be a
  function. Suppose that there is a constant $M\in \N$ such that
  for all subarcs $I\subset J$ we have 
  \begin{equation}
    \label{eq:no_large_comp}
    \# \{ p\in I: S(p)\ge \diam(I)\} \le M. 
  \end{equation}
  Then there exists a constant $\sigma=\sigma(M)>0$ and a point
  $x\in J$ such that $d(x,p)\ge \sigma S(p)$ for all $p\in J$.
\end{lemma}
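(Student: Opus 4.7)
The plan is to construct $x$ as the unique point in a nested sequence of closed subarcs $J=J_0\supset J_1\supset J_2\supset\cdots$ of $J$, produced iteratively by Lemma~\ref{lem:arc-decomp} so that each step both shrinks the arc by a fixed factor and pushes it away from the finitely many ``large-shadow'' points at the current scale. Concretely, given $J_n$ I set
$$A_n \coloneqq \{p\in J_n : S(p)\ge\diam(J_n)\};$$
by hypothesis $\#A_n\le M$. Applying Lemma~\ref{lem:arc-decomp} with $J'=J_n$ and $A=A_n$ yields an arc $J_{n+1}\subset J_n$ satisfying
$$\diam(J_{n+1})=\tfrac{1}{6M}\diam(J_n) \quad\text{and}\quad \dist(J_{n+1},A_n\cup\partial J_n)\ge\tfrac{1}{6M}\diam(J_n).$$
Since $\diam(J_n)=(6M)^{-n}\diam(J)\to 0$ and the $J_n$ are nested closed subsets of the compact space $J$, the intersection $\bigcap_n J_n$ consists of a single point $x$.

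To verify the conclusion, fix $p\in J$; the case $S(p)=0$ is trivial, so assume $S(p)>0$. Since $S(p)\le\diam(J)=\diam(J_0)$, there is a unique $n\ge 0$ with $\diam(J_{n+1})<S(p)\le\diam(J_n)$. If $p\in J_{n+1}$, then $S(p)>\diam(J_{n+1})$ places $p$ in $A_{n+1}$, and since $x\in J_{n+2}$ we obtain $d(x,p)\ge\dist(J_{n+2},A_{n+1})\ge\tfrac{1}{6M}\diam(J_{n+1})$. Otherwise, let $m$ be the largest index with $p\in J_m$, so $0\le m\le n$ and $p\in J_m\setminus J_{m+1}$; since $J$ is homeomorphic to an interval, the sub-arc $[x,p]\subset J$ must exit $J_{m+1}$ through one of its endpoints $c$, so that $[x,c]\subset[x,p]$. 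Using that $d$ is a diameter metric and $x\in J_{m+2}$,
$$d(x,p)=\diam[x,p]\ge\diam[x,c]=d(x,c)\ge\dist(J_{m+2},\partial J_{m+1})\ge\tfrac{1}{6M}\diam(J_{m+1})\ge\tfrac{1}{6M}\diam(J_{n+1}).$$
In both cases $\diam(J_{n+1})\ge S(p)/(6M)$ (because $S(p)\le\diam(J_n)=6M\diam(J_{n+1})$), so $d(x,p)\ge S(p)/(6M)^2$, and $\sigma\coloneqq 1/(6M)^2$ works.

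The main obstacle is case (ii): one has to parlay the metric information $\dist(J_{m+2},\partial J_{m+1})\ge\tfrac{1}{6M}\diam(J_{m+1})$---which literally only bounds $d(x,\partial J_{m+1})$ from below---into a lower bound for $d(x,p)$ for a point $p$ that lies outside $J_{m+1}$ but might be very close to $\partial J_{m+1}$. This is precisely where the diameter-metric hypothesis on $d$ is essential: the unique arc $[x,p]$ in $J$ passes through the appropriate endpoint $c$ of $J_{m+1}$, and the identity $d(x,p)=\diam[x,p]$ then forces $d(x,p)\ge d(x,c)$. Without this property, $p$ could sit arbitrarily close to $c$ in $d$ while $c$ remains far from $x$, and the argument would break down.
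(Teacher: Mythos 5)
Your proof is correct and reaches the same constant $\sigma = 1/(36M^2)$ by essentially the paper's construction: iterate Lemma~\ref{lem:arc-decomp} to obtain nested arcs $J_n$ shrinking by the factor $1/(6M)$, and take $x$ to be the intersection point. The only substantive difference is that you define $A_n$ \emph{locally}, as $\{p\in J_n : S(p)\ge \diam(J_n)\}$, whereas the paper sets $\lambda := 1/(6M)$ and defines $A_n := \{p\in J : S(p)\ge \lambda^n\}$ \emph{globally} on $J$; the hypothesis still gives $\#(A_n\cap J_n)\le M$, so the inductive construction goes through identically, but the global definition makes the final estimate a one-liner: once $n$ is the smallest index with $p\in A_n$, the inequality $\dist(J_{n+1}, A_n)\ge \lambda^{n+1}$ bounds $d(x,p)$ with no case distinction at all. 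Your local version forces you to treat separately the $p$ that fall out of the nest at some level $m$, for which you then need the extra (correct) observation that the diameter metric lets you push the boundary separation $\dist(J_{m+2},\partial J_{m+1})$ out to the exterior point $p$ via the unique arc $[x,p]$; this is a real use of the diameter-metric hypothesis beyond what Lemma~\ref{lem:arc-decomp} already encodes, and it is avoidable. So: same idea, same lemma, same constant, but the paper's global indexing of the level sets $A_n$ eliminates the case analysis you had to do by hand.
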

  
In other words, the set
$J\setminus \bigcup_{p\in J}B(p, \sigma S(p))$ is non-empty (here we use  the  convention that $B(p,0) =\emptyset$).  If we think of
each point $p\in J$ with $S(p)>0$ as ``casting a shadow'' of radius  $\sigma S(p)$ around
$p$, then the lemma says that  the union of all shadows
does not cover $J$,  and so there is a ``place in the sun''.

\begin{proof} 
  Without loss of generality we may assume that $\diam(J)=1$.
  Consider the set $A\coloneqq \{p\in J : S(p)>0\}$. 
  Let $\lambda:=1/(6M)$ and define
  $A_n:= \{ p\in A: S(p)\ge \lambda^{n}\}$ for $n\in \N_0$.
  Obviously, $A_n\subset A_{n+1}$ for $n\in \N_0$ and
  $A=\bigcup_{n\in \N_0} A_n$.  We will inductively define arcs
  $J_n\subset J$ for $n\in \N_0$ such that
  $J_0\supset J_1 \supset J_2\dots $,
  $ \diam(J_n)=\lambda^{n}$
for all $n\in \N_0$, and 
  $ \dist(J_n, A_{n-1})\ge \lambda^{n} $ 
  for all $n\in \N$. 
  
  We set $J_0\coloneqq J$. Suppose arcs $J_0, \dots, J_n$ with the desired properties have already been defined for some $n\in \N_0$. Then by our hypotheses $ \#(A_n\cap J_n) \le M. $ It follows from Lemma~\ref{lem:arc-decomp} that we can find an arc 
  $J_{n+1}\sub J_n$ with 
  $$ \diam(J_{n+1}) = \tfrac1{6M} \diam (J_n) =\lambda \diam (J_n)=\lambda^{n+1} $$
  and  $ \dist(J_{n+1}, A_n)\ge \lambda^{n+1}$. 
  Hence $J_{n+1}$ has the desired properties, and we can continue the process indefinitely. 
 
 We have  $\bigcap_{n\in \N_0} J_n\ne \emptyset$, and so we can pick a point 
$x\in J$   that lies in all arcs $J_n$. If $p\in A$ is arbitrary, then there exists a smallest $n\in \N_0$ such that $p\in A_n$. 
Then $S(p)\in [ \lambda^n, \lambda^{n-1})$, and so
$$ d(x,p)\ge \dist(x, A_n)\ge \lambda^{n+1}\ge \lambda^2S(p).$$
So if we  choose $\sigma=\lambda^2=1/(36M^2)$, then $x$ is a
point as desired.  
\end{proof} 

\begin{proof}[Proof of Proposition~\ref{prop:branchaway}] 
Let $\Delta>0$ and suppose $J\sub \qT$ is an arc with 
$\diam(J)\ge \Delta$.  Then $J=[u,v]$, where $u,v\in\qT$   are the endpoints  of $J$. We set $S(p)=\Delta$ for $p\in \{u,v\}$, 
$S(p)=\min\{H_\qT(p),\Delta\}$ for a branch point $p\in (u,v)$,   and $S(p)=0$ for all other points $p\in   (u,v)$.  Since $\diam(J)\ge \Delta$ and $0\le S(p)\le \Delta$ for $p\in J$, we can consider $S$ as a function $S\colon J\ra [0, \diam(J)]$. 

\smallskip 
{\em Claim.} There exists a constant $M=M(N)\in \N$ such that 
  for all arcs $I\subset J$ we have 
  \begin{equation} \label{eq:arccond}
      \# \{ p\in I: S(p)\ge \diam (I)\} \le M. 
  \end{equation}
  
In other words, $S$ satisfies the hypotheses of Lemma~\ref{lem:Sonne} with a constant 
$M=M(N)$ only depending on the doubling constant $N$ of $\qT$. 

\smallskip 
To see this, fix an arc $I\sub J$  and let
$R\coloneqq \{p \in \inte (I): S(p)\ge \rho\}$, where
$\rho \coloneqq \diam(I)>0$. Each point $r\in R$ is a branch point of 
$\qT$ and there exists a large component $U_r$ of
 $\qT\setminus \{r\}$ that is disjoint from $I$, but attached to $I$ through the point  $r$. There are $\#R$ such components. 
 The doubling property then gives 
 a bound on $\#R$ only depending on $N$. In the following we
 present the details of
this argument, illustrated in
Figure~\ref{fig:boundR}. 

\begin{figure}[h]
  \begin{overpic}[width=8cm, tics=10, 
    ]{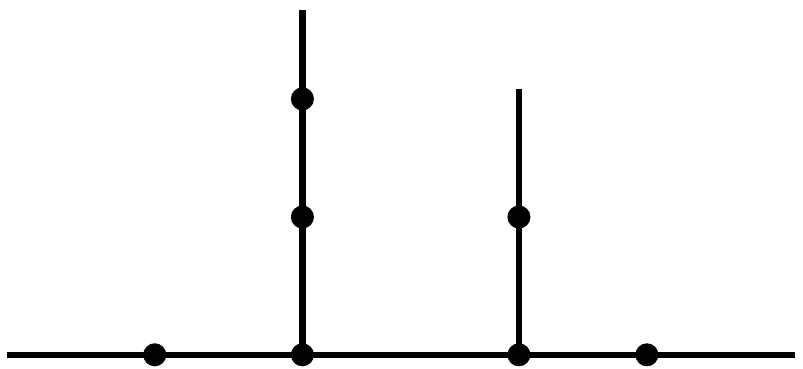}
    \put(95,-5){$J$}
    \put(76,-5){$I$}
    \put(79.2,4.2){$a'$}
    \put(17.6,4.2){$a$}
    \put(37,-3){$r$}
    \put(64,-4){$r'$}
    \put(40,17.7){$q_r$}
    \put(40,32){$\widetilde{q}_r$}
    \put(39,8){$U_r$}
    \put(67,17.7){$q'_r$}
  \end{overpic}
  \caption{Bounding the number of elements in $R$.}
  \label{fig:boundR}
\end{figure}

We have  $I=[a,a']$, where $a,a'\in I$ are the endpoints of $I$. 
Consider an arbitrary  point $r\in R\sub \inte(I)=(a,a')$. 
Then  $r$ is a branch point of $\qT$ with $H_\qT(r) \ge S(r) \ge  \rho>0$.
Each of the connected sets  $[a,r)$ and $(r,a']$ is contained in a component  of $\qT\setminus \{r\}$. Hence there must be another component 
$U_r$ of $\qT\setminus \{r\}$ with  
$\diam(U_r) \ge H_\qT(r) \ge \rho$ that  is disjoint from $I=[a,r)\cup \{r\} \cup (r,a']$.   There exists  a point 
$\widetilde q_r\in U_r$ with $d(\widetilde q_r, r)\ge \diam(U_r)/2\ge \rho/2$; otherwise,  $\mybar{U}_r=U_r\cup \{r\}\sub B(r, \rho/2)$ and so $\diam(U_r) \le \diam(\mybar{U}_r)<\rho$, which is a contradiction.



Then $(r, \widetilde q_r]\sub U_r$, and it easily follows from the intermediate value theorem that we can find a point $q_r\in 
(r, \widetilde q_r]\sub U_r$ with $d(q_r, r)=\rho/2$. We have 
$$ d(a, q_r)\le d(a,r)+d(r, q_r)\le \diam(I)+\rho/2=3\rho/2, $$
and so $q_r\in \overline B(a,3\rho/2)$.

If  $r,r'\in R$  with $r\ne r'$, then the corresponding points $q_r$ and  $q_{r'}$ lie in different components of $\qT\setminus\{r'\}$. 
To see this, note that $\mybar{U}_r=U_r\cup\{r\}$ is a connected set with 
$$  \mybar{U}_r=U_r\cup\{r\}\sub (\qT\setminus I)\cup (\qT\setminus \{r'\})\sub \qT\setminus \{r'\},$$
and so  $\mybar{U}_r$  is contained in a  component of $\qT\setminus \{r'\}$. 
In particular, $q_r\in U_r$ and $r\in   I\setminus \{r'\}$ lie in the same component of $\qT\setminus \{r'\}$. On the other hand, $q_{r'}$ was chosen from a component $U_{r'}$ of $\qT\setminus \{r'\}$ that does  not contain any point of $I$.

 Since $q_r$ and  $q_{r'}$ lie in different components of $\qT\setminus\{r'\}$, 
 Lemma~\ref{lem:top_T}~\ref{item:top_T1} implies that $r'\in [q_r, q_{r'}]$.   In particular, 
\begin{equation*}
  d(q_r, q_{r'})
  =
  \diam\, [q_r, q_{r'}]
  \ge 
  d(r', q_{r'})=\rho/2. 
\end{equation*}

So the points $q_r$, $r\in R$, have pairwise mutual distance 
$\ge \rho/2$ and are all contained in the ball $\overline B(a, 3\rho/2)$. It follows that $\#R$ is bounded by a constant only depending on $N$ (see the discussion in the beginning of Section~\ref{sec:diameter-distance}). Since the endpoints of $I$ are not contained in $R$, we have to possibly increase this bound by $2$  to obtain a bound as in \eqref{eq:arccond} with a constant $M=M(N)$. The Claim follows.




\smallskip
Lemma~\ref{lem:Sonne} now guarantees the existence of a point 
$x\in J$ such that 
$$ d(x,p)\ge \sigma S(p) $$ for all $p\in J$, where 
$\sigma=\sigma(M)=\sigma(N)>0$ can be chosen to depend only on $M$ and hence on $N$. We may assume that $0<\sigma\le 1$, and so $\sigma^2\le \sigma$. 

We claim that the statement of the proposition is true with $\ga\coloneqq \sigma^2/2$ which only depends on $N$. To see this,  let $b\in \qT$ be an arbitrary branch point of $\qT$. As we travel from $b$ to $x$ along the arc $[b,x]$, there is a first point $r\in J$. 
We now consider two cases depending on the location of $r$.

\smallskip
{\em Case 1.} $r\in B(u, \sigma \Delta/ 2) \cup 
B(v, \sigma \Delta/ 2)$.  In this case,  we may assume $r\in B(u, \sigma \Delta/ 2)$. Since $S(u)=\Delta$, by  choice of $x$ we 
then have 
\begin{align*}
  d(x,b) 
  & =\diam\,[x,b]\ge d(x,r)\ge d(x,u)-d(r,u)\\
  &\ge \sigma S(u)-\sigma\Delta/2 = \sigma \Delta /2  \ge 
    \sigma \min\{H_\qT(b), \Delta\}/2\\
  &\ge \ga  \min\{H_\qT(b), \Delta\}. 
\end{align*}  
This is the desired inequality in this case.  

\begin{figure}
  \begin{overpic}[width=10cm, tics=10, 
    ]{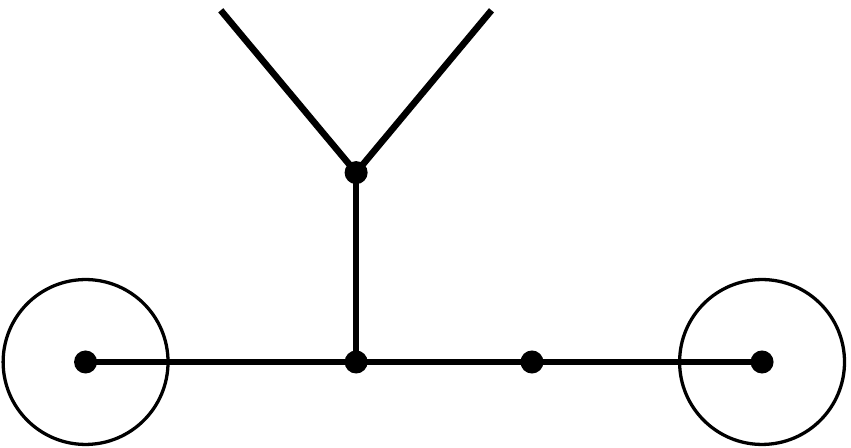}
    \put(41,5.2){$r$}
    \put(61,5.2){$x$}
    \put(0,22){$B(u,\sigma \Delta/2)$}
    \put(79,22){$B(v,\sigma \Delta/2)$}
    \put(9,5.2){$u$}
    \put(88.2,5.2){$v$}
    \put(44.8,30.7){$b$}
    \put(52.6,40){$U$}
    \put(45,20){$V_1$}
    \put(27,4.5){$V_2$}
    \put(51,4.5){$V_3$}
  \end{overpic}
  \caption{The estimate of Case 2.}
  \label{fig:prop31_case2}
\end{figure}


\smallskip
{\em Case 2.} $r\not \in  B(u, \sigma \Delta/ 2) \cup 
B(v, \sigma \Delta/ 2)$.  Then in particular $r\in \inte(J)$. There exists  a component $U$ of $\qT\setminus \{b\}$ that is disjoint from $J$ and satisfies $\diam(U) \ge H_\qT(b)$. This connected set does not contain $r\in J$ and so it is contained in a component $V_1$ of $\qT\setminus \{r\}$. Hence 
$$\diam(V_1)\ge \diam(U)\ge H_\qT(b). $$
Two other components $V_2$ and $V_3$ of   
$\qT\setminus \{r\}$ contain the half-open (and non-empty) arcs 
$[u,r)$ and $[v,r)$, respectively. The situation is illustrated
in Figure~\ref{fig:prop31_case2}.
It follows that 
\begin{equation*}
  \diam(V_2)\ge \diam[u,r)\ge d(u,r)\ge \sigma \Delta/2. 
\end{equation*}
Here we have used that $r\not \in B(u, \sigma \Delta/ 2)$. 
Similarly, 
$\diam(V_3)\ge \sigma \Delta/2$, and so 
\begin{align*} 
 H_\qT(r) & \ge \min\{ \diam(V_i): i\in \{1,2,3\}\}\\   &\ge \min \{ H_\qT(b), \sigma 
\Delta/ 2\} \ge \frac \sigma2 \min\{H_\qT(b), \Delta\}. 
\end{align*} 
It follows that 
\begin{align*} 
  d(x,b) 
  &=\diam\,[x,b]\ge d(x,r) 
    \ge  \sigma S(r)
    = \sigma \min\{H_\qT(r), \Delta\}\\
  &\ge \sigma^2 \min\{H_\qT(b),  \Delta\}/ 2
    =\gamma \min\{H_\qT(b), \Delta\},
\end{align*}
as desired. 

Note that $x\in J$ is a double point of $\qT$. Indeed, $x$ is not a branch point of $\qT$, because $x$ has a positive distance to each of them. On the other hand, 
$d(x,u)\ge \sigma S(u)=\sigma\Delta>0$, and so $x\ne u$. Similarly, $x\ne v$. Since $x\in [u,v]=J$, the points $u$ and $v$ lie in different components of  $\qT\setminus\{x\}$ by  Lemma~\ref{lem:top_T}~\ref{item:top_T1}. In particular, there are at least two, but not more than two such components. Hence $x$ is a double point of $\qT$. 
\end{proof} 

 With some small changes in the previous proof one can
  show that the set of double points $x\in
  J$ that satisfy  the estimate in 
  Proposition~\ref{prop:branchaway} is not only non-empty, but in a suitable sense actually  fairly large  (namely, {\em uniformly perfect}). Such a statement was  proved in
  recent work by Lin and Rohde (see \cite[Lemma~4.5]{LR19}).

\section{Good double points}
\label{sec:goodpts}

In this section we introduce the concept  of a ``good" double point
of our given qc-tree $\qT$. Attached to this concept are  certain numerical parameters.  The goal of this section is to show that with appropriate choices of these parameters,  one can  use a maximal set $V$ of  good double points to  obtain  a decomposition of $\qT$ with some desired geometric properties 
(see Proposition~\ref{prop:maxdouble}).

%

We  fix a scale $0<\Delta\leq \diam(\qT)=1$. We consider double
points $x\in \qT$ with the property that both components of
$\qT\setminus\{x\}$ are large, meaning that
\begin{equation}
  \label{eq:double_Delta}
  D_{\qT}(x) \geq \beta\Delta 
\end{equation}
for some constant $\beta\geq 1$ ($D_{\qT}$ was defined in \eqref{eq:DT}). We will choose $\beta$  according  to the following statement.  

\begin{proposition}
  \label{prop:V_Delta_sep}
  There is a constant  $\beta=\beta(N) \geq 1$ only depending on the doubling constant $N$ of $\qT$ such that the following statement is true: if $V\subset \qT$ is  a set of double points of $\qT$ that 
  are $\Delta$-separated and  satisfy
  \eqref{eq:double_Delta}, then
  either 
  \begin{enumerate}
  \item
    \label{item:V_Delta_sep1}
    for each component $X$ of $\qT\setminus V$
   we have 
    \begin{equation*}
      \diam(X) \le 3 \beta \Delta
    \end{equation*}
    or
  \item
    \label{item:V_Delta_sep2}
    there is an arc $I\subset \qT$ with
    \begin{equation*}
      \diam(I) \geq \Delta \text{ and } \dist(I,V) \geq \Delta,
    \end{equation*}
   and such that 
    \eqref{eq:double_Delta} holds for each double point $x\in I$ of $\qT$. 
  \end{enumerate}
 \end{proposition}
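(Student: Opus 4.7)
The plan is to prove the contrapositive of~(i): assuming some component $X$ of $\qT\setminus V$ has $\diam(X) > 3\beta\Delta$, I will construct an arc $I$ satisfying all parts of~(ii). The setup begins by selecting an arc $J\coloneqq[x_0, y_0]\subset X$ of diameter exactly $3\beta\Delta$, obtained by picking $x_0, y_0\in X$ with $d(x_0, y_0) > 3\beta\Delta$ (possible since $\diam(X) > 3\beta\Delta$) and shortening via the intermediate value theorem applied to $p\mapsto d(x_0, p)$ along $[x_0, y_0]$. I then isolate the middle portion of $J$: let $p_1, p_2\in J$ satisfy $d(x_0, p_1) = d(p_2, y_0) = \beta\Delta$, and set $J'\coloneqq[p_1, p_2]$. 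Additivity of the diameter metric along arcs yields $\diam(J') = \beta\Delta$.

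Two preliminary properties of $J'$ drive the rest. First, every double point $p\in J'$ of $\qT$ satisfies $D_{\qT}(p)\ge \beta\Delta$: the two components of $\qT\setminus\{p\}$ respectively contain $[x_0, p)$ and $(p, y_0]$ and hence both have diameter $\ge \beta\Delta$. Second, writing $V_{J'}\coloneqq\{v\in V : d(v, J') < \Delta\}$, I will show $V_{J'}\subset\partial X$: if $v\in V\setminus\partial X$ then $v\notin \overline X$, so for any $q\in J'\subset X$ the arc $[v, q]$ must cross $\partial X\subset V$ at some first point $w\ne v$, giving $d(v, q) = \diam[v, q]\ge d(v, w) \ge \Delta$ by $\Delta$-separation of $V$.

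The crux is a doubling-based bound $\#V_{J'}\le N_1$ with $N_1 = N_1(N)$. For each $v\in V_{J'}\subset\partial X$, let $U_v^-$ denote the component of $\qT\setminus\{v\}$ disjoint from $X$; by~\eqref{eq:double_Delta}, $\diam(U_v^-)\ge D_{\qT}(v)\ge\beta\Delta$, so the intermediate value theorem produces $q_v\in U_v^-$ with $d(v, q_v) = \beta\Delta/2$. Since $X$ is connected and disjoint from each $v_i$, it lies in a single component $U_{v_i}^+$ of $\qT\setminus\{v_i\}$; by closure, $\overline X\setminus\{v_i\}\subset U_{v_i}^+$, so in particular $v_j\in U_{v_i}^+$ for distinct $v_i, v_j\in V_{J'}$. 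A short arc-comparison argument (if $p\in U_{v_1}^-\cap U_{v_2}^-$, then $v_1\in[p, v_2]$ and $v_2\in[p, v_1]$, contradicting uniqueness of arcs in a tree) then shows that $\overline{U_{v_1}^-}$ and $\overline{U_{v_2}^-}$ are disjoint and that the arc $[q_{v_1}, q_{v_2}]$ decomposes as $[q_{v_1}, v_1]\cup[v_1, v_2]\cup[v_2, q_{v_2}]$. Additivity of the diameter metric along this arc yields
\begin{equation*}
  d(q_{v_1}, q_{v_2}) = \beta\Delta + d(v_1, v_2) \in \bigl[(\beta + 1)\Delta,\, 2(\beta + 1)\Delta\bigr),
\end{equation*}
where the upper bound uses $d(v_1, v_2) < (\beta + 2)\Delta$ (both $v_i$ are within $\Delta$ of $J'$, which has diameter $\beta\Delta$). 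Thus $\{q_v : v\in V_{J'}\}$ is a $(\beta + 1)\Delta$-separated subset of a ball of radius $2(\beta + 1)\Delta$, and the doubling property forces $\#V_{J'}\le N_1$.

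Finally, an elementary length count on $J'$ concludes the argument. For each $v\in V_{J'}$, the set $B_v\coloneqq\{q\in J' : d(q, v) < \Delta\}$ is a subarc of $J'$ of diameter $\le 2\Delta$, so in the additive diameter metric on $J'$ the union $\bigcup_{v\in V_{J'}} B_v$ has total length $\le 2N_1\Delta$, and its complement in $J'$ consists of at most $N_1 + 1$ subarcs of total length $\ge (\beta - 2N_1)\Delta$. With the choice $\beta\coloneqq 3N_1 + 1$, pigeonhole produces a subarc of $J'\setminus\bigcup B_v$ of length $\ge\Delta$, and within it the intermediate value theorem yields $I$ of diameter exactly $\Delta$. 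Then $I\subset J'$ ensures~\eqref{eq:double_Delta} for every double point of $\qT$ lying in $I$; $I\cap B_v = \emptyset$ for every $v\in V_{J'}$ gives $\dist(I, V_{J'})\ge\Delta$; and $\dist(I, V\setminus V_{J'})\ge\Delta$ is automatic from $d(v, J')\ge\Delta$ for $v\in V\setminus V_{J'}$. Hence (ii) holds. The main difficulty lies in the bound $\#V_{J'}\le N_1$: pinning $\diam(J)$ to exactly $3\beta\Delta$ (rather than merely $>3\beta\Delta$) is essential, so that the projections of $V_{J'}$ stay inside a region of size $\beta\Delta$ and the doubling property can supply a universal bound.
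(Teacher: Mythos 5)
Your overall strategy is the same as the paper's: bound the number of ``bad'' points of $V$ near a suitably truncated arc $J'\subset X$ by a doubling argument, then pigeonhole on $J'$ to find a subarc $I$ at distance $\ge\Delta$ from $V$. However, you repeatedly invoke \emph{additivity} of the diameter metric along arcs, and this is simply false. Diameter metrics are only \emph{subadditive}: if $z\in[x,y]$, then $\dia(x,y)\le\dia(x,z)+\dia(z,y)$, and strict inequality is the typical case (e.g., a ``wiggly'' arc whose endpoints happen to realize the diameter). This error appears in three places. First, ``$\diam(J')=\beta\Delta$'' is wrong; the triangle inequality $d(x_0,y_0)\le d(x_0,p_1)+d(p_1,p_2)+d(p_2,y_0)$ only gives $\diam(J')=d(p_1,p_2)\ge\beta\Delta$, while the upper bound is $\diam(J')\le\diam(J)=3\beta\Delta$. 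Second, ``$d(q_{v_1},q_{v_2})=\beta\Delta+d(v_1,v_2)$'' is wrong; what is true is the lower bound $d(q_{v_1},q_{v_2})=\diam[q_{v_1},q_{v_2}]\ge d(q_{v_1},v_1)=\beta\Delta/2$ (since $v_1\in[q_{v_1},q_{v_2}]$ and $d$ is a diameter metric) and the upper bound $d(q_{v_1},q_{v_2})\le\beta\Delta+d(v_1,v_2)$ by the triangle inequality, which together still give a bounded ratio and hence $\#V_{J'}\le N_1(N)$. Third, and most seriously, the final pigeonhole invokes ``total length'' in an ``additive diameter metric on $J'$,'' which is not a well-defined notion. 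As written, this step is not justified.

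The good news is that the pigeonhole \emph{can} be repaired by subadditivity: if $J'$ is decomposed into an ordered sequence of non-overlapping subarcs $I_1,\dots,I_m$, then $\diam(J')\le\sum_j\diam(I_j)$. Taking the decomposition into (closures of) the $\le N_1$ components of $\bigcup_v B_v$ (each of diameter $\le 2\Delta\cdot\#\{v\text{ contributing}\}$, by a greedy chain-cover of that component by the overlapping $\overline{B_v}$, so their total is $\le 2N_1\Delta$) and the $\le N_1+1$ complementary components gives $\beta\Delta\le\diam(J')\le 2N_1\Delta+(N_1+1)\max_j\diam(C_j)$, so $\max_j\diam(C_j)\ge\Delta$ with $\beta=3N_1+1$. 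So your argument is salvageable, but the write-up relies on a false property and does not present the subadditivity estimate that is actually needed. This is exactly the subtlety the paper bypasses by replacing the ``shadows'' $B_v$ with single \emph{root points} $r_v$ (the first entry of $[v,a]$ into $J'$) and invoking Lemma~3.2, which performs the pigeonhole by cutting $J'$ into pieces of \emph{equal diameter}; that lemma is the correct tool for a non-additive diameter metric, and the shielding property of $r_v$ (any path from $v$ into $J'$ meets $r_v$ or $\partial J'$) replaces your shadow-avoidance computation. Your observation that $V_{J'}\subset\partial X$ is a nice simplification the paper does not need but which correctly identifies the component of $\qT\setminus\{v\}$ disjoint from $J'$, so that part is fine. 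In short: the route is genuinely similar in spirit to the paper's, your shadow-based variant is workable, but the ``additivity'' claims are mathematically false and the pigeonhole step as written has a real gap that must be closed with a subadditivity argument (or by appealing to Lemma~3.2 as the paper does).
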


 Proposition~\ref{prop:branchaway}  implies that each arc $I\sub \qT$ contains double points of $\qT$.  So in case \ref{item:V_Delta_sep2} of Proposition~\ref{prop:V_Delta_sep}, we can add  a double point $x\in I$ of $\qT$ to $V$. 
Then this new set $V'=V\cup\{x\}$ is again a set of double points of $\qT$ that are $\Delta$-separated and satisfy
\eqref{eq:double_Delta}. This implies that for 
a maximal set $V$ as in the proposition, statement 
\ref{item:V_Delta_sep1} will always be true. 

\begin{proof}
  [Proof of Proposition~\ref{prop:V_Delta_sep}] By the doubling
  property, there exists a constant $N'=N'(N)\in \N$ only
  depending on the doubling constant $N$ of $\qT$ with the
  following property:
  if $\rho>0$ and $B\sub \qT$ is a ball in $\qT$ of radius
  $6\rho$, then every $\rho$-separated subset of $B$ contains at
  most $N'$ points. 
  We will show that the proposition
  is true with the constant $\beta = 6 N'$, which only depends on
  $N$.
 
 Let $V\subset \qT$ be a set as in the
  statement. Note that  $V$ is a finite set, because $V$ is $\Delta$-separated and $\qT$ is compact.  If all components $X$ of $\qT\setminus V$ satisfy 
  \ref{item:V_Delta_sep1}, we are done. Otherwise, 
  there exists  a component $X$ of
  $\qT\setminus V$ with $\diam(X) > 3\beta \Delta$. Then we can 
  find points $z,w\in X$ with $d(z,w)\ge 3\beta \Delta$. By 
  Lemma~\ref{lem:top_T}~\ref{item:top_T1} we then have 
  $[z,w]\cap V=\emptyset$ which implies that $J\coloneqq [z,w] \sub X$.
  
Note that  $\diam(J)\ge d(z,w)\ge  3\beta \Delta$. By decomposing $J$ into three non-overlapping subarcs of equal diameter $\ge \beta \Delta$ and trimming the ``middle" arc to appropriate size, we can find an arc 
   $J'\subset J\sub X$ with
  $\diam(J') = \beta \Delta$ that has distance $\geq \beta\Delta$
  from each of the two endpoints of $J$ (see the proof of 
  Lemma~\ref{lem:arc-decomp} for a very similar argument). This implies  that for every double
  point $x\in J'$ of $\qT$ the estimate \eqref{eq:double_Delta} holds.

 We  want to find a subarc
  $I\subset J'\sub \qT\setminus V$ with $\diam(I)\ge \Delta$ and $\dist(I,V) \geq \Delta$. To this
  end, we  fix a point $a\in J'$ as ``base point".  Now suppose  $v\in V$ is a point with $\dist(v,J') < \Delta$. If we travel from $v$ towards $a$ along  $[v,a]$, there is 
 a  first point  $r=r_v$ that belongs to $J'$ (see 
  Figure~\ref{fig:max_delta_D} for an illustration). Let
  \begin{equation*}
    R=\{r_v: v\in V,\,  \dist(v, J')< \Delta\}
  \end{equation*}
  be  the set of these ``root'' points.

  \begin{figure}
    \begin{overpic}[width=7cm,tics=10,
      ] {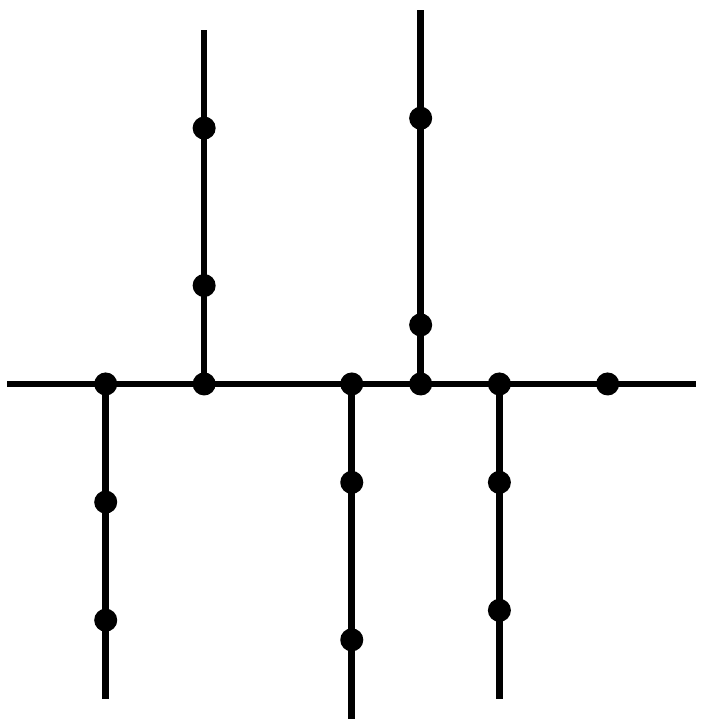}
      \put(82.1,49.7){$a$}
      \put(97,45){$J'$}
      \put(27,42){$r_v$}
      \put(30.7,59.5){$v$}
      \put(20,69){$U$}
      \put(30.7,80.7){$q_r$}
      \put(68,49.7){$r'$}
      \put(71,15){$q_{r'}$}
    \end{overpic}
    \caption{Roots in $J'$.}
    \label{fig:max_delta_D}
  \end{figure}


  \smallskip 
  {\em Claim.} $\#R \le N'$. 
  \smallskip

  To see this, first note that for each point $r\in R$ we can
  choose a point $v\in V$ with $d(v,r)< \Delta$ and
  $r=r_v$. The connected set $J'\sub \qT\setminus V\sub \qT\setminus \{v\}$ lies in one component of $\qT\setminus \{v\}$. Then for the other 
 component $U$ of 
  $\qT\setminus \{v\}$ we have $U\cap J' =\emptyset$ and
  $\diam(U) \ge D_\qT(v)\ge \beta \Delta$, because $v\in
  V$. Therefore,  we 
  can find a point $q\in U$ with $d(q, v)=\beta\Delta/2$ (see the proof of Proposition~\ref{prop:branchaway} for more details in a similar claim).  Define
  $v_r\coloneqq v$ and $q_r\coloneqq q$. 
  We then
  have
  \begin{align*} 
    d(q_r, a)
    &\le d(q_r, v_r)+\dist(v_r, J')+\diam(J')
    \\
    &< \beta \Delta/2+\Delta+ \beta\Delta
    \le 3 \beta \Delta.
  \end{align*}
  Thus $q_r\in B(a,3\beta\Delta)$.
  
  Moreover, if $r,r'\in R$ are distinct, then the corresponding points $q_r$ and $q_{r'}$ lie in different components of 
  $\qT\setminus \{r\}$. This can be justified by an argument similar to the one in the proof of 
  Proposition~\ref{prop:branchaway}. Hence 
    $r\in [q_r, q_{r'}]$ and so $[q_r, r]\sub [q_r, q_{r'}]$. On the other hand, $q_{r}$ and $r$ lie in different components of $\qT\setminus \{v_{r}\}$, and so 
    $v_{r} \in [q_{r}, r]\sub [q_r, q_{r'}]$. It follows that
  \begin{equation*}
    d(q_r, q_{r'})
    =
    \diam\,[q_r, q_{r'}]
    \ge
    d(q_{r}, v_r)\ge \beta \Delta/2.
  \end{equation*}
  This shows that the set $Q\coloneqq \{ q_r: r\in R\}$ consists
  of $(\beta\Delta/2)$-separated points and is contained in the
  ball $B(a, 3\beta \Delta)$. The Claim now follows from the
  definition of the constant $N'$.

  \smallskip
  By the Claim and Lemma~\ref{lem:arc-decomp} we can find an arc
  $I\sub J'$ with
  $$\diam(I) = \frac 1{6N'}\diam(J') = \frac{\beta}{6N'} \Delta =
  \Delta$$ (by choice of $\beta = 6 N'$) and
  $\dist(I, R\cup \partial J')\ge \Delta$.

  Then $\dist(I, V)\ge \Delta$. Indeed, let $v\in V$ be
  arbitrary. If $\dist(v,J')\ge \Delta$, then clearly
  $\dist(v,I)\ge \dist(v,J')\ge \Delta$.  If
  $\dist(v,J')< \Delta$, then as we travel from $v$ to a point
  in $I$ along an arc, we pass through $\partial J'$ or the root
  point $r_v\in J'\cap R$.  So
  $ \dist(v, I)\ge \dist(I, R\cup \partial J')\ge \Delta $ in this
  case as well. 
  
  Recall that $J'\subset J$ was chosen such that
  every double point of $\qT$ contained in $J'$, and hence every such
  point contained in $I\subset J'$,  satisfies
  \eqref{eq:double_Delta}. Therefore, the arc $I$ has the desired
  properties and the statement follows.
\end{proof}

In addition to \eqref{eq:double_Delta}, we want to choose double
points $x$ of $\qT$ that are separated from the branch points of $\qT$  in a controlled
way. More precisely, we require that 
\begin{equation}
  \label{eq:d_x_branch}
  d(x,b) \ge \ga   \min\{H_\qT(b),  \Delta\} 
\end{equation}
for all branch points $b\in \qT$. Here $\gamma=\ga(N)$ is the constant
from Proposition~\ref{prop:branchaway} that can be chosen to depend only on the doubling constant $N$ of $\qT$. A double point $x\in \qT$
is called \emph{$(\beta,\gamma)$-good} at scale $\Delta$, if it
satisfies \eqref{eq:double_Delta} and \eqref{eq:d_x_branch}.

\begin{proposition}
  \label{prop:maxdouble}
  Let $\beta=\beta(N)\geq 1$ be the constant from  Proposition~\ref{prop:V_Delta_sep}, $\gamma=\gamma(N)>0$ be the constant from
  Proposition~\ref{prop:branchaway}, and $0< \Delta\leq 1$. 

If  $V\sub \qT$ is a  maximal $\Delta$-separated set  of
  $(\beta,\gamma)$-good double points at scale $\Delta$, then    \begin{equation*}
    \diam (X)\le 3\beta \Delta 
  \end{equation*}
 for each component $X$ of $\qT\setminus V$. 
\end{proposition}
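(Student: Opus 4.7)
The plan is to argue by contradiction, using the two propositions from this section and from Section~\ref{sec:place_sun} in combination. Suppose $V \subset \qT$ is a maximal $\Delta$-separated set of $(\beta,\gamma)$-good double points at scale $\Delta$, and assume toward a contradiction that some component $X$ of $\qT\setminus V$ satisfies $\diam(X) > 3\beta\Delta$.

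Since $V$ is in particular a $\Delta$-separated set of double points of $\qT$ satisfying \eqref{eq:double_Delta}, Proposition~\ref{prop:V_Delta_sep} applies. Because case \ref{item:V_Delta_sep1} of that proposition fails by our assumption on $X$, case \ref{item:V_Delta_sep2} must hold: there is an arc $I \subset \qT$ with $\diam(I) \ge \Delta$ and $\dist(I, V) \ge \Delta$ such that every double point of $\qT$ contained in $I$ satisfies \eqref{eq:double_Delta}.

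Now I apply Proposition~\ref{prop:branchaway} to this arc $I$. Since $\diam(I) \ge \Delta$, the proposition produces a double point $x \in I$ of $\qT$ with $d(x,b) \ge \gamma\min\{H_\qT(b), \Delta\}$ for every branch point $b \in \qT$; that is, $x$ satisfies \eqref{eq:d_x_branch}. Because $x \in I$, it automatically satisfies \eqref{eq:double_Delta} by the property of $I$ from case \ref{item:V_Delta_sep2}. Hence $x$ is a $(\beta,\gamma)$-good double point of $\qT$ at scale $\Delta$. Moreover, $d(x,v) \ge \dist(I,V) \ge \Delta$ for every $v \in V$, so $V \cup \{x\}$ is still a $\Delta$-separated set of $(\beta,\gamma)$-good double points at scale $\Delta$, strictly larger than $V$. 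This contradicts the maximality of $V$ and completes the proof.

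There is no real obstacle here: the work is all packaged into the preceding propositions, and this argument is essentially the one already anticipated in the remark immediately following Proposition~\ref{prop:V_Delta_sep}, which observes that case \ref{item:V_Delta_sep2} allows one to enlarge $V$ by a good double point supplied by Proposition~\ref{prop:branchaway}. The only thing to be careful about is verifying that the new point $x$ is genuinely $(\beta,\gamma)$-good (both \eqref{eq:double_Delta} and \eqref{eq:d_x_branch}) and is $\Delta$-separated from all of $V$, which both follow directly from the conclusions of the two invoked propositions.
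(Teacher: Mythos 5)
Your proof is correct and follows essentially the same route as the paper's: a contradiction argument that invokes Proposition~\ref{prop:V_Delta_sep} to produce the arc $I$ and then Proposition~\ref{prop:branchaway} to find a $(\beta,\gamma)$-good double point $x\in I$ that can be added to $V$, contradicting maximality. Nothing is missing and nothing differs in substance.
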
 
Note that such a maximal set $V$ always exists, but we could very
well have $V=\emptyset$.  In this case, the statement says that
$1=\diam (\qT)\le 3\beta \Delta$. In other words, $V$ is necessarily non-empty if $0<\Delta<1/(3\beta)$. 

\begin{proof}
  Let $V$ be a set as in the statement. We argue by contradiction and assume that there is a
  component $X$ of $\qT\setminus V$ with $\diam(X) > 3\beta\Delta$. Then
  we can find  an arc $I\subset \qT$ as in
  Proposition~\ref{prop:V_Delta_sep}~\ref{item:V_Delta_sep2}.

  By Proposition~\ref{prop:branchaway} we can find a double point
  $x\in I$ of $\qT$ such that
  \begin{equation*}
    d(x,b)\ge \ga \min\{H_\qT(b), \Delta\}
  \end{equation*}
  for all branch points $b\in \qT$. Then $x$ satisfies
   \eqref{eq:double_Delta} and  \eqref{eq:d_x_branch}. Therefore, $x$ is a $(\beta, \ga)$-good
  double point of $\qT$ at scale $\Delta$. We also  have
  \begin{equation*}
    \dist (x, V)\ge \dist (I, V)\ge \Delta.
  \end{equation*}
  Hence $V'=V\cup\{x\}$ is a $\Delta$-separated set consisting of
  $(\beta, \ga)$-good double points at scale $\Delta$. Since
  $x\not\in V$, this contradicts the maximality of $V$, and the
  statement follows.
\end{proof}

At this point the importance of \eqref{eq:d_x_branch} is not at all 
obvious. The relevance of this condition will become apparent only later (see the remarks before Lemma~\ref{lem:branch_not_partial}).

\section{Subdividing the tree}
\label{sec:subdividing-tree}

We want to subdivide our given qc-tree $\qT$. As before, we
 may  assume that $\qT$ is equipped with a diameter metric $d$ and
that $\diam(\qT)=1$. We fix constants $\beta\ge 1$ and  $\ga>0$ depending
only on the doubling constant $N$ of $\qT$ as in
Proposition~\ref{prop:maxdouble}, and a (small) constant
$0<\delta<1/(3\beta)$.
 
\addtocontents{toc}{\SkipTocEntry}
\subsection*{Vertices and tiles} We will now inductively  construct  
sets $\V^n\subset \qT$ for $n\in \N$ such that
\begin{equation}
  \label{eq:Vn_inc}
   \V^1 \subset \V^2 \subset \V^3 \subset\dots,
\end{equation}
where each $\V^n$ is a maximal  $\delta^n$-separated set consisting of $(\beta,\ga)$-good double points at scale $\delta^n$.
Since $\qT$ is compact, each  set $\V^n$ will necessarily be
finite.  

For $\V^1$ we choose a maximal $\delta$-separated subset of $\qT$
consisting of $(\beta, \ga)$-good double points at scale
$\delta$.  Suppose for some $n\in \N$, the sets
$ \V^1 \subset \V^2 \subset\dots \sub\V^n \sub \qT$ with the desired
properties have been chosen. Then for
$\Delta=\delta^{n+1}\le \delta^n$ the set $\V^n$ is a
$\Delta$-separated subset of $\qT$ consisting of
$(\beta, \ga)$-good double points at scale $\Delta$. Hence it is
contained in a {\em maximal} such set. We pick such a maximal set
and denote it by $\V^{n+1}$. Clearly, $\V^n \sub \V^{n+1}$.  It
follows that we obtain sets $\V^n$ for all $n\in \N$ as
desired. Since $\delta^n\le \delta<1/(3\beta)$, we have  $\V^n\ne \emptyset $ for each 
$n\in \N$, as follows from the remark after Proposition~\ref{prop:maxdouble}.

Each  point $v\in \V^n$ is  called an \emph{$n$-vertex}.  The closure of a component of $\qT\setminus \V^n$ is  called
an \emph{$n$-tile}, and  the set of all $n$-tiles is denoted by
$\X^n$. We also speak of \emph{vertices} and \emph{tiles} if
their \emph{level} $n$ is clear from the context or irrelevant.

We now summarize some topological properties of vertices and tiles. Most of them are intuitively clear, often relying on the fact that each vertex is a double point, but we will include full proofs for the sake of completeness.  

\begin{lemma}
  \label{lem:vt} For each $n\in \N $ the following statements are true: 
  \begin{enumerate}
    \item 
    \label{item:vt1} Each $n$-tile $X$ is a subtree of $\qT$ with $ \partial X\sub \V^n$.
    
    \item
  \label{item:vt2} If $X$ is an $n$-tile and $v\in \V^n$, then
  $X$ is contained in the closure of one of the two components of
  $\qT\setminus \{v\}$ and disjoint from the other component.
  
  \item
  \label{item:vt3}
  If $X$ is an $n$-tile, then $\partial X\neq
  \emptyset$.

       \item 
    \label{item:vt4} Two distinct $n$-tiles $X$ and $Y$ have at most one point in common. Such a common point is an $n$-vertex and a boundary point of both $X$ and $Y$. 
    
      \item
    \label{item:vt5}
   Each $n$-vertex $v$ is contained in precisely two distinct $n$-tiles $X$ and $Y$.  
   
         \item
    \label{item:vt6}
 There are only  finitely many $n$-tiles.

   \item 
    \label{item:vt7} Each $(n+1)$-tile $X'$ is contained in a unique $n$-tile $X$. 
  \item
    \label{item:vt8}
    Each $n$-tile $X$ is equal to the union of all $(n+1)$-tiles $X'$
    with $X'\sub X$.  
            \item
    \label{item:vt9}
  If  $v$ is  $n$-vertex  and $X$ an $n$-tile with $v\in X$, then 
  $v\in \partial X$. Moreover, there exists precisely one
  $(n+1)$-tile $X'\subset X$ with $v\in X'$.
  
  \item
    \label{item:vt10}
     If $X$ is an $n$-tile and $\partial X=\{v\}\sub \V^n$ is a
  singleton set, then $X=\overline W$, where $W$ is a component
  of $\qT\setminus\{v\}$.

\end{enumerate}
\end{lemma}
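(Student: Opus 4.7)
The plan is to verify the ten assertions in order, with each leaning on Lemma~\ref{lem:top_T} and on the earlier parts of the current lemma. Parts \ref{item:vt1} and \ref{item:vt4} are essentially direct restatements of parts of Lemma~\ref{lem:top_T} applied to $V=\V^n$. For \ref{item:vt2}, the point $v\in\V^n$ is a double point by construction, so $\qT\setminus\{v\}$ has exactly two components $W_1,W_2$; writing $X=\overline U$ for a component $U$ of $\qT\setminus\V^n$, the set $U$ is connected and disjoint from $\{v\}$, so $U\sub W_j$ for some $j\in\{1,2\}$, and then $X=\overline U\sub\overline{W_j}=W_j\cup\{v\}$ is disjoint from the open set $W_{3-j}$. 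Part \ref{item:vt3} uses that $\V^n\neq\emptyset$, so $U$ is a proper open subset of the connected space $\qT$ and $\partial U\neq\emptyset$; a short argument using \ref{item:vt2} shows that every $v\in\partial U$ also belongs to $\partial\overline U=\partial X$, because arbitrarily close to $v$ there are points of the opposite branch lying outside $\overline U$.

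The main obstacle is \ref{item:vt5}, the claim that each $n$-vertex $v$ lies in exactly two $n$-tiles. Part \ref{item:vt2} shows that any $n$-tile containing $v$ sits in the closure of one of the two branches $W_1,W_2$ of $v$, so it suffices to prove each branch contributes exactly one tile. For existence in the branch $W_j$, a short sub-arc $(v,y]\sub W_j$ avoiding the finite set $\V^n\setminus\{v\}$ lies in a single component of $\qT\setminus\V^n$, whose closure is an $n$-tile in $\overline{W_j}$ containing $v$. For uniqueness, if two distinct components $U_1,U_2$ of $\qT\setminus\V^n$ both lie in $W_j$ and have $v$ on their boundary, I will pick $x_i\in U_i$ so close to $v$ that $\V^n\cap(v,x_i]=\emptyset$, forcing $(v,x_i]\sub U_i$. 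In the tree $\qT$, the tripod on $v,x_1,x_2$ has center $y\ne v$, because $v\notin(x_1,x_2)$: indeed $v$ cannot separate $x_1$ from $x_2$ in the subtree $\overline{W_j}$ since $W_j$ is connected. Then $[v,y]\sub[v,x_1]\cap[v,x_2]$, so $(v,y]\sub U_1\cap U_2$, contradicting $U_1\ne U_2$.

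The remaining items follow from \ref{item:vt1}--\ref{item:vt5}. For \ref{item:vt6}, each tile has at least one boundary vertex by \ref{item:vt3}, and each vertex lies in at most two tiles by \ref{item:vt5}, so $\#\X^n\le 2\,\#\V^n<\infty$. For \ref{item:vt7}, the inclusion $\V^n\sub\V^{n+1}$ yields $\qT\setminus\V^{n+1}\sub\qT\setminus\V^n$, so a component of the former lies in a unique component of the latter, giving $X'\sub X$; uniqueness of $X$ then follows from \ref{item:vt4} since $X'$ has more than one point. For \ref{item:vt8}, since $\V^{n+1}\setminus\V^n$ is a finite subset of $U$ and $U$ has no isolated points, $\overline U=\overline{U\setminus\V^{n+1}}$; decomposing $U\setminus\V^{n+1}$ into its components and taking closures produces the $(n+1)$-tiles filling $X$. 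For \ref{item:vt9}, the membership $v\in X\setminus U\sub\partial X$ is immediate, and applying \ref{item:vt5} at level $n+1$ together with \ref{item:vt2} shows that among the two $(n+1)$-tiles containing $v$, only the one sitting in the same branch of $v$ as $X$ can lie inside $X$. Finally, for \ref{item:vt10}, $U\sub W$ for some branch $W$ of $v$; were the inclusion strict, an arc in $W$ from $U$ to $W\setminus U$ would have to exit $U$ through a point of $\partial U=\partial X=\{v\}$, contradicting $v\notin W$, so $U=W$ and $X=\overline W$.
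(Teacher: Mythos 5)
Your proof is correct and follows the same overall plan as the paper's, reducing everything to Lemma~\ref{lem:top_T} and the fact that each $n$-vertex is a double point, but a few details differ. For \ref{item:vt5} you replace the paper's route (picking $z\in U$ so close to $v$ that $[x,z]$ misses $\V^n\setminus\{v\}$, hence $[x,z]\cap\V^n=\emptyset$ and $U=U_1$) with a tripod argument: letting $y$ be the median of $v,x_1,x_2$, noting $y\ne v$ because $v\notin[x_1,x_2]$, and deducing $(v,y]\sub U_1\cap U_2$. This is a clean alternative and buys you a little more symmetry in the argument. The other recurring deviation is that in \ref{item:vt3}, \ref{item:vt9}, and \ref{item:vt10} you lean on the identity $\partial U=\partial\overline U$ (equivalently $X\setminus U\sub\partial X$), which you label ``immediate'' in \ref{item:vt9}. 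This inclusion is not a point-set triviality (in general $\inte\overline U$ can be strictly larger than $U$); it does hold here, but only via \ref{item:vt2} and the double-point property, since every neighborhood of $v\in\partial U$ meets the branch of $v$ disjoint from $\overline U$. You do spell this out in \ref{item:vt3} but then invoke it silently later; make that reuse explicit, or adopt the paper's routes, which avoid the issue: \ref{item:vt3} from $X\ne\qT$ plus connectedness of $\qT$, the first half of \ref{item:vt9} from \ref{item:vt4} and \ref{item:vt5}, and \ref{item:vt10} from \ref{item:vt9}. Finally, in \ref{item:vt9} your phrase ``only the one\ldots can lie inside $X$'' gives uniqueness but not existence; existence needs \ref{item:vt8} (since $v\in X$ is covered by the $(n+1)$-tiles contained in $X$), which you should say.
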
 
\begin{proof}
\ref{item:vt1} If $X$ is an $n$-tile, then $X=\overline U$, where 
$U$ is a component of $\qT\setminus \V^n$. It then follows from 
Lemma~\ref{lem:top_T}~\ref{item:top_T2} that  $X=\overline U$ is a subtree of $\qT$ with $\partial X=\partial \overline U\sub \partial U\sub \V^n$.

\smallskip \ref{item:vt2} Again we have $X=\overline U$, where
  $U$ is a component of $\qT\setminus \V^n$. Moreover,  
  $v\in \V^n$ is a double point of 
$\qT$, and so  there exist precisely two components 
$W_1$ and $W_2$ of $\qT\setminus \{v\}$. Since $U$ is a connected subset of $\qT\setminus \V^n\sub \qT \setminus \{v\}$, it is contained in one of these components, say $U\sub W_1$. 
Then $X=\overline U\sub \mybar{W}_1=W_1\cup \{v\}$, and $X$ is disjoint from $W_2=\qT\setminus   \mybar{W}_1. $



\smallskip \ref{item:vt3}
We have   $\V^n\neq \emptyset$ and so it  follows from \ref{item:vt2}
that $X\ne \qT$. Since $\qT$ is connected, this implies that $\partial X\ne \emptyset$; otherwise,  the non-empty set $X\ne \qT$ would be an open and closed  subset of the connected space $\qT$. This is impossible.

\smallskip \ref{item:vt4} This immediately follows from 
Lemma~\ref{lem:top_T}~\ref{item:top_T3}.

\smallskip \ref{item:vt5} The point $v\in \V^n$ is a double point of 
$\qT$. Hence there exist precisely two components 
$W_1$ and $W_2$ of $\qT\setminus \{v\}$. We have 
$\mybar{W}_1=W_1\cup\{v\}$ and $\mybar{W}_2=W_2\cup\{v\}$. Hence 
$v\in \mybar{W}_1\cap \mybar{W}_2$.

 Lemma~\ref{lem:top_T}~\ref{item:top_T1} implies that 
for all  points $x\in W_1$ and $y\in W_2$, we have $v\in [x,y]$. Since $v\in \mybar {W}_1\cap \mybar{W}_2$, we can choose $x$ and $y$ so close to $v$ that $[x,y]$ contains no other point  in $\V^n$. Then
$[x,v)$ is a connected subset of $\qT\setminus \V^n$, and so it must be contained in a component $U_1$ of $\qT\setminus \V^n$.
Hence  $X\coloneqq \mybar{U}_1$ is an $n$-tile that contains the arc $[x,v]$, and so $v\in X$. Similarly, $(v,y]$ is contained in a component $U_2$ of $\qT\setminus \V^n$, and $v$ is contained in the $n$-tile $Y\coloneqq \mybar {U}_2$. 

Since $v\in [x,y]$,  by Lemma~\ref{lem:top_T}~\ref{item:top_T1} the components $U_1$ and $U_2$  of $\qT\setminus \V^n$ containing $x$ and $y$, respectively, must be distinct.  So $U_1\ne U_2$, and these sets are disjoint. Since $U_1$ and $U_2$ are open by  Lemma~\ref{lem:top_T}~\ref{item:top_T2}, 
the sets $X=\mybar{U}_1 $ and $U_2$ are also disjoint. Since 
$\emptyset \ne U_2\sub \mybar{U}_2 =Y$,  we conclude that $X=\mybar{U}_1\ne \mybar{U}_2=Y$. So $v$ is contained in at least two distinct $n$-tiles $X$ and $Y$. 
 
 Suppose $Z=\overline U$ is another $n$-tile with $v\in Z$, where $U$ is a component of $\qT\setminus \V^n$. A point $z\in U$  
 must be contained in one of the components $W_1$ or $W_2$ of   
 $\qT\setminus \{v\}$, say $z\in W_1$. Then $[x,z]\sub \qT\setminus \{v\}$ by Lemma~\ref{lem:top_T}~\ref{item:top_T1}. We may assume that $x$ and $z$ are so close to $v$ that  $[x,z]$ contains no point in $\V^n\setminus\{v\}$. Then $[x,z]\cap \V^n=\emptyset$, and so $x$ and $z$ are contained in the same component of  
 $\qT\setminus \V^n$. It follows that $U=U_1$, and so 
 $X=\mybar{U}_1=\overline {U}=Z$. This shows that $X\ne Y$ are the only $n$-tiles that contain $v$. So $v$ is contained in precisely two distinct $n$-tiles.

 \smallskip \ref{item:vt6}  Each $n$-tile contains an $n$-vertex
 as follows from \ref{item:vt1} and  \ref{item:vt3}, and each
 vertex is contained in precisely two $n$-tiles by
 \ref{item:vt5}. This implies that there are at most twice as many $n$-tiles as $n$-vertices. In particular, the number of $n$-tiles is finite,
because the set $\V^n$ of $n$-vertices is finite. Actually, a more careful argument shows that the number of $n$-tiles  exceeds the number of $n$-vertices by exactly one, but we will not need this stronger result.

\smallskip \ref{item:vt7} If $X'$ is an $(n+1)$-tile, then there exists 
a component $W$ of $\qT\setminus \V^{n+1}$ with $\overline W=X'$. Since $\V^n\sub \V^{n+1}$, the set $W$ is a connected subset of $\qT\setminus \V^{n}$ and so contained in a unique component $U$ of $\qT\setminus \V^{n}$. Then 
$X'$ is contained in the $n$-tile $X\coloneqq \overline U$, because 
$ X'=\overline W\sub \overline U=X.$  
There can be no other $n$-tile containing $X'$, because by 
 \ref{item:vt1} the set $X'$ is a subtree of $\qT$ and hence an infinite set, but  distinct $n$-tiles can have at most one point in common by   \ref{item:vt4}.

\smallskip \ref{item:vt8} If $X$ is an $n$-tile, then $X=\overline U$, where $U$ is a component of $\qT\setminus \V^{n}$. Since $U$ is connected, this set cannot contain isolated points. This implies that 
the set $U\setminus \V^{n+1}\sub X$ is dense in $U$ and hence also dense in $X=\overline U$. 

If $x\in U\setminus \V^{n+1}$ is arbitrary, then there exists a component $W$ of 
$\qT\setminus \V^{n+1}$ with $x\in W$. Since  $W$ is a connected subset of $\qT\setminus \V^{n+1}\sub \qT\setminus \V^{n}$, 
this set must be contained in a component of $\qT\setminus \V^{n}$. Since $x\in U\cap W$, it follows that $W\sub U$. 
Then $X'\coloneqq \overline W$ is an $(n+1)$-tile with $x\in X'$ and $X'= \overline W\sub \overline U=X$. 

This shows that if we denote by $Y$ the union of all $(n+1)$-tiles $X'\sub X$, then $Y\sub X$  contains the set $U\setminus \V^{n+1}$. By  \ref{item:vt6} there are only finitely many $(n+1)$-tiles, and so $Y$ is closed. Since 
$U\setminus \V^{n+1}$ is dense in $X$ and $U\setminus \V^{n+1}
\sub Y$, it follows that $X=Y$ as desired.

\smallskip \ref{item:vt9} By \ref{item:vt5} there exists precisely one $n$-tile $Y$ distinct from $X$
with $v\in Y$. We then have $\{v\}=X\cap Y$ and $v\in \partial X$ by \ref{item:vt4}.
By \ref{item:vt8} there exist $(n+1)$-tiles $X'\sub X$ and $Y'\sub Y$ with $v\in X'\cap Y'$. Since $X$ and $Y$ have only the point $v$ in common, it follows that  $X'\ne Y'$ and that $Y'$ is not a subset of $X$. Since $v\in \V^n\sub \V^{n+1}$ is also an $(n+1)$-vertex, 
 \ref{item:vt5} implies that $X'$ and $Y'$ are the only $(n+1)$-tiles that contain 
 $v$. In particular, $X'$ is the unique $(n+1)$-tile with $v\in X'\sub X$.

 \smallskip \ref{item:vt10}
Suppose that $\partial X= \{v\}\sub \V^n$. We have $X=\overline U$,
where $U$ is a component of $\qT\setminus \V^n$. As we have seen in the proof of \ref{item:vt2}, there  is a component $W$ of
$\qT\setminus\{v\}$ with $U\subset W$.  We claim that $U=W$. 
 
 To see this, we argue by contradiction and assume that $U\ne W$.
 Then there exists a point $x\in U\sub W$, as well as a point 
 $y\in W\setminus U$. Hence $[x,y]\cap 
 \V^n\ne\emptyset$, because otherwise $y\in U$. 
 So as we travel from $x$ to $y$ along $[x,y]$, there must be a first point $u\in [x,y]$ that belongs to $\V^n$. Then $[x,u)\sub U$, and so $[x,u]\sub \overline U=X$. By  \ref{item:vt9}  the $n$-vertex 
 $u\in X$ is a boundary point of $X$.
 Hence $u\in \partial X=\{v\}$  and so $u=v$. 
 Since $v=u\in [x,y]$,    
  Lemma~\ref{lem:top_T}~\ref{item:top_T1} implies that $x$ and $y$ lie in different components of $\qT\setminus \{v\}$. Since $x$ and $y$ lie in the same component $W$ of $\qT\setminus \{v\}$, this is a contradiction. We see that $U=W$ and so 
  $X=\overline U=\overline W$ as desired. 
 \end{proof}

We now discuss some metric properties of vertices and tiles. 
Since 
$\V^n$ consists of $\delta^n$-separated points,  for 
distinct $u,v\in \V^n$ we have
\begin{equation}
  \label{eq:Vn_sepa}
  d(u,v) \ge \delta^n. 
\end{equation}

For  each  $n$-tile
$X^n$  we have 
\begin{align}
  \label{eq:Xn_dn}
  &\diam(X^n)\asymp  \delta^n, \text{ or more precisely }
  \\ \notag
  &\delta^n \leq \diam(X^n) \leq 3\beta \delta^n.
\end{align}
Indeed, the upper bound follows from
Proposition~\ref{prop:maxdouble}. 

To see that the lower bound is also true, first note that 
  $ \emptyset\ne \partial X^n\sub \V^n$ by
  Lemma~\ref{lem:vt}~\ref{item:vt1} and \ref{item:vt3}. 
  If  $\partial X^n$ is a singleton set $\{v\}\sub \V^n$, then $X^n$ is equal
to the closure of one of the two components of $\qT\setminus
\{v\}$ by Lemma~\ref{lem:vt}~\ref{item:vt10}. Since $v$ satisfies \eqref{eq:double_Delta}, we have
\begin{equation*}
  \diam (X^n)
  \ge
  D_{\qT}(v)
  \ge
  \beta \delta^n
  \ge
  \delta^n,  
\end{equation*}
as desired (recall that $\beta\ge 1$).   

If $\partial X^n$ contains two distinct points in $\V^n$, we
obtain the lower bound in \eqref{eq:Xn_dn} from \eqref{eq:Vn_sepa}.

We have good separation of  $n$-tiles  in the
following sense. If $X^n,Y^n\in \X^n$ are disjoint $n$-tiles,
then
\begin{equation}
  \label{eq:distXY}
  \dist(X^n,Y^n)\ge  \delta^n. 
\end{equation}
To see this, pick points $x\in X\coloneqq X^n$ and
$y\in Y\coloneqq Y^n$ such that $d(x,y)=\dist(X, Y)$. As we
travel from $x$ to $y$ along the arc $[x,y]$, we must meet the
sets $\partial X$ and $\partial Y$ because $X$ and $Y$ are
disjoint. Suppose $u\in [x,y]\cap \partial X$ and
$v\in [x,y]\cap \partial Y$. Then $u$ and $v$ are distinct
$n$-vertices and it follows that
\begin{equation*}
  \dist (X,Y)=d(x,y)=\diam\,[x,y]\ge d(u,v)\ge \delta^n,
\end{equation*} 
as desired.    
 
Since each point $v\in\V^n$ is a $(\beta, \ga)$-good double point
at scale $\delta^n$, by \eqref{eq:double_Delta}  we have 
\begin{equation}
  \label{eq:diamTv}
  D_{\qT}(v) \geq \beta \delta^n,
\end{equation}
and so the components of $\qT\setminus \{v\}$ are large.

Each  $n$-vertex $v$ stays away from the branch points of $\qT$
in a controlled way. More precisely, by  \eqref{eq:d_x_branch} for each branch point
$b$ of $\qT$ we have
\begin{equation}
  \label{eq:dist_vb}
  d(v,b)\geq \ga \min\{H_{\qT}(b), \delta^n\}.
\end{equation}

Finally, for our later discussion it is  convenient to  set $\V^0=\emptyset$ and regard $X^0\coloneqq \qT$ as the only $0$-tile. Then $\X^0=\{\qT\}$. Clearly
\eqref{eq:Xn_dn} is still true. 

\addtocontents{toc}{\SkipTocEntry}
\subsection*{Chains}
\label{sec:geometry-tiles-child}  


An \emph{$n$-chain} for $n\in \N_0$ is a finite non-empty sequence $P$ of $n$-tiles
$X_1,\dots,X_r$ with $X_i\cap X_{i+1}\neq \emptyset$ for
$i=1,\dots, r-1$. Again we call $P$ simply a \emph{chain} if its
level $n$ is clear from the context.
We call $r\in \N$ the \emph{length} of $P$.
The chain $P$ {\em joins} the points $x,y\in \qT$ if $x\in X_1$
and $y\in X_r$. It is \emph{simple} if
$X_i\ne  X_{i+1}$ for $i=1,\dots, r-1$ and 
 $X_i\cap X_j= \emptyset$ for $\abs{i-j} \geq 2$. The tiles in a simple chain $P$ are all distinct. 
 
  Given two
distinct points $x,y\in \qT$, we say that $P$ is a \emph{simple
  $n$-chain joining $x$ and $y$} if $P$ is simple, $X_1$ is
the only $n$-tile in $P$ containing $x$, and $X_r$ is the only
$n$-tile in $P$ containing $y$ (note that these requirements are stronger than saying that $P$ is simple and that $P$ joins $x$ and $y$, because the latter two conditions allow $x\in X_1\cap X_2$).

We use the notation  $\abs{P}\coloneqq\bigcup_{i=1}^r
X_i$. We say that $P$ {\em contains} a point $x$, if $x\in \abs{P}$.
Another $n$-chain $Q$ is called a {\em subchain} of $P$ if the sequence of $n$-tiles in $Q$  is obtained by deleting some of the tiles in $P$ while  keeping the order of the remaining tiles.

\begin{lemma}
  \label{lem:ex_chain_unique}
  Let $n\in \N_0$ and $x,y\in \qT$ be distinct points.  Then the
  following statements are true:
  \begin{enumerate}
  \item
    \label{item:chain1}
    There exists a unique simple $n$-chain $P$ joining $x$ and
    $y$. 
  \item
    \label{item:chain2}
    If $P$ is the simple $n$-chain and  $\widetilde{P}$ is another $n$-chain
     joining $x$ and
    $y$, then 
    $\abs{P}\subset \abs{\widetilde{P}}$. More precisely, every
    $n$-tile in $P$ also belongs to $\widetilde{P}$. 
  \end{enumerate}
\end{lemma}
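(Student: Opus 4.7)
My plan is to establish \ref{item:chain1} (existence) by directly reading the chain off the arc $[x,y]$, then to prove \ref{item:chain2}, and finally to deduce uniqueness in \ref{item:chain1} from \ref{item:chain2}. For existence, I would set $\alpha \coloneqq [x,y]$ and note that $S \coloneqq \alpha \cap \V^n$ is a finite set, so $\alpha \setminus S$ decomposes into finitely many open subarcs $I_1, \ldots, I_r$ listed in order from $x$ to $y$ (in the case $n=0$, $S=\emptyset$ and $r=1$). Each $I_j$ is a connected subset of $\qT \setminus \V^n$ and hence lies in a unique component $U_j$ of $\qT \setminus \V^n$; setting $X_j \coloneqq \overline{U_j}$ gives the candidate chain $P = (X_1, \dots, X_r)$. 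The vertex $v_j \in S$ separating $I_j$ from $I_{j+1}$ lies in both $\overline{I_j} \subset X_j$ and $\overline{I_{j+1}}\subset X_{j+1}$, giving consecutive intersection. For non-adjacent tiles $X_i$ and $X_j$ with $|i-j|\ge 2$, I would pick a separating vertex $v'\in S$ strictly between $I_i$ and $I_j$; by Lemma~\ref{lem:vt}~\ref{item:vt2}, $X_i$ and $X_j$ are contained in closures of opposite components of $\qT\setminus\{v'\}$, while Lemma~\ref{lem:vt}~\ref{item:vt5} says $v'$ itself belongs only to the two tiles of $P$ adjacent to it, not to $X_i$ or $X_j$. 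Consequently $X_i \cap X_j = \emptyset$. The endpoint condition $x \in X_1$ holds because $x \in \overline{I_1}\subset X_1$; and if $x$ belonged to some $X_j$ with $j>1$, then $x \in \V^n$ (by Lemma~\ref{lem:vt}~\ref{item:vt5}), and the ``other'' tile at $x$ would lie on the component of $\qT\setminus\{x\}$ opposite to $I_1$, which is incompatible with that tile containing $I_j\subset \alpha$. The analogous argument at $y$ finishes existence.

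For \ref{item:chain2}, I will use that $\abs{\widetilde P}$ is a connected set (consecutive tiles of a chain intersect, and each tile is itself connected) containing both $x$ and $y$, so in the tree $\qT$ it must contain the whole arc $\alpha$. Thus each open piece $I_i$ is covered by the tiles of $\widetilde P$. Any $p_i \in I_i$ lies in $U_i$, an open set disjoint from every tile other than $X_i$; hence the tile of $\widetilde P$ that contains $p_i$ must coincide with $X_i$, and so $X_i$ appears in $\widetilde P$.

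For the uniqueness in \ref{item:chain1}, suppose $P' = (Y_1, \ldots, Y_s)$ is another simple chain joining $x$ and $y$. Applying \ref{item:chain2} in both directions, together with the distinctness of tiles in a simple chain, yields $\{Y_1, \ldots, Y_s\} = \{X_1, \ldots, X_r\}$ and $r = s$. Since $X_1 \in P'$ contains $x$ and $Y_1$ is the unique tile of $P'$ containing $x$, we get $Y_1 = X_1$; similarly $Y_r = X_r$. An induction on $j$ will then close the argument: granted $Y_{j-1} = X_{j-1}$, the tile $Y_j$ must both meet $X_{j-1}$ and lie in $\{X_j, \ldots, X_r\}$, and simplicity of $P$ (namely $X_{j-1} \cap X_i = \emptyset$ for $i \ge j+1$) forces $Y_j = X_j$. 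The main technical delicacy, and the step I expect to require the most care, is the bookkeeping around the endpoints $x$ and $y$ when they lie in $\V^n$, which is handled via Lemma~\ref{lem:vt}~\ref{item:vt5}.
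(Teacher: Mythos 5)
Your overall strategy — reading the chain off $[x,y]$ by cutting at the $n$-vertices, then proving \ref{item:chain2} from the fact that $\abs{\widetilde P}$ must cover $[x,y]$ while only $X_i$ can contain points of the open piece $I_i$, and finally deducing uniqueness — is the same as the paper's, and the existence and \ref{item:chain2} parts are correct (modulo the small imprecision that for $|i-j|=2$ the separating vertex does belong to one of the two tiles, though the conclusion $X_i\cap X_j=\emptyset$ still holds because it avoids the other).

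There is, however, a genuine circularity in your uniqueness step. You want to conclude $\{Y_1,\dots,Y_s\}=\{X_1,\dots,X_r\}$ by ``applying \ref{item:chain2} in both directions,'' but the reverse inclusion $\{Y_j\}\subset\{X_i\}$ would require applying \ref{item:chain2} with $P'$ in the role of ``the'' simple chain. The proof of \ref{item:chain2} rests on the explicit description of the constructed chain (each $X_i$ contains the piece $I_i$ of $[x,y]\setminus\V^n$); you do not know $P'$ has this structure until uniqueness is established, which is what you are trying to prove. Your induction step then relies on the unjustified claim $Y_j\in\{X_j,\dots,X_r\}$. The repair is to track the $X_j$'s inside $P'$ rather than the other way around: given $Y_i=X_i$ for $i<j$, the tile $X_j$ lies in $P'$ by the one available direction of \ref{item:chain2} and meets $Y_{j-1}=X_{j-1}$; since $P'$ is simple, the only tiles of $P'$ meeting $Y_{j-1}$ are $Y_{j-2},Y_{j-1},Y_j$, and distinctness of the $X_i$'s rules out the first two, so $X_j=Y_j$. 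Then $y\in X_r=Y_r$ forces $Y_r$ to be the last tile of $P'$, so $r=s$ and $\{Y_j\}=\{X_i\}$ follows as a consequence rather than a premise. This is the order-tracking argument the paper itself uses.
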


We will often use the notation $P^n_{xy}$ for the unique simple
$n$-chain joining the points $x,y\in \qT$, $x\ne y$.

\begin{proof}[Proof of Lemma~\ref{lem:ex_chain_unique}]
  Let $x,y\in \qT$ with $x\ne y$ be arbitrary. We will exhibit an algorithm that produces a simple $n$-chain $P$ joining $x$ and $y$, and we will see that $P$ is the unique such $n$-chain. 
  
Let $x<v_1< \dots < v_{r-1}<y$ with $r\in \N$ be the
distinct $n$-vertices in $(x,y)$
arranged in the natural order $<$ on $[x,y]$ (obtained by identifying $[x,y]$ with the unit interval $[0,1]$). This list can  be empty (then $r=1$). We set $v_0\coloneqq x$ and $v_{r}\coloneqq y$. Then for $i=1, \dots, r$ the open arc $(v_{i-1}, v_{i})\sub [x,y]$  is a connected set in the complement of the set of $n$-vertices 
in $\qT$. Therefore, there exists a unique $n$-tile $X_i$
with $(v_{i-1}, v_{i})\sub X_i$. Then $[v_{i-1}, v_i]\sub X_i$, because $X_i$ is a closed set. For $i=1, \dots, r-1$ the $n$-vertex $v_i$ separates the sets 
$(v_{i-1}, v_{i})$ and $(v_{i}, v_{i+1})$, and so these sets must lie in different  components of $\qT\setminus\{v_i\}$ by Lemma~\ref{lem:top_T}~\ref{item:top_T1}. Closures of such components can have at most the point $v_i$ in common as follows 
from Lemma~\ref{lem:top_T}~\ref{item:top_T3}. This implies that
$X_i\cap X_{i+1}=\{v_i\}$. If $1\le i<j\le r$ and $j-i\ge 2$,
then a similar argument using a point $p\in [x,y]$ with $v_i<p<
v_{i+1}$ shows that $X_i\cap X_j=\emptyset$.  Based on this discussion one can now easily check that   the $n$-tiles $X_1, \dots, X_r$ form  a simple $n$-chain $P$ joining $x$ and $y$. 

Now suppose $\widetilde P$ is another $n$-chain joining $x$ and $y$.  Then $\abs{\widetilde{P}}$ is a path-connected set
  containing $x$ and $y$, and thus $[x,y]\subset
  \abs{\widetilde{P}}$.  In particular, 
$(v_{i-1},v_i)\subset
\abs{\widetilde{P}}$ for $i=1, \dots, r$.
%
%
Since $X_i$ contains $(v_{i-1},v_i)$ and all other $n$-tiles are
disjoint from $(v_{i-1},v_i)$ as follows from
Lemma~\ref{lem:vt}~\ref{item:vt4}, $X_i$ must be one of the $n$-tiles in $\widetilde{P}$.
Statement \ref{item:chain2} follows. 
   
Finally, to show uniqueness of $P$, assume that
$\widetilde{P}$
is a simple $n$-chain joining $x$ and $y$. Since $x=v_0\in X_1$ and $X_1$ belongs to $\widetilde{P}$, the $n$-tile $X_1$ must be the first  tile in $\widetilde{P}$. Since $X_2\ne X_1$ (in case $r\ge 2$) belongs to 
$\widetilde{P}$ and $X_1\cap X_2=\{v_1\}\ne \emptyset$, the $n$-tile $X_2$ must be the second tile in $\widetilde{P}$. Continuing in this manner, we see that the tiles in $\widetilde{P}$ are given by $X_1, \dots, X_r$. So  $\widetilde{P}=P$ and the uniqueness of $P$ follows. 
 \end{proof}

We can  construct simple $(n+1)$-chains from simple $n$-chains. 

\begin{lemma}
  \label{lem:subdiv_nchain}
  Let $n\in \N_0$ and  $x,y\in \qT$ be distinct points. Suppose the simple $n$-chain $P$ joining $x$ and $y$ consists of the $n$-tiles  $X_1,\dots,X_r$, where $r\in \N$. For $i=1,\dots,r-1$ let $v_i$ be the unique
  $n$-vertex in $X_{i}\cap X_{i+1}$, and let
  $v_0=x$ and $v_r=y$. If for $i=1, \dots , r$ we denote by $P'_i$ the simple
  $(n+1)$-chain joining $v_{i-1}$ and $v_i$, then the following
  statements 
  are true:
  \begin{enumerate}
     \item 
    \label{item:subdiv_nchain1}
    The simple $(n+1)$-chain $P'$ joining $x$ and $y$ is obtained by 
    concatenating  $P'_1,\dots,P'_r$.  
  \item 
    \label{item:subdiv_nchain2}
 The chain $P'_{i}$ consists precisely of all $(n+1)$-tiles
    $X'\sub X_i$ such that  $X'\cap (v_{i-1},v_i)\ne \emptyset$.  
    \end{enumerate}
\end{lemma}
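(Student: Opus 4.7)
The plan is to prove \ref{item:subdiv_nchain2} first, then deduce \ref{item:subdiv_nchain1} by concatenation together with the uniqueness asserted in Lemma~\ref{lem:ex_chain_unique}\ref{item:chain1}.

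For \ref{item:subdiv_nchain2}, first note that $X_i$ is a subtree of $\qT$ by Lemma~\ref{lem:vt}\ref{item:vt1} containing both $v_{i-1}$ and $v_i$, so $[v_{i-1},v_i]\subset X_i$. The algorithm in the proof of Lemma~\ref{lem:ex_chain_unique}, applied to the pair $v_{i-1},v_i$, describes $P'_i$ as the sequence of $(n+1)$-tiles covering the consecutive open subarcs of $[v_{i-1},v_i]$ cut out by the $(n+1)$-vertices in $(v_{i-1},v_i)$. In particular, each tile in $P'_i$ meets $(v_{i-1},v_i)$. Moreover, since the $v_j$ enumerate all $n$-vertices on $[x,y]$, the open arc $(v_{i-1},v_i)$ contains no $n$-vertex, so any point of this arc lies in the interior of a unique $n$-tile, which must be $X_i$. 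By Lemma~\ref{lem:vt}\ref{item:vt7}, the unique $n$-tile containing an $(n+1)$-tile $X'$ that meets $(v_{i-1},v_i)$ is therefore $X_i$. Conversely, if $X'\subset X_i$ is an $(n+1)$-tile meeting $(v_{i-1},v_i)$ at a point $p$, then either $p$ lies in one of the open subarcs between consecutive $(n+1)$-vertices (so $X'$ is the unique $(n+1)$-tile containing this subarc and hence appears in $P'_i$), or $p$ is itself an $(n+1)$-vertex, in which case Lemma~\ref{lem:vt}\ref{item:vt5} identifies $X'$ with one of the two $(n+1)$-tiles containing $p$, both of which appear in $P'_i$.

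For \ref{item:subdiv_nchain1}, let $Q$ be the concatenation of $P'_1,\dots,P'_r$. Since the last tile of $P'_i$ and the first tile of $P'_{i+1}$ both contain $v_i$, $Q$ is an $(n+1)$-chain. To check that $Q$ is simple, consider tiles $X'\in P'_i$ and $X''\in P'_j$ with $i\le j$; by \ref{item:subdiv_nchain2} we have $X'\subset X_i$ and $X''\subset X_j$. If $j-i\ge 2$, then $X_i\cap X_j=\emptyset$ by simplicity of $P$, and hence $X'\cap X''=\emptyset$. If $j=i+1$, then $X_i\cap X_{i+1}=\{v_i\}$, so $X'\cap X''\subset\{v_i\}$; Lemma~\ref{lem:vt}\ref{item:vt5} shows that $v_i\in\V^n\subset\V^{n+1}$ belongs to exactly two $(n+1)$-tiles, which must be the last tile of $P'_i$ and the first tile of $P'_{i+1}$, so any other pair $X',X''$ drawn from $P'_i$ and $P'_{i+1}$ is disjoint. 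An analogous argument shows that the first tile of $Q$ is the only tile in $Q$ containing $x=v_0$, and likewise for $y=v_r$. Therefore $Q$ is a simple $(n+1)$-chain joining $x$ and $y$, and the uniqueness clause of Lemma~\ref{lem:ex_chain_unique}\ref{item:chain1} gives $Q=P'$.

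The main technical obstacle is verifying simplicity of $Q$ at the junctions between consecutive $P'_i$; this reduces to the fact, already used in \ref{item:subdiv_nchain2}, that an $(n+1)$-vertex is contained in exactly two $(n+1)$-tiles, so the $n$-tile decomposition cleanly separates the subchains.
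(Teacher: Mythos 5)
Your proof is correct. The organizational route differs slightly from the paper's: you prove~\ref{item:subdiv_nchain2} first, then derive~\ref{item:subdiv_nchain1} by showing that the concatenation $Q$ of the $P'_i$ is itself a \emph{simple} $(n+1)$-chain joining $x$ and $y$ and invoking the uniqueness clause of Lemma~\ref{lem:ex_chain_unique}\ref{item:chain1}, whereas the paper proves~\ref{item:subdiv_nchain1} directly by the hierarchical labeling $v_i^j$ of all $(n+1)$-vertices on $[x,y]$ (exploiting $\V^n\sub\V^{n+1}$), which exhibits $P'$ as the explicit concatenation from the algorithm in Lemma~\ref{lem:ex_chain_unique}, and only then deduces~\ref{item:subdiv_nchain2}. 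Both arguments rest on the same underlying facts --- the vertex-enumeration algorithm, Lemma~\ref{lem:vt}\ref{item:vt4}, \ref{item:vt5}, \ref{item:vt7}, and the fact that $(v_{i-1},v_i)$ contains no $n$-vertex. The paper's explicit labeling makes the verification of~\ref{item:subdiv_nchain1} almost automatic, at the cost of some notational overhead. Your route instead requires checking simplicity of $Q$ by hand; your treatment of that point --- using~\ref{item:subdiv_nchain2} to show that nontrivial intersections $X'\cap X''$ only occur in $\{v_i\}$ and then applying Lemma~\ref{lem:vt}\ref{item:vt5} --- is sound. (One very minor nit: when you assert that a point of $(v_{i-1},v_i)$ ``lies in the interior of a unique $n$-tile,'' the relevant fact is simply that a non-vertex point lies in at most one $n$-tile by Lemma~\ref{lem:vt}\ref{item:vt4}; ``interior'' is not the right word and is not needed.)
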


\begin{proof}
 As in the proof of
  Lemma~\ref{lem:ex_chain_unique}, let   $v_1< \dots< 
v_{r-1}$ be  the distinct $n$-vertices in $(x,y)$ arranged in the natural order $<$ on $[x,y]$.  Then  $X_i$ is the unique $n$-tile that contains $[v_{i-1}, v_i]$ for $i=1, \dots, r$, and we have 
$\{v_i\}=X_i \cap X_{i+1}$ for $i=1, \dots, r-1$.     

We can find  the simple $(n+1)$-chain $P'$ joining $x$ and $y$ by  arranging the $(n+1)$-vertices in $(x,y)$ in the order $<$. Since the $n$-vertices $v_1, \dots, v_{r-1}$ are also $(n+1)$-vertices, they will be among  these $(n+1)$-vertices 
in $(x,y)$. 
This means that  we may assume that all the  
 $(n+1)$-vertices in $(x,y)$ are labeled $v_i^j$ so that  
\begin{align*}x&< v_1^1< \dots <v_1^{s_1-1}< v_1^{s_1}=v_1=v_2^0< \dots< v_2^{s_2}=v_2\\
&=v_3^0 <\dots< v_{r-1}^{s_{r-1}}= v_{r-1}= v_{r}^0< \dots< v_{r}^{s_r-1}<y. 
\end{align*} 
Here $s_1, \dots, s_r\in \N$. We set $v_1^0=x$ and $v_r^{s_r}=y$. 
Then the argument in the proof of  Lemma~\ref{lem:ex_chain_unique} shows that 
for $i=1, \dots, r$ and $j=1, \dots, s_i$ there exists a unique
$(n+1)$-tile $X_i^j$ such that $[v_i^{j-1}, v_i^j]\sub X_i^j$. 
Moreover, the  simple $(n+1)$-chain $P'$ joining $x$ and $y$ is given by 
$$ X_1^1, \dots, X_1^{s_1}, X_2^1, \dots, X_2^{s_2}, \dots,
X_r^1, \dots, X_r^{s_r}.$$  
It is also clear that for $i=1, \dots, r$ the simple $(n+1)$-chain 
 $P_i'$ joining $v_{i-1}=v_{i}^0$ and $v_i=v_{i}^{s_i}$ is given by 
$X_i^1, \dots, X_i^{s_i}$, because $v_i^1< \dots< v_i^{s_i-1}$ 
are all the $(n+1)$-vertices in $(v_{i-1}, v_i)=(v_i^0, v_i^{s_i})\sub (x,y)$.  This implies that $P'$ is the concatenation of the chains $P'_1,\dots,P'_r$.  Statement 
  \ref{item:subdiv_nchain1} follows.

To see \ref{item:subdiv_nchain2}, we fix $i\in \{1, \dots, r\}$.
Let  $X_i^j$ with $j\in \{1, \dots, s_i\}$ be an $(n+1)$-tile from the 
simple $(n+1)$-chain 
 $P'_i$ joining $v_{i-1}$ and $v_i$. Then 
 $X_i^j$ is contained in a unique $n$-tile by 
 Lemma~\ref{lem:vt}~\ref{item:vt7}. 
 Note that $(v_i^{j-1}, v_i^j)\sub X_i^j$ and $(v_i^{j-1}, v_i^j) \sub (v_{i-1}, v_i)$; so $X_i^j$ contains points in $(v_{i-1}, v_i)$. Since 
 $(v_{i-1}, v_i)\sub X_i$ and $(v_{i-1}, v_i)$  contains no $n$-vertices, Lemma~\ref{lem:vt}~\ref{item:vt4} implies  all  $n$-tiles except $X_i$ are disjoint from  $(v_{i-1}, v_i)$. We conclude that 
 $X_i^j\sub X_i$. This shows that  $P'_i$ consists of $(n+1)$-tiles that are contained in $X_i$ and meet  $(v_{i-1}, v_i)$. 
 
 Conversely, suppose $Z\sub X_i$ is an $(n+1)$-tile with 
 $Z\cap (v_{i-1}, v_i)\ne \emptyset$. Since 
 $$(v_{i-1}, v_i)=(v_i^0, v_i^1)\cup \{v_i^1\}\cup (v_i^1, v_i^2)\cup \dots \cup 
 \{v_i^{s_i-1}\} \cup (v_i^{s_i-1}, v_i^{s_i}),$$
 we then have $Z\cap (v_i^{j-1},v_i^j)\ne \emptyset $ for some $j\in \{1, \dots, s_i\}$ 
 or $v_i^j\in Z$ for  some $j\in \{1, \dots, s_i-1\}$.
 
 In the first case, $Z=X_i^j$, because no $(n+1)$-tile except
 $X_i^j\supset (v_i^{j-1},v_i^j) $ contains points in  $(v_i^{j-1},v_i^j)$. This follows from Lemma~\ref{lem:vt}~\ref{item:vt4}, because no point in  $(v_i^{j-1},v_i^j)$ is an
  $(n+1)$-vertex. 
 
 In the second case, when $v_i^j\in Z$ for  some $j\in \{1, \dots, s_i-1\}$, we  have $Z=X_i^j$ or $Z=X_i^{j+1}$, because 
 $X_i^j$ and  $X_i^{j+1}$ are the only $(n+1)$-tiles that contain 
 the $(n+1)$-vertex $v_i^j$ (this follows from Lemma~\ref{lem:vt}~\ref{item:vt5}).
 
 In any case, $Z$ is one of the tiles in the chain $P'_i$, and statement     \ref{item:subdiv_nchain2} follows. 
 \end{proof}

\addtocontents{toc}{\SkipTocEntry}
\subsection*{Choosing $\delta$}
\label{sec:fixing-delta}

We now are going to choose the parameter $\delta>0$ used in the definition of vertices and tiles small enough so that $(n+1)$-tiles
are contained in $n$-tiles in a ``controlled way''. The choice of $\delta$ will depend on the  
constants $\beta$ and $\gamma$ fixed at the beginning of this section.
Recall that  we imposed the 
preliminary condition  $0<\delta<1/(3\beta)$ for the definition of tiles and vertices. The ultimate  choice of $\delta$ will be discussed after the proof of Lemma~\ref{lem:branch_not_partial}.

\begin{lemma}
  \label{lem:delta_small_tiles_child}
  If $0<\delta<1/(3\beta)$ is sufficiently small only depending on the
  doubling constant $N$ of $\qT$, then the following statements are true for all   $n\in \N_0$: 
  \begin{enumerate}
  \item
    \label{item:X3child}
    Each $n$-tile $X$ contains at least three  $(n+1)$-tiles.
  \item 
    \label{item:delta_X_X1}
  If $u$ and $v$ are distinct $n$-vertices, then the simple
    $(n+1)$-chain joining $u$ and $v$ has length  $\ge 3$.
  \end{enumerate}
\end{lemma}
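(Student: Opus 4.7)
The plan is to exploit the two diameter estimates in \eqref{eq:Xn_dn}: every $n$-tile has diameter at least $\delta^n$, while every $(n+1)$-tile has diameter at most $3\beta\delta^{n+1}$. Choosing $\delta$ small (say $\delta \le 1/(9\beta)$) should force many $(n+1)$-tiles to fit into each $n$-tile and along each arc between two distinct $n$-vertices. Since $\beta=\beta(N)$, such a threshold depends only on $N$, as the lemma requires.

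For part~\ref{item:X3child} I would argue by contradiction. Suppose an $n$-tile $X$ contains at most two tiles of level $n+1$. By Lemma~\ref{lem:vt}~\ref{item:vt8}, $X$ is the union of the $(n+1)$-tiles it contains. In the one-tile case $X=X'_1$ and trivially $\diam(X)\le 3\beta\delta^{n+1}$. In the two-tile case $X=X'_1\cup X'_2$; since $X$ is connected by Lemma~\ref{lem:vt}~\ref{item:vt1} and both $X'_i$ are closed, they must meet, and the triangle inequality then gives $\diam(X)\le \diam(X'_1)+\diam(X'_2)\le 6\beta\delta^{n+1}$. Comparing either conclusion with the lower bound $\diam(X)\ge \delta^n$ from \eqref{eq:Xn_dn} (which also holds trivially for the unique $0$-tile $X=\qT$) would force $\delta\ge 1/(6\beta)$, contradicting the choice $\delta<1/(6\beta)$.

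Part~\ref{item:delta_X_X1} will be handled in the same spirit. Let $u\ne v$ be $n$-vertices and let $P'=(X'_1,\dots,X'_r)$ denote the simple $(n+1)$-chain joining them. The argument in the proof of Lemma~\ref{lem:ex_chain_unique} (applied at level $n+1$) shows that $[u,v]\subset |P'|$, and since consecutive tiles in a chain intersect, a straightforward induction on $r$ yields
\begin{equation*}
\diam\,[u,v] \;\le\; \sum_{i=1}^{r}\diam(X'_i) \;\le\; 3r\beta\delta^{n+1}.
\end{equation*}
On the other hand, $d$ is a diameter metric and $\V^n$ is $\delta^n$-separated, so $\diam\,[u,v]=d(u,v)\ge \delta^n$. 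Combining these yields $r\ge 1/(3\beta\delta)$, which is at least $3$ whenever $\delta\le 1/(9\beta)$. The case $n=0$ is vacuous because $\V^0=\emptyset$.

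I do not anticipate any genuine obstacle: both parts reduce to straightforward diameter comparisons and are handled simultaneously by taking, for instance, $\delta\le 1/(9\beta)$. The only mild care needed is to keep the two bounds in \eqref{eq:Xn_dn} correctly paired (the lower bound applied at level $n$, the upper bound at level $n+1$) and to check that they survive in the degenerate case $n=0$, which is immediate since $\diam(\qT)=1$.
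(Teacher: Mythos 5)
Your proof is correct and follows essentially the same approach as the paper: both compare the lower bound $\diam(X)\geq\delta^n$ from \eqref{eq:Xn_dn} (resp.\ the separation $d(u,v)\geq\delta^n$ from \eqref{eq:Vn_sepa}) against the upper bound $\diam(X')\leq 3\beta\delta^{n+1}$ for $(n+1)$-tiles. The only cosmetic difference is in part \ref{item:delta_X_X1}, where your uniform chain-length estimate yields the threshold $\delta\le 1/(9\beta)$ rather than the paper's sharper $\delta<1/(6\beta)$ (obtained by ruling out chains of length $1$ or $2$ directly); this is harmless since any threshold depending only on $N$ suffices, though it would slightly alter the explicit final choice $\delta=\tfrac12\min\{1/(6\beta),\gamma/(3\beta)\}$ made after Lemma~\ref{lem:branch_not_partial}.
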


It follows from the first statement  that then there are least three $1$-tiles. 
The second statement implies that each 
$(n+1)$-tile $X'$ contains at most one $n$-vertex.

\begin{proof}  Fix $n\in \N_0$. Then we know by \eqref{eq:Xn_dn}
 that  $\diam(X) \geq
  \delta^n$ for each $n$-tile $X$, and $\diam(X') \leq 3 \beta \delta^{n+1}$ for each $(n+1)$-tile $X'$. 
 It follows that 
  \ref{item:X3child} is true for $0<\delta< 1/(6\beta)$. 

If  $u$ and $v$ 
are distinct $n$-vertices, then 
  $d(u,v) \ge \delta^n$ by \eqref{eq:Vn_sepa}. Again we have
  $\diam(X') \le 3\beta \delta^{n+1}$ for each
  $(n+1)$-tile $X'$. Thus \ref{item:delta_X_X1} is also  true if 
  $0<\delta< 1/(6\beta)$.
\end{proof}

In the next lemma we consider the location of $(n+1)$-vertices in
an $n$-tile. In the proof we will invoke 
\eqref{eq:dist_vb} derived from \eqref{eq:d_x_branch}.   This is the ultimate reason   
why  we want the elements in $\V^n$ to satisfy \eqref{eq:d_x_branch} in addition to \eqref{eq:double_Delta}
(with $\Delta=\delta^n$).  A consequence will be the subsequent Lemma~\ref{lem:ch+verts}. It   guarantees that if we decompose an $n$-tile $X$ into $(n+1)$-tiles, then a simple chain of $(n+1)$-tiles joining two distinct points in $\partial X$ does not encounter other points in $\partial X$.    This in turn is behind the important estimate in Lemma~\ref{lem:length_vert}~\ref{item:w_lenth2}. It   prevents blow up 
of the  auxiliary distance functions $\varrho_n$ as $n\to \infty$ that we will use to define our desired geodesic metric $\varrho$ on $\qT$ and ultimately leads to the existence of the limit in \eqref{eq:linrnexists}.


\begin{lemma}
  \label{lem:branch_not_partial}
 If $0<\delta<1/(3\beta)$ is sufficiently small only depending on the
  doubling constant $N$ of $\qT$, then the following statement is  true.  Let $n\in \N$, 
  $X$ be an $n$-tile, $u \in \partial X\sub \V^n$, and $X'\sub X$ be the unique $(n+1)$-tile with $u\in X'$. Then there exists 
  an $(n+1)$-vertex $u'\in \partial X'\setminus\{u\}$  such that $[u,u']\sub [u,v]$ for all $v\in \partial X\setminus \{u\}$.   \end{lemma}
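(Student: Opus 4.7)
The plan is to define $u'$ as the point where the arc $[u,v]$ first exits the tile $X'$ (travelling from $u$), and to show that this exit point does not depend on the choice of $v \in \partial X \setminus \{u\}$.

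\textbf{Setup.} Since $u\in\V^n$ is a double point, $\qT\setminus\{u\}$ has two components $W_1,W_2$; by Lemma~\ref{lem:vt}~\ref{item:vt2} we may assume $X\subset\overline{W_1}$, and then the unique other $(n+1)$-tile containing $u$ lies in $\overline{W_2}$. A short argument combining Lemma~\ref{lem:vt}~\ref{item:vt8} with the finiteness of the set of $(n+1)$-tiles shows that $X'$ is a neighborhood of $u$ in $X$. Hence for every $v\in\partial X\setminus\{u\}$, the set $[u,v]\cap X'$ is a nondegenerate initial sub-arc $[u,u'(v)]$ of $[u,v]$; by Lemma~\ref{lem:delta_small_tiles_child}~\ref{item:delta_X_X1} the tile $X'$ contains $u$ as its only $n$-vertex, so $v\notin X'$ and $u'(v)\in\partial X'\setminus\{u\}$ is an $(n+1)$-vertex.

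\textbf{Independence of $v$.} Fix $v_1,v_2\in\partial X\setminus\{u\}$ and let $b^*$ be the point with $[u,v_1]\cap[u,v_2]=[u,b^*]$. Either $b^*=v_i$ for some $i$, in which case $d(u,b^*)\geq\delta^n$ by the $n$-vertex separation \eqref{eq:Vn_sepa}; or $b^*$ is a branch point of $\qT$ and $u,v_1,v_2$ lie in three distinct branches $B_u,B_{v_1},B_{v_2}$ of $b^*$. In the branch-point case, the closure of the component of $\qT\setminus\{u\}$ opposite from $b^*$ is contained in $B_u$ and has diameter $\geq D_\qT(u)\geq\beta\delta^n$ by \eqref{eq:diamTv}; analogously $\diam(B_{v_i})\geq D_\qT(v_i)\geq\beta\delta^n$. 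Hence $H_\qT(b^*)\geq\beta\delta^n\geq\delta^n$, and the good-double-point condition \eqref{eq:dist_vb} applied to $u$ yields $d(u,b^*)\geq\gamma\delta^n$. Choosing $\delta\leq\gamma/(3\beta)$ (a bound depending only on $N$, since $\beta,\gamma$ do) we obtain in both cases
\begin{equation*}
   d(u,b^*)\geq\gamma\delta^n>3\beta\delta^{n+1}\geq\diam(X'),
\end{equation*}
so $b^*\notin X'$ (otherwise $[u,b^*]\subset X'$ would force $d(u,b^*)\leq\diam(X')$). Consequently the common initial arc $[u,b^*]$ exits $X'$ at a single point $w\in\partial X'\setminus\{u\}$, which must equal both $u'(v_1)$ and $u'(v_2)$, since any exit point beyond $b^*$ would entail $b^*\in X'$. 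Therefore $u':=u'(v)$ is well-defined and satisfies $[u,u']=[u,v]\cap X'\subset[u,v]$ for every $v\in\partial X\setminus\{u\}$. (If $\partial X=\{u\}$ the claim is vacuous; one picks any $u'\in\partial X'\setminus\{u\}$, which is nonempty because $X'\subsetneq X$ by Lemma~\ref{lem:delta_small_tiles_child}~\ref{item:X3child} forces $\partial X'\neq\{u\}$.)

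\textbf{Main obstacle.} The only non-routine step is the lower bound $H_\qT(b^*)\geq\beta\delta^n$ in the branch-point case: this is exactly where the separation condition \eqref{eq:d_x_branch} built into the definition of a $(\beta,\gamma)$-good double point earns its keep. The large ``back sides'' of $u,v_1,v_2$ guaranteed by \eqref{eq:double_Delta} produce three large branches at $b^*$, and combined with the branch-point separation at $u$ this yields the needed lower bound on $d(u,b^*)$; once that is in hand, the choice of $\delta$ simply forces $b^*$ out of the much smaller tile $X'$ and the exit points align automatically.
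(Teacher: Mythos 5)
Your proof follows the same strategy as the paper's: define $u'$ as the exit point of $[u,v]$ from $X'$, take the branching point $b^*$ of $[u,v_1]$ and $[u,v_2]$, show that $b^*$ is a branch point with three branches of diameter $\geq\beta\delta^n$, invoke the good-double-point separation \eqref{eq:dist_vb} to get $d(u,b^*)\geq\gamma\delta^n$, and choose $\delta$ small enough that $b^*$ must lie outside $X'$, forcing the exit points to agree. The key insight — that the ``back sides'' of $u,v_1,v_2$ guaranteed by \eqref{eq:double_Delta} produce large branches at $b^*$, so that \eqref{eq:d_x_branch} applies — is exactly what the paper does.

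There is, however, one small gap in your case analysis. You write that either $b^*=v_i$ for some $i$, or $b^*$ is a branch point. This dichotomy omits the possibility $b^*=u$, which Lemma~\ref{lem:tripcrit} does \emph{not} exclude a priori, and in which the branch-point argument cannot run (in particular ``the component of $\qT\setminus\{u\}$ opposite from $b^*$'' has no meaning when $b^*=u$). You need to rule this out explicitly: if $b^*=u$ then $u\in[v_1,v_2]$, so $v_1$ and $v_2$ would lie in distinct components of $\qT\setminus\{u\}$, contradicting Lemma~\ref{lem:vt}~\ref{item:vt2} since both lie in the single tile $X$. Incidentally, the same argument applied with $v_i$ in place of $u$ shows that $b^*=v_i$ is also impossible (as the paper observes), so your explicit handling of that case, though harmless, is not needed; what is needed is the exclusion $b^*\neq u$, which you skipped.
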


Note that the existence of a  unique $(n+1)$-tile $X'\sub X$ with $u\in X'$ is guaranteed by 
Lemma~\ref{lem:vt}~\ref{item:vt9}. 
If we are in the setting of Lemma~\ref{lem:branch_not_partial}
and $\delta$ is so small that
 Lemma~\ref{lem:delta_small_tiles_child}~\ref{item:delta_X_X1} applies, then, as we travel from $u$ to $v$ along $[u,v]\sub X$, we must exit
$X'$, because $X'$ contains  $u$, but not $v$.  So there is a last point on $[u,v]$ that belongs to $X'$. 
This must be the point $u'\in  \partial X'\setminus\{u\}$ in the
statement, because $[u,u']\sub X'$ and $u,v$ lie in different
components of
$\qT\setminus\{u'\}$; so $X'\cap (u',v]=\emptyset$
by Lemma~\ref{lem:vt}~\ref{item:vt2}. According to Lemma~\ref{lem:branch_not_partial}, this last point $u'$ in $[u,v]\cap X'$ is independent of $v$, and so  
we always exit $X'$ at the same $(n+1)$-vertex $u'$ when traveling  from
$u$ to any other point in $\partial X$. We will later call $u$ and 
$u'$ the 
``main vertices'' of $X'$. 

\begin{figure}
  \centering
  \begin{overpic}[width=7cm,tics=10,
    ]{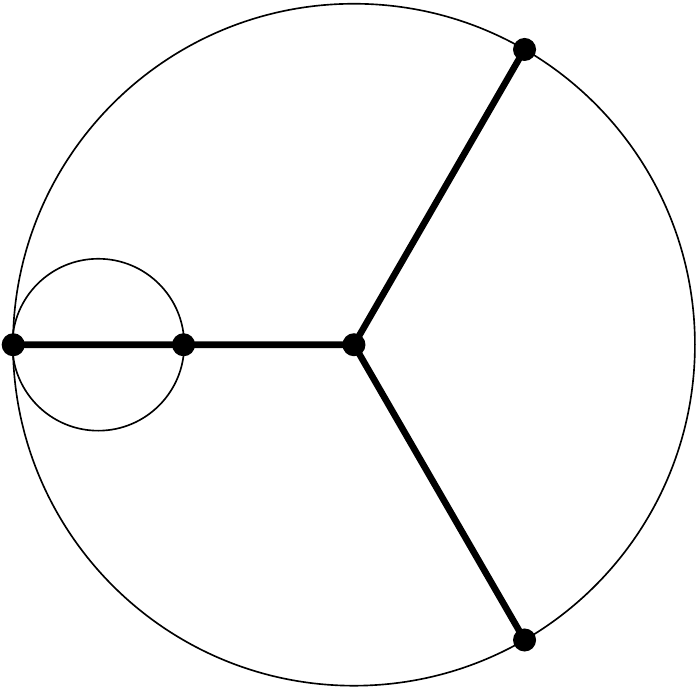}
    \put(-4,48){$u$}
    \put(27,51){$u'$}
    \put(54,47.6){$b$}
    \put(77.2,91.3){$v$}
    \put(77,4){$w$}
    \put(18,33){$X'$}
    \put(40,17){$X$}
  \end{overpic}
  \caption{Branching in an $n$-tile.}
  \label{fig:branch_ntile}
\end{figure}

\begin{proof}[Proof of Lemma~\ref{lem:branch_not_partial}] We may assume that $\delta$ is so small that the statements in Lem\-ma~\ref{lem:delta_small_tiles_child} are true. It follows from the preceding discussion, that if 
$v\in \partial X\setminus \{u\}$, then there is a last point $u'\in [u,v]\cap X'$ as we travel from $u$ to $v$ along $[u,v]$. Clearly, $u'\in \partial X'\sub \V^{n+1}$. We also have $u\ne u'$, because if $u=u'$, then 
$(u,v]\sub X$ is disjoint from $X'$
and so the set $(u,v]$ would be covered 
by the finitely many $(n+1)$-tiles $Y\sub X$ distinct from $X'$. These tiles $Y$ then also cover $[u,v]$, and so $u$ is  contained in a tile   $Y\sub X$ distinct from $X'$. We know that this is impossible, and so indeed $u'\ne u$. 

It remains to show that this last point $u'$ on $[u,v]\cap X'$ is independent of $v$. So suppose $w\ne v$ is another vertex in $\partial X\setminus \{u\}$. Then the points $u,v,w\in \partial X$ are distinct.   
For an illustration of the ensuing argument see Figure~\ref{fig:branch_ntile}.

Since $\qT$ is a tree, the arcs $[u,v]$ and $[u,w]$ share an initial 
segment $[u,b]=[u,v]\cap [u,w]$, where $b\in \qT$, but no other points.  It suffices to show that $[u,u']\sub [u,b]$. Since both points $u'$ and $b$ lie on $[u,v]$, we have $[u, u']\sub [u,b]$ or $[u,b]\sub [u,u']$. The first alternative is necessarily  true if we can show that 
$\diam\,[u,u'] < \diam\, [u,b]$. 

First note  that $b\neq u$. Indeed, if $b=u$ then $u\in [v,w]=[v,u]\cup [u,w]$, and so  $v$ and $w$ would
  lie in distinct components of $\qT\setminus \{u\}$. By Lemma~\ref{lem:vt}~\ref{item:vt2} this is impossible, because $v$ and $w$ lie in the same tile
  $X$. Similarly, $b\neq v$ and $b\neq w$.

  It follows from  Lemma~\ref{lem:tripcrit} that $b$ is a branch point of $\qT$ and $u,v,w$ lie in
  distinct components $U_u,U_v,U_w$ of $\qT\setminus\{b\}$, respectively.  Note
  that $U_u$ contains one  component $V_u$ of
  $\qT\setminus\{u\}$.  Thus by \eqref{eq:diamTv},
  \begin{equation*}
    \diam(U_u) \geq \diam(V_u) \geq D_{\qT}(u) \geq \beta
    \delta^n. 
  \end{equation*}
  Similarly, $\diam(U_v) \geq \beta \delta^n$ and $\diam(U_w)
  \geq \beta \delta^n$. It follows that
  \begin{equation*}
    H_{\qT}(b)
    \geq
    \min\{\diam(U_u),\diam(U_v),\diam(U_w)\}
    \geq
    \beta \delta^n
    \geq
    \delta^n,
  \end{equation*}
  since $\beta\geq 1$. Thus by  \eqref{eq:dist_vb} we have 
  \begin{equation*}
    d(u,b)
    \geq
    \gamma\min\{H_{\qT}(b),\delta^n\}
    \geq
    \gamma \delta^n. 
  \end{equation*}
 By  \eqref{eq:Xn_dn}  we know  that $\diam(X') \leq
  3\beta\delta^{n+1}$, and so 
  $$d(u,u')\le \diam(X')\le  3\beta\delta^{n+1}. $$ 
  So if we assume that  $0<\delta< \gamma/(3\beta)$, then it follows that  
  \begin{equation*}
 \diam\, [u,u'] =  d(u,u') \leq 3 \beta \delta^{n+1} < \gamma \delta^n
    \leq d(u,b)=\diam\, [u, b].
  \end{equation*}
  As we have seen, this implies  $[u,u']\subset [u,b]$ as desired. 
\end{proof}

For the rest of the paper we fix $0< \delta<1/(3\beta)$ such that the statements of
Lemma~\ref{lem:delta_small_tiles_child} and
Lemma~\ref{lem:branch_not_partial} are true. As we see from the proofs, it is enough to choose $\delta=\frac 12 \min\{1/(6\beta),
\gamma/(3\beta)\}$.  Then $\delta$ only depends on the doubling
constant $N$ of $\qT$, because this is true for $\beta$ and
$\gamma$. The sets ${\bf V}^n$ of vertices and 
${\bf X}^n$ of tiles for $n\in \N_0$ as constructed in the beginning of this section correspond to this choice of $\delta$ and will be fixed from now
on.

Let us record some consequences.

\begin{lemma}
  \label{lem:ch+verts} Let $n\in \N_0$, 
$X$ be an  $n$-tile, and $u,v\in \partial X\sub \V^n$ with  $u\ne
  v$. Then the simple $(n+1)$-chain $ P^{n+1}_{uv}$ joining  $u$ to $v$ consists precisely of all $(n+1)$-tiles 
  $X'\sub X$ with $X'\cap [u,v]\ne \emptyset$. 
  Moreover, $P^{n+1}_{uv}$ does not contain any point $w\in \partial X$ distinct from $u$ and $v$. 
\end{lemma}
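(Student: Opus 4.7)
The plan is to reduce the lemma to Lemma~\ref{lem:subdiv_nchain} by showing that the simple $n$-chain $P^n_{uv}$ joining $u$ and $v$ is the singleton chain $(X)$. First I would verify that the open arc $(u,v)$ contains no $n$-vertex: suppose toward a contradiction that $w\in (u,v)\cap \V^n$. Since $X$ is a subtree of $\qT$ by Lemma~\ref{lem:vt}~\ref{item:vt1}, we have $[u,v]\sub X$, hence $w\in X$, and then $w\in \partial X$ by Lemma~\ref{lem:vt}~\ref{item:vt9}. Because $w\in (u,v)$, Lemma~\ref{lem:top_T}~\ref{item:top_T1} forces $u$ and $v$ into different components of $\qT\setminus\{w\}$, so the arcs $[w,u]$ and $[w,v]$ emanate in opposite directions and meet only at $w$. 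But Lemma~\ref{lem:branch_not_partial} applied at the boundary vertex $w$ of $X$ produces an $(n+1)$-vertex $w'\ne w$ with $[w,w']\sub [w,u]\cap [w,v]=\{w\}$, a contradiction. Therefore $(X)$ is the unique simple $n$-chain joining $u$ and $v$ by Lemma~\ref{lem:ex_chain_unique}~\ref{item:chain1}.

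Next I would apply Lemma~\ref{lem:subdiv_nchain}~\ref{item:subdiv_nchain2} with $r=1$, $v_0=u$, $v_1=v$ to conclude that $P^{n+1}_{uv}$ consists of precisely the $(n+1)$-tiles $X'\sub X$ meeting $(u,v)$. To upgrade from $(u,v)$ to $[u,v]$ in the characterization, let $X'_u\sub X$ denote the unique $(n+1)$-tile containing $u$ (Lemma~\ref{lem:vt}~\ref{item:vt9}). Lemma~\ref{lem:branch_not_partial} supplies $u'\in \partial X'_u\setminus\{u\}$ with $[u,u']\sub [u,v]$; Lemma~\ref{lem:delta_small_tiles_child}~\ref{item:delta_X_X1} forces $P^{n+1}_{uv}$ to have length $\ge 3$, so $X'_u\ne X'_v$ and hence $v\notin X'_u$, giving $u'\ne v$. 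Thus $u'\in (u,v)\cap X'_u$, so $X'_u$ already belongs to the chain; the symmetric argument at $v$ completes the first claim.

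For the second claim, fix $w\in \partial X\setminus\{u,v\}$. By Lemma~\ref{lem:vt}~\ref{item:vt9} the unique $(n+1)$-tile $X'_w\sub X$ containing $w$ is the only candidate, so it suffices to show $X'_w\cap [u,v]=\emptyset$. I would locate the median $b$ of $\{u,v,w\}$, the common point of $[u,v]$, $[u,w]$, and $[v,w]$, and show that $b$ is a branch point of $\qT$. None of $u,v,w$ lies on the arc between the other two: e.g., if $u\in [v,w]$ then Lemma~\ref{lem:top_T}~\ref{item:top_T1} places $v$ and $w$ in distinct components of $\qT\setminus\{u\}$, contradicting Lemma~\ref{lem:vt}~\ref{item:vt2} since both lie in $X$. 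Hence $b\notin\{u,v,w\}$, the half-open arcs $[u,b)$, $[v,b)$, $[w,b)$ are pairwise disjoint (they meet only at $b$), and Lemma~\ref{lem:tripcrit} yields that $b$ is a branch point.

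The hard part is then deducing $X'_w\cap [u,v]=\emptyset$ from this. Lemma~\ref{lem:branch_not_partial} at $w$ gives $w'\in \partial X'_w\setminus\{w\}$ with $[w,w']\sub [w,u]\cap [w,v]=[w,b]$; since every vertex is a double point by construction while $b$ is a branch point, $w'\ne b$, so $w'\in [w,b)$. I would then argue $b\notin X'_w$: otherwise $[w,b]\sub X'_w$ because $X'_w$ is a subtree, but the discussion following Lemma~\ref{lem:branch_not_partial} yields $X'_w\cap [w,u]=[w,w']$, so $[w,b]\sub [w,w']$ and hence $b=w'$, a contradiction. Finally, any $p\in X'_w\cap [u,v]$ lies in $[u,b]$ or $[b,v]$; in the first case $p\in X'_w\cap [u,w]=[w,w']\sub [w,b]$ forces $p\in [u,b]\cap [w,b]=\{b\}$, so $b\in X'_w$, contradicting the preceding step. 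The symmetric case closes the proof.
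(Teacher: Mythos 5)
Your proof is correct, but it departs from the paper's argument for the second assertion in a way worth noting. For the first assertion, you and the paper both lean on Lemma~\ref{lem:subdiv_nchain}~\ref{item:subdiv_nchain2}; you make the reduction fully explicit by first showing that $(u,v)$ contains no $n$-vertex (so that $P^n_{uv}=(X)$, giving $r=1$ when you apply the lemma), then upgrading $X'\cap(u,v)\ne\emptyset$ to $X'\cap[u,v]\ne\emptyset$ via Lemma~\ref{lem:branch_not_partial}. The paper's version is more condensed (it invokes "similar considerations" together with the uniqueness statement in Lemma~\ref{lem:vt}~\ref{item:vt9}), but the content is the same. Incidentally, for the step showing $(u,v)\cap\V^n=\emptyset$ you do not actually need Lemma~\ref{lem:branch_not_partial}: if $w\in(u,v)\cap\V^n$, then by Lemma~\ref{lem:top_T}~\ref{item:top_T1} the points $u,v$ lie in different components of $\qT\setminus\{w\}$, while Lemma~\ref{lem:vt}~\ref{item:vt2} forces both into the closure of a single component of $\qT\setminus\{w\}$ (and neither equals $w$), which is already a contradiction.

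For the second assertion, the paper works directly with the chain indices: assuming a boundary vertex $w$ lies in some $X_i$ with $2\le i\le r-1$, it uses Lemma~\ref{lem:branch_not_partial} at $w$ to produce a point $w'$ that would have to coincide with both $v_{i-1}$ and $v_i$, violating simplicity of the chain. Your argument instead locates the branch point $b$ (the median of $u,v,w$ via Lemma~\ref{lem:tripcrit}), uses Lemma~\ref{lem:branch_not_partial} together with the fact that $w'$ is a double point and $b$ is a branch point to pin down $w'\in[w,b)$, shows $b\notin X'_w$, and finally shows $X'_w\cap[u,v]=\emptyset$ so that $X'_w$ is excluded by the first assertion. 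This is a genuinely different route: the paper's proof is shorter and stays entirely within the combinatorics of the simple chain, whereas yours gives a more geometric picture of \emph{why} the chain avoids $w$ (the chain stays on the $[u,v]$ side of the branch point $b$, while $X'_w$ stays on the $w$ side). Both are sound; the paper's is more economical, yours is more explanatory and makes visible the role of the branch-point separation property \eqref{eq:d_x_branch}.
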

It follows from the definition of  $P^{n+1}_{uv}$  that  only the first tile of $P^{n+1}_{uv}$ contains $u$, and only the last tile contains $v$. So $P^{n+1}_{uv}$ has ``contact" with  $\partial X$ only twice: in its first tile, where it meets $u$, and in its last tile, where it meets $v$.

\begin{proof}
  [Proof of Lemma~\ref{lem:ch+verts}]  
  Let $P\coloneqq P^{n+1}_{uv}$, and assume $P$ is given by the
  $(n+1)$-tiles $X_1,\dots,X_r$, where $r\in \N$.  \

  The first statement follows from considerations similar to the
  ones in the proof of
  Lemma~\ref{lem:subdiv_nchain}~\ref{item:subdiv_nchain2}). Note
  that Lemma~\ref{lem:vt}~\ref{item:vt9} implies that $X'=X_1$ is
  the only $(n+1)$-tile $X'\sub X$ with $u\in X'$, and $X'=X_r$
  is the only $(n+1)$-tile $X'\sub X$ with $v\in X'$.
   
  To prove the second statement, we argue by contradiction and
  assume that it is false. Then there exists a point
  $w\in \abs{P}\cap \partial X$ that is distinct from $u$ and
  $v$. Then $w\in X_i$ for some $i\in \{1,\dots,r\} $.  In fact,
  since an $(n+1)$-tile cannot contain two distinct $n$-vertices
  (see
  Lemma~\ref{lem:delta_small_tiles_child}~\ref{item:delta_X_X1}),
  we have $2\leq i \leq r-1$. Let $v_{i-1}$ and $v_i$ be the
  (unique) points in $X_{i-1} \cap X_i$ and $X_i\cap X_{i+1}$,
  respectively.
  
  We now choose the $(n+1)$-vertex $w'\in \partial X_i$ for the
  $n$-vertex $w\in X_i\sub X$ as in
  Lemma~\ref{lem:branch_not_partial}. In particular, $w'$ is the
  last point on both $[w,u]$ and $[w,v]$ as we travel from $w$ to
  $u$ or from $w$ to $v$.
  
  Since the set $X_1\cup \dots \cup X_i$ is connected, we have
  $[w,u]\subset X_1\cup \dots \cup X_i$. So the last point $w'$
  in $[w,u]\cap X_i$ must be a point in
  $X_1\cup \dots \cup X_{i-1}$.  Since $P$ is the simple
  $(n+1)$-chain joining $u$ and $v$, this is only possible if
  $w'=v_{i-1}$, because there is no other common point of $X_i$
  with any of the tiles $X_1, \dots, X_{i-1}$. Similarly, by
  considering $[w,v]\sub X_i\cup \dots \cup X_r$, we see that
  $w'=v_i$. This is impossible, because then
  $w'\in X_{i-1}\cap X_{i+1}\ne \emptyset$, contradicting the
  fact that $P$ is a simple $(n+1)$-chain.
\end{proof} 

The following statement  gives uniform control for the local combinatorics of tiles. 

\begin{lemma}
  \label{lem:nX_child_neigh}
  There is a constant $K\in \N$ such that the
  following statements are true  for each $n\in \N_0$ and each $n$-tile $X$:
  \begin{enumerate}
  \item 
    \label{item:nX_neigh}
    There are at most $K$ $n$-tiles
    that intersect $X$.
  \item 
    \label{item:nX_child}
    There are at most $K$ $(n+1)$-tiles contained in $X$.
  \end{enumerate}
\end{lemma}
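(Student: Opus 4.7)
My plan is to deduce both parts from the doubling property of $\qT$ together with the metric estimates \eqref{eq:Vn_sepa} and \eqref{eq:Xn_dn}. In each case the count will reduce to bounding the number of $\delta^n$- (respectively $\delta^{n+1}$-) separated points contained in a ball whose radius is comparable to $\delta^n$, and this is controlled by the doubling constant $N$ via the discussion at the start of Section~\ref{sec:diameter-distance}.

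For \ref{item:nX_neigh}, I will first observe that any $n$-tile $Y\ne X$ meeting $X$ must intersect $X$ in a single point of $\partial X\cap \V^n$, by Lemma~\ref{lem:vt}~\ref{item:vt4}. Conversely, Lemma~\ref{lem:vt}~\ref{item:vt5} shows that each $v\in\partial X$ lies in exactly one $n$-tile other than $X$, so the number of $n$-tiles meeting $X$ equals $1+\#\partial X$. It then remains to bound $\#\partial X$: by \eqref{eq:Vn_sepa} the points of $\partial X\subset\V^n$ are $\delta^n$-separated, while by \eqref{eq:Xn_dn} the set $\partial X\subset X$ lies inside a ball of radius $3\beta\delta^n$. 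The doubling inequality then yields $\#\partial X\le K_1$ with a constant $K_1$ depending only on $N$.

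For \ref{item:nX_child}, I will count by assigning to each $(n+1)$-tile $X'\subset X$ a vertex $v_{X'}\in\partial X'$; such a vertex exists and lies in $\V^{n+1}\cap X$ by Lemma~\ref{lem:vt}~\ref{item:vt1} and~\ref{item:vt3} together with $X'\subset X$. By Lemma~\ref{lem:vt}~\ref{item:vt5} each $(n+1)$-vertex is shared by at most two $(n+1)$-tiles, so $X'\mapsto v_{X'}$ is at most $2$-to-$1$ and the number of $(n+1)$-tiles contained in $X$ is at most $2\cdot\#(\V^{n+1}\cap X)$. The set $\V^{n+1}\cap X$ is $\delta^{n+1}$-separated by \eqref{eq:Vn_sepa} and is contained in a ball of radius $3\beta\delta^n=(3\beta/\delta)\cdot\delta^{n+1}$ by \eqref{eq:Xn_dn}, so the same doubling argument gives a bound $K_2$ depending only on $N$.

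No serious obstacle is anticipated; the only item that needs attention is the bookkeeping to confirm that the final $K$ depends solely on $N$. This is automatic: $\beta=\beta(N)$ by Proposition~\ref{prop:V_Delta_sep}, $\gamma=\gamma(N)$ by Proposition~\ref{prop:branchaway}, and $\delta=\delta(N)$ was fixed after Lemma~\ref{lem:branch_not_partial} as an explicit function of $\beta$ and $\gamma$. Taking $K:=\max\{K_1+1,\,K_2\}$ then supplies a uniform constant valid at every level $n$ and for every $n$-tile $X$, completing the proof.
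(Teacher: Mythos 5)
Your proof is correct and follows essentially the same approach as the paper: in both parts you reduce the count to $\delta^n$- (resp.\ $\delta^{n+1}$-) separated vertices inside a ball of radius $\asymp\delta^n$ and invoke the doubling property, exactly as the paper does. The only cosmetic difference is in part (i), where you count $\#\partial X$ directly and observe the bijection with neighboring tiles, whereas the paper labels the neighbors $X_1,\dots,X_k$ and notes the intersection vertices are distinct; the underlying argument is identical (and your $K_1+1$ is slightly more careful about including $X$ itself in the count).
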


\begin{proof}
   \ref{item:nX_neigh} Let  $X_1,\dots,X_k$ denote  all the  $n$-tiles distinct from $X$ that
  intersect $X$,  where  $k\in \N_0$  (if $n=0$, we have $k=0$, and  this list is empty).  Since diameters of $n$-tiles are comparable 
  to $\delta^n$ as in  \eqref{eq:Xn_dn}, there is a constant
  $C=C(N) >0$ only depending on the doubling constant $N$ of $\qT$ (and hence independent of $n$ and $X$) such that 
  \begin{equation*}
    X\cup X_1\cup \dots \cup X_k \subset B(x,C\delta^n),
  \end{equation*}
  where $x$ is some point in $X$.

 For $i=1, \dots, k$  the $n$-tiles $X$ and $X_i$ intersect in an $n$-vertex
  $v_i$ (see Lemma~\ref{lem:vt}~\ref{item:vt4}). By   Lemma~\ref{lem:vt}~\ref{item:vt5} each of these $n$-vertices $v_i$ is contained in precisely two $n$-tiles, namely $X$ and $X_i$.   It follows that $v_i\neq v_j$ for $i\neq j$. Thus
  $B(x,C\delta^n)\supset X\cup X_1 \cup \dots \cup X_k$ contains
  at least $k$ distinct $n$-vertices $v_1,\dots,v_k$.  Since 
  $C$ only depends on $N$ and 
  the $n$-vertices $v_1, \dots , v_k$ are $\delta^n$-separated by
  \eqref{eq:Vn_sepa}, it follows that there is a constant
  $K_1=K_1(N) \in \N$ 
  such that $k\leq K_1$. 

  \smallskip
  \ref{item:nX_child}
  As before, there is a constant $C=C(N)>0$ independent of $n$ and $X$
  such that $X\subset B(x,C\delta^n)$, where $x$ is some point in
  $X$ (see \eqref{eq:Xn_dn}). If $k\in \N$ is the number of
  $(n+1)$-tiles contained in $X$, then $X$ also contains at least
  $k/2$ distinct $(n+1)$-vertices, because each $(n+1)$-tile contains at least one  $(n+1)$-vertex and each $(n+1)$-vertex   is
  contained in at most two $(n+1)$-tiles.  These $(n+1)$-vertices are
  $\delta^{n+1}$-separated by \eqref{eq:Vn_sepa}.  So it follows
  that there is a constant $K_2\in\N$
  only depending on $C$ and $\delta$ (and hence independent of
  $n$ and $X$)  such that $k\leq K_2$.

  \smallskip If we now choose $K\coloneqq\max\{K_1,K_2\}$, then
  statements \ref{item:nX_neigh} and \ref{item:nX_child} are both
  true for all $n\in \N_0$ and all $n$-tiles $X$.
\end{proof}

\section{Weights and main vertices of tiles}
\label{sec:weights-vertices}

We will now define \emph{weights} of tiles. Later they will  be used to construct our desired geodesic metric $\varrho$.
The weight of each $n$-tile $X$, $n\in \N_0$, is a number
$w(X)\in (0,\infty)$. We will define it by an inductive process over the level $n\in \N_0$.   

Once we have determined weights of tiles, we can define the
\emph{$w$-length} of an $n$-chain $P$ given by the $n$-tiles
$X_1, \dots,X_r$ as
\begin{equation}
  \label{eq:deflengthP}
  \length_w(P) \coloneqq \sum_{i=1}^r w(X_i). 
\end{equation}

For the construction of the geodesic metric it is desirable to have a relation between the weight $w(X)$ of an $n$-tile $X$ and  the
$w$-length of some  simple $(n+1)$-chains $P$
joining points  on the boundary of $X$. 
For this reason, we will single out two distinct points $p,q\in 
\partial X$ (i.e., two $n$-vertices in $X$) as the
\emph{main vertices} of $X$. Of course, this requires that
$\#\partial X\geq 2$. In this case, we call  $X$ an
\emph{arc-tile}, because we think of $X$ as carrying the distinguished arc $[p,q]$.   Otherwise, $\#\partial X\ \le 1$.  If 
$\#\partial X =1$, then we call $X$ a \emph{leaf-tile}.
Finally, if  $\partial X=\emptyset$, then necessarily $n=0$ and $X=\qT$ (this follows from Lemma~\ref{lem:vt}~\ref{item:vt3}).  

If $X$ is an arc-tile, $p,q\in \partial X$ are the main vertices of 
$X$, and  $P=P^{n+1}_{pq}$ is the unique simple $(n+1)$-chain  joining $p$
and $q$, then we will choose weights in such a way that 
$\length_w(P)=w(X)$ (see
\eqref{eq:lPp_wX}). This will ensure that 
 the distance functions $\varrho_n$ that we use to define the desired geodesic metric do not  degenerate as $n\to \infty$ (see \eqref{eq:defrhon} and Lemma~\ref{rho_n_X}).

The $(n+1)$-tiles $X'\subset X$ that do not
intersect $[p,q]$ will be given a uniformly small relative weight 
$\eps_0=w(X')/w(X)$ (see \eqref{eq:weight_w_not_vw}).
 As a consequence, 
the  distance functions $\varrho_n$ are ``almost'' decreasing
(Lemma~\ref{lem:simpknchanest})  and have a limit as $n\to \infty$ (Lemma~\ref{lem:rhonlimexis}). 
Letting $\eps_0\to 0$ will later also allow us to 
derive  Theorem~\ref{thm:qtree_confdim1}.

After this outline of some of the ideas, we will now give the details for the definition of weights and main vertices of tiles. Let $K\in \N$ be the constant from
Lemma~\ref{lem:nX_child_neigh}.  We fix a parameter
\begin{equation}
  \label{eq:para}
  0<\epsilon_0\leq 1/(3K). 
\end{equation} 

There is a single $0$-tile $X^0=\qT$. We set
$w(X^0) \coloneqq 1$. Since $\partial X^0=\emptyset$, we do not
define main vertices of $X^0$.

We now assume that for some $n\in \N_0$ we have defined the
weight of each $n$-tile $X$ and its main vertices if
$\# \partial X\ge 2$. We fix $X$ and want to define weights of
$(n+1)$-tiles $X'\sub X$
and main vertices for arc-tiles $X'$.
Since every $(n+1)$-tile is contained in a unique $n$-tile (see Lemma~\ref{lem:vt}~\ref{item:vt7}), this will provide the necessary inductive step. Figure~\ref{fig:weights} illustrates how we will choose weights and main
vertices of $(n+1)$-tiles $X'\sub X$ in the ensuing discussion. 
 In this figure, we indicated relative weights $w(X')/w(X)$.

Assume first that $\partial X=\emptyset$. This happens
precisely  when $n=0$ and  $X=X^0=\qT$.   We
set $w(X')\coloneqq\eps_0 w(X)=\epsilon_0$ for each $1$-tile
$X'$. If $X'$ is an arc-tile and so $\#\partial X'\ge 2$, we pick two   (arbitrary) distinct
points in $\partial X'$ and declare them to be the main vertices of $X'$.

Suppose now that $X$ is a leaf-tile, i.e., $\#\partial X=
1$. We then set $w(X') = \epsilon_0 w(X)$ for each $(n+1)$-tile
$X'\subset X$.

To define main vertices of $(n+1)$-tiles that are  arc-tiles contained in $X$,  recall first  that there is a unique
$(n+1)$-tile $X'\subset X$ that contains the (only) $n$-vertex
$u\in \partial X$.  It follows from our choice of $\delta$ and Lemma~\ref{lem:delta_small_tiles_child}~\ref{item:X3child} that  $X'$ must be an arc-tile. 
 We
declare $u$ and some other (arbitrary) $(n+1)$-vertex
$u'\in \partial X'$ with $u'\neq u$ to be  the main vertices of $X'$.

If  an  $(n+1)$-tile  $X'\subset X$ is an arc-tile and does not intersect $\partial X$, we again declare two arbitrary
distinct $(n+1)$-vertices in $\partial X'$ to be  the main vertices
of $X'$. This completes the inductive step in the case that $X$ is a
leaf-tile.

\begin{figure}
  \centering
  \begin{overpic}
    [width=12cm, tics=10,  
    ]
    {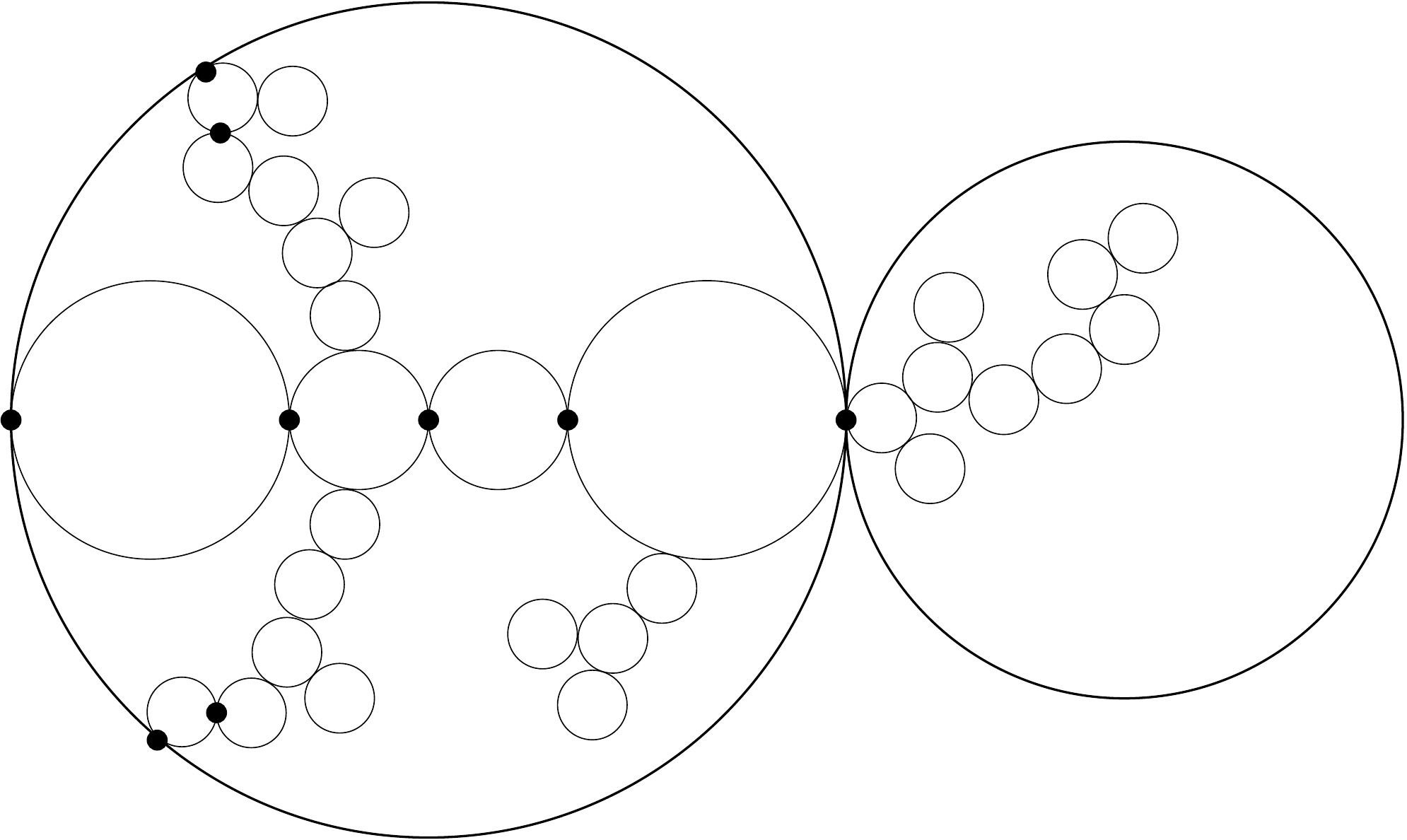} 
    \put(50, 3){$X$}
    \put(-2,29){$p$}
    \put(16.8,29){$v_1$}
    \put(31.5,29){$v_2$}
    \put(41.5,29){$v_3$} 
    \put(57.4,29){$q$}
    \put(9,29){$\tfrac{1}{3}$}
    \put(49,29){$\tfrac{1}{3}$}
    \put(22,29){${\scriptstyle\frac{1}{3(r-2)}}$}
    \put(14.6,52.1){${\scriptstyle\epsilon_0}$}
    \put(23,9.5){${\scriptstyle\epsilon_0}$}
    \put(13,56){$u$}
    \put(10.8,48.8){$u'$} 
    \put(80,5){$Y$}
    \put(61.5,29.5){${\scriptstyle\epsilon_0}$}
    \put(79,35.7){${\scriptstyle\epsilon_0}$} 
  \end{overpic}
  \caption{Relative weights and main vertices of tiles}
  \label{fig:weights}
\end{figure}

Finally,  suppose that  $X$ is an arc-tile, i.e., $\#\partial X\ge 2$, and   let 
$p,q\in\partial X$ be the main vertices of $X$.   By Lemma~\ref{lem:vt}~\ref{item:vt9} there are   unique
$(n+1)$-tiles
$X'_{p}\subset X$ and $X'_{q}\subset X$  
containing $p$ and $q$,  respectively. By our choice of $\delta$ and 
Lemma~\ref{lem:delta_small_tiles_child}~\ref{item:delta_X_X1}, the tiles $X'_p$ and $X'_q$ are distinct and disjoint. We
set
\begin{equation}
  \label{eq:weight_main_pt}
  w(X'_{p}) = w(X'_{q}) \coloneqq \tfrac{1}{3}w(X).
\end{equation}

Suppose the simple $(n+1)$-chain $P\coloneqq P^{n+1}_{pq} $ joining 
$p$ and $q$ is given by the $(n+1)$-tiles 
\begin{equation}\label{eq:P_main_vert}
  X'_1=X'_{p} ,X'_2,\dots, X'_{r}=X'_{q}.
\end{equation}
Then  $P$  consists of tiles $X_i'$ contained in $X$. 
Since $p\ne q$ we have $r\geq 3$. Moreover, $r\leq K$ by
Lemma~\ref{lem:nX_child_neigh}~\ref{item:nX_child}.
 Note that
 $P=P^{n+1}_{pq}$ consists precisely of all tiles $X'\sub X$
 with $X'\cap [p, q]\ne \emptyset$ (see
 Lemma~\ref{lem:ch+verts}).

We have $X'_1=X'_{p}$ and $X'_r=X'_{q}$ and so the weights
$w(X'_1)$ and $w(X'_r)$ are already defined. We set
\begin{equation}
  \label{eq:weight_main_diag}
  w(X'_i) \coloneqq \frac{1}{3(r-2)}w(X).
\end{equation}
for $i=2, \dots, r-1$. 
Then
\begin{equation}
  \label{eq:lPp_wX}
  \sum_{i=1}^r w(X'_i) = w(X). 
\end{equation}
So the weights are defined in such a way that the $w$-length (as  in \eqref{eq:deflengthP}) of
the simple $(n+1)$-chain $P$ joining the two main vertices $p$
and $q$ of the $n$-tile $X$ is exactly equal to $w(X)$.

To define the main vertices of the tiles $X'_i$, let $v_i$ be the
(unique) point in $X'_i\cap X'_{i+1}$ for
$i=1,\dots,r-1$. Furthermore, let $v_0\coloneqq p$ and
$v_r\coloneqq q$. Then $v_{i-1}$ and $v_{i}$ are distinct $(n+1)$-vertices  in $\partial X'_i$ for $i=1, \dots, r$. We declare them
to be the main vertices of $X'_i$. In other words, 
two successive  $(n+1)$-vertices on $[p,q]$ are the main
vertices of the unique $(n+1)$-tile $X'\subset X$ that contains them.

We now consider  an $(n+1)$-tile $X'\subset X$ that does not
intersect $[p,q]$. We set
\begin{equation}
  \label{eq:weight_w_not_vw}
  w(X') \coloneqq \epsilon_0 w(X).
\end{equation} 
It remains to define the main vertices of $X'$ if $X'$ is  an
arc-tile.  If $X'$ contains a point $u\in \partial X$, we let
$u'\in \partial X'\setminus \{u\}$  be the $(n+1)$-vertex given by
Lemma~\ref{lem:branch_not_partial}. We declare $u$ and $u'$ to be the man vertices of $X'$. If 
$X'\cap \partial X=\emptyset$, we declare two arbitrary points in
$\partial X'$ to be the main vertices of $X'$. 
This concludes the
definition of weights and main vertices of tiles $X'\sub X$ in case $X$ is an arc-tile.


The inductive step is
now complete, because we covered all possibilities for $X$. Therefore,  weights are defined for all tiles and main
vertices for all arc-tiles.  To avoid possible confusion, we point out that if an 
$n$-vertex $u$ is contained in two distinct $n$-tiles $X$ and $Y$, 
and $u$ is a  main vertex  of $X$, then it is not necessarily a main vertex  of $Y$. 
 
The choice of relative weights and main vertices is illustrated in
Figure~\ref{fig:weights}. Main vertices of $(n+1)$-tiles that intersect 
neither $[p,q]$ nor  
$\partial X$ were chosen arbitrarily,
and  are not shown in the picture. 

Note that for $r$ in \eqref{eq:weight_main_diag} we have
$3\leq r\le K$. Hence
$$\epsilon_0\leq  \frac {1}{3K} \le \frac{1}{3(r-2)} \leq \frac{1}{3}. $$ This
and the definition of the weights imply that for each
$(n+1)$-tile $X'$ contained in an $n$-tile $X$ we have
\begin{equation}  \label{eq:wXXprime}
  \epsilon_0 w(X) \leq w(X') \leq \tfrac{1}{3} w(X).
\end{equation}

Having defined the weights of tiles, we can now estimate the
$w$-length of chains (see \eqref{eq:deflengthP} for  the
definition).

\begin{lemma}
  \label{lem:length_vert}
  Let  $n\in \N_0$ and $X$ be an $n$-tile. Then the following
  statements are true:
  \begin{enumerate}
  \item
    \label{item:w_lenth1} 
    For each simple $(n+1)$-chain $P$ consisting of tiles contained  in $X$ we have
    \begin{equation*}
      \epsilon_0 w(X) \leq \length_w(P)\leq \tfrac 43 w(X).
    \end{equation*}
   
    \item
    \label{item:w_lenth2} 
    If $X$ is an arc-tile, $u,v\in \partial X$ with $u\ne v$, and
    $P=P^{n+1}_{uv}$ is the simple $(n+1)$-chain joining $u$ and
    $v$, then
    \begin{equation*}
     \length_w(P)\leq w(X).
    \end{equation*}
    Here we have equality if $u$ and $v$ are the main vertices of
    $X$.  
    \end{enumerate}
\end{lemma}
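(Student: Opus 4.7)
My plan is to analyze both parts tile-by-tile, exploiting the explicit weight assignments of Section~\ref{sec:weights-vertices}. Fix first an arc-tile $X$ with main vertices $p, q$, and let $X'_1, \ldots, X'_r$ denote the tiles of $P^{n+1}_{pq}$; by the construction \eqref{eq:weight_main_pt}--\eqref{eq:weight_main_diag} one has $r \ge 3$, endpoint weights $w(X'_1) = w(X'_r) = w(X)/3$, middle weights $w(X'_k) = w(X)/(3(r-2))$ for $2 \le k \le r-1$, and \eqref{eq:lPp_wX} reading $\sum_{k=1}^r w(X'_k) = w(X)$. Every remaining $(n+1)$-tile $X' \subset X$ has weight $\epsilon_0 w(X)$ by \eqref{eq:weight_w_not_vw}. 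Since Lemma~\ref{lem:nX_child_neigh}~\ref{item:nX_child} bounds the total number of $(n+1)$-tiles in $X$ by $K$ and \eqref{eq:para} gives $K \epsilon_0 \le 1/3$, any collection of off-path $(n+1)$-tiles contributes total weight at most $w(X)/3$. When $X$ is a leaf-tile or $X = \qT$, every $(n+1)$-tile in $X$ has weight $\epsilon_0 w(X)$ and the same $w(X)/3$ bound applies.

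Part~(i) will then follow at once: the lower bound is \eqref{eq:wXXprime} applied to any single tile of $P$, and for the upper bound the simplicity of $P$ makes its tiles distinct, so $\length_w(P)$ is majorized by the total weight of all $(n+1)$-tiles in $X$, i.e.\ by $w(X) + w(X)/3 = \tfrac{4}{3} w(X)$ in the arc-tile case and by $w(X)/3$ otherwise.

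For part~(ii) I will partition the tiles of $P \coloneqq P^{n+1}_{uv}$ into $A \coloneqq P \cap P^{n+1}_{pq}$ and $B \coloneqq P \setminus P^{n+1}_{pq}$; since each tile of $B$ has weight $\epsilon_0 w(X)$ and $|B| \le K$, the tiles in $B$ contribute at most $w(X)/3$. The key step will be Lemma~\ref{lem:ch+verts}: no tile of $P$ contains any point of $\partial X \setminus \{u, v\}$. Combined with Lemma~\ref{lem:vt}~\ref{item:vt9}, which makes $X'_1$ the unique $(n+1)$-tile containing $p$ and $X'_r$ the unique one containing $q$, this forces $X'_1 \in A$ only when $p \in \{u, v\}$ and $X'_r \in A$ only when $q \in \{u, v\}$. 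If $\{u, v\} = \{p, q\}$ then $P = P^{n+1}_{pq}$ and \eqref{eq:lPp_wX} yields $\length_w(P) = w(X)$, which gives the equality assertion. Otherwise at least one of $X'_1, X'_r$ is missing from $A$, so summing the admissible weights of the remaining tiles on $P^{n+1}_{pq}$ gives $\sum_{X' \in A} w(X') \le 2w(X)/3$ when exactly one endpoint tile lies in $A$, or $\le w(X)/3$ when neither does; in either case $\length_w(P) \le w(X)$.

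The main obstacle is precisely this last bookkeeping. The asymmetric apportionment of weights in \eqref{eq:weight_main_pt}--\eqref{eq:weight_main_diag}---concentrating $w(X)/3$ at each endpoint of $P^{n+1}_{pq}$ and spreading $w(X)/3$ uniformly across the middle---is engineered so that the weight deficit produced when an endpoint tile is absent from $A$ exactly absorbs the $\le w(X)/3$ excess from $B$, and the parameter choice $\epsilon_0 \le 1/(3K)$ in \eqref{eq:para} is precisely what makes this balance close.
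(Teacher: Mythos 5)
Your proposal is correct and takes essentially the same route as the paper: the lower bound in (i) is \eqref{eq:wXXprime} applied to a single tile, the upper bounds come from the $K\epsilon_0 \le 1/3$ budget for off-path tiles added to the total weight $w(X)$ (resp.\ missing-endpoint total $\le 2w(X)/3$ or $w(X)/3$) of tiles on $P^{n+1}_{pq}$, and the key step in (ii) is Lemma~\ref{lem:ch+verts} forcing at least one of the two weight-$w(X)/3$ endpoint tiles $X'_1,X'_r$ off $P^{n+1}_{uv}$ whenever $\{u,v\}\ne\{p,q\}$. Your partition of $P^{n+1}_{uv}$ into $A$ and $B$ is just a mild reorganization of the paper's tile-by-tile count and leads to the identical estimate.
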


\begin{proof}
  \ref{item:w_lenth1}
  If $P$ is a simple
  $(n+1)$-chain as in the statement, then each $(n+1)$-tile
  $X'\sub X$ can appear only once in $P$.  
Recall that the number of $(n+1)$-tiles $X'\sub X$ is at most $K$; so there are at most $K$ such tiles $X'$ with 
$w(X')=\epsilon_0 w(X)$. If $X$ is an arc-tile, then among
the $(n+1)$-tiles with $X'\sub X$ there are two with 
$w(X')=\frac{1}{3} w(X)$ and an additional $r-2$ with  
$w(X')=\frac {1}{3(r-2)}w(X)$. Here $r$ is as in \eqref{eq:weight_main_diag}.
Since 
  $\eps_0K \le \tfrac 13$ by our choice of $\eps_0$, we conclude that 
  \begin{equation*}
    \eps_0 w(X)
    \le   
    \length_w(P)
    \leq 
    \left(K \epsilon_0 +\frac{2}{3} + \frac{r-2}{3(r-2)} \right)w(X) 
    \leq 
    \tfrac 43w(X),
  \end{equation*}
  and \ref{item:w_lenth1} follows.




\smallskip 
\ref{item:w_lenth2}   If $P$ is the simple $(n+1)$-chain in $X$ joining the two main
  vertices $p$ and $q$ of $X$, then we know that
  $\length_w(P)=w(X)$ (see \eqref{eq:lPp_wX}).

Suppose the simple $(n+1)$-chain $P$ joins two
  distinct $n$-vertices $u,v\in \partial X$, but not both $u$ and
  $v$ are main vertices of $X$.   Lemma~\ref{lem:ch+verts} then 
  implies that
at least one of the two  $(n+1)$-tiles $X'\sub X$ with $w(X')=\frac{1}{3}w(X)$ that contain main vertices of $X$ does not belong to $P$. We conclude that
  \begin{equation*}
    \length_w(P)\leq \left(\frac{1}{3} + \frac{r-2}{3(r-2)} + K
    \epsilon_0\right)w(X) \leq w(X). 
  \end{equation*}
  Statement  \ref{item:w_lenth2} follows. 
\end{proof}

It is in fact possible  
that for a  simple $(n+1)$-chain $P$ as in Lemma~\ref{lem:length_vert}~\ref{item:w_lenth1} we have $\length_w(P)>w(X)$. An
example can be  obtained from  Figure~\ref{fig:weights}, where one
can find  a simple $(n+1)$-chain $P$ that consists of $(n+1)$-tiles $X'\sub X$ and   contains  $P^{n+1}_{pq}$
(i.e., the simple $(n+1)$-chain  joining the main vertices $p$ and $q$ of $X$) as  a proper
subchain.

Let $X$ be an $n$-tile, and $v\in \partial X$. Then 
 $v$ is an  $n$-vertex, and so an $(n+1)$-vertex as well. 
If $X'\sub X$ is the unique $(n+1)$-tile containing $v$, then $X'$  is an arc-tile by our construction,  and  $v$ is one of the main vertices of
$X'$. Repeating this argument, we see that for each $k\ge n+1$,
$v$ is a main vertex of the  $k$-tile $X^k\subset X'$ containing
$v$, and so  \eqref{eq:weight_main_pt}  implies that 
$w(X^k) =3^{-k+n+1}w(X')$.

\begin{lemma}
  \label{lem:w_comp}
  Let $n,m\in \N_0$ with
  $\abs{n-m}\leq 1$. Suppose  $X$ is an $n$-tile, $Y$ is an $m$-tile, and  $X\cap Y\neq \emptyset$. Then
  \begin{equation*}
    w(X) \asymp w(Y),
  \end{equation*}
  where $C(\asymp)$ is independent of $n$, $m$, $X$, and $Y$.  
\end{lemma}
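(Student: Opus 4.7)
The strategy is to reduce the claim to the case of two same-level tiles sharing a single vertex, and then to trace those tiles back to a common ancestor through which the weights can be compared.

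First I would dispose of the mixed-level case. If $m=n+1$ and $Y$ is an $(n+1)$-tile meeting the $n$-tile $X$, let $X^*$ be the unique $n$-tile with $Y\subset X^*$ (Lemma~\ref{lem:vt}~\ref{item:vt7}). Estimate \eqref{eq:wXXprime} gives $w(Y)\asymp w(X^*)$, so the claim reduces to showing $w(X)\asymp w(X^*)$ for the pair of $n$-tiles $X$ and $X^*$, which necessarily intersect in a common point of $X\cap Y$. Thus it suffices to prove that if $X\ne Y$ are $n$-tiles with $X\cap Y=\{v\}$ (and hence $v\in\V^n$), then $w(X)\asymp w(Y)$ with a constant independent of $n$.

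For such $X$ and $Y$, I would introduce the birth level $k\in\{1,\dots,n\}$ of $v$, defined by $v\in\V^k\setminus\V^{k-1}$ (using the convention $\V^0=\emptyset$). Iterating Lemma~\ref{lem:vt}~\ref{item:vt7}, let $A_j\supset X$ and $B_j\supset Y$ be the unique $j$-tile ancestors for $k-1\le j\le n$. Since $v\notin\V^{k-1}$, Lemma~\ref{lem:top_T}~\ref{item:top_T2} shows that $v$ lies in the interior of a unique $(k-1)$-tile, whence $A_{k-1}=B_{k-1}=:Z$, and $A_k,B_k$ are the two $k$-tiles in $Z$ meeting at $v$. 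Estimate \eqref{eq:wXXprime} applied inside $Z$ then yields $w(A_k)\asymp w(Z)\asymp w(B_k)$, settling the base case $n=k$.

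The crux of the argument is the persistence claim: for every $j\ge k+1$, the ancestor $A_j$ is an arc-tile with $v$ as one of its two main vertices (and similarly for $B_j$). At the initial step $j=k+1$ this must be verified by a case analysis on $A_k$: either $A_k$ is a leaf-tile (and $v$ is its unique boundary point), an arc-tile with $v$ already a main vertex, or an arc-tile with $v$ a third boundary vertex. In the third case, Lemma~\ref{lem:ch+verts} forces $A_{k+1}\cap[p_k,q_k]=\emptyset$ (where $p_k,q_k$ are the main vertices of $A_k$), so the construction via Lemma~\ref{lem:branch_not_partial} declares $v$ a main vertex of $A_{k+1}$; the other two branches of the construction give $v$ as a main vertex directly. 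For $j\ge k+2$ the induction propagates: if $v$ is a main vertex of $A_{j-1}$, then $A_j$ is the $j$-tile in $A_{j-1}$ containing the main vertex $v$, and \eqref{eq:weight_main_pt} simultaneously yields $w(A_j)=\tfrac13 w(A_{j-1})$ and that $v$ is a main vertex of $A_j$.

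Granted the persistence claim, iterating \eqref{eq:weight_main_pt} from level $k+1$ to $n$ gives $w(A_n)=c_A\cdot 3^{-(n-k-1)}\,w(A_k)$ for a single-step factor $c_A=w(A_{k+1})/w(A_k)\in\{\epsilon_0,\tfrac13\}$, and an analogous identity for $B_n$ with some $c_B\in\{\epsilon_0,\tfrac13\}$. The ratio $w(X)/w(Y)=(c_A/c_B)\,(w(A_k)/w(B_k))$ is then bounded above and below by constants depending only on $\epsilon_0$, and hence ultimately only on $N$. The step I expect to require the most care is the persistence-of-main-vertex claim at $j=k+1$, since it mixes all three branches of the weight-construction and is the one place where Lemmas~\ref{lem:ch+verts} and~\ref{lem:branch_not_partial} interact nontrivially; once that is established, everything else is bookkeeping.
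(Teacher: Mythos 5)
Your proof is correct and follows essentially the same route as the paper's: both reduce the mixed-level case via \eqref{eq:wXXprime}, trace the two tiles up their ancestor chains to the level where the chains merge, spend one application of \eqref{eq:wXXprime} there, and then exploit the fact that the common intersection vertex $v$ persists as a main vertex along both chains so that the weights scale by exactly $1/3$ at each subsequent level, giving the bound $C(\asymp)=1/(3\epsilon_0)^2$. The only cosmetic difference is how the merging level is identified: the paper takes the largest $k$ with $X^k=Y^k$, whereas you take the birth level of $v$ in $\bigcup \V^j$; a short argument using Lemma~\ref{lem:vt}~\ref{item:vt9} shows these describe the same level. Your case analysis for the ``persistence at $j=k+1$'' is precisely the verification of the observation the paper records in the paragraph following \eqref{eq:lPp_wX} (``if $v\in\partial X$ and $X'\sub X$ is the $(n+1)$-tile containing $v$, then $v$ is a main vertex of $X'$''), which the paper's proof then invokes without repeating the case split; so your write-up is a bit more self-contained but not a genuinely different argument.
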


\begin{proof}
 We first consider the case   $n=m$. If $X=Y$ there is nothing to
  prove, and so  we assume  that $X\neq Y$. 
  There are unique tiles
  \begin{align*}
    X^0 &\supset X^1 \supset \dots \supset X^n=X
    \\
    Y^0&\supset Y^1\supset \dots \supset Y^n=Y,
  \end{align*}
  where $X^i$ and $Y^i$ are $i$-tiles for $i=0,\dots,n$. Let
  $k\leq n-1$
  be the largest number such that $X^k=Y^k$. Such a number $k$
exists, since there is only a single $0$-tile
  $X^0=Y^0=\qT$. Then $w(X^k)=w(Y^k)$. Since
  $X^{k+1},Y^{k+1}\subset X^k=Y^k$, we have $w(X^{k+1}) \asymp
  w(Y^{k+1})$ with $C(\asymp) =1/(3\epsilon_0)$,  as follows from 
  \eqref{eq:wXXprime}. 
    
  If  $k+1=n$, we are done. Otherwise, if $k+1<n$, we can again apply  \eqref{eq:wXXprime} and  obtain
  \begin{equation*}
      w(X^{k+2}) \asymp w(Y^{k+2}),
  \end{equation*}
  where $C(\asymp) = 1/(3\epsilon_0)^2$. Since $X^{k+1}\cap Y^{k+1}\supset X\cap Y\ne \emptyset$ and $X^{k+1}\ne Y^{k+1}$, there exists a unique  $(k+1)$-vertex $v$
  such that $X^{k+1}\cap Y^{k+1}=\{v\}$.  Then $X^{k+2+i}\cap Y^{k+2+i} =\{v\}$, the point $v$ is a
  main vertex 
  of $X^{k+2+i}$ and of $Y^{k+2+i}$, and so 
  \begin{equation*}
    w(X^{k+2+i}) = 3^{-i} w(X^{k+2}) 
    \text{ and } 
    w(Y^{k+2+i}) = 3^{-i} w(Y^{k+2})
  \end{equation*}
   for $i=0,\dots,n-k-2$. Thus, $w(X^n) \asymp w(Y^n)$ with  $C(\asymp)
  =1/(3\epsilon_0)^2$.

 If $|n-m|\le 1$, but $n\ne m$, we may assume that $m=n+1$. If $Y'$ is the unique $n$-tile that contains $Y$, then by the first part of the proof we have 
 $w(X)\asymp w(Y')\asymp w(Y)$ with implicit constants independent of the tiles and their levels. The statement follows. 
\end{proof}

%

\section{Construction of the geodesic metric}
\label{sec:constr-geod-metr}

Based on the concept of weights introduced in the previous
section, we can now define a new metric $\varrho$ on our given
tree $(\qT, d)$. For this purpose we first define a sequence of
distance functions $\varrho_n$ on $\qT$.

Let $n\in \N$ and $x,y\in \qT$. 
Then we define 
\begin{align}
  \label{eq:defrhon}
  \varrho_n(x,y) \coloneqq \inf\{\length_w(P):\, &
  \text{$P$ is an  } \\&\text{$n$-chain joining $x$ and $y$}\}. \notag
\end{align}
If  $x\ne y$, let 
$P^n_{xy}$ be the simple, and $P$ be an
arbitrary $n$-chain joining $x$ and $y$. Then we deduce from
Lemma~\ref{lem:ex_chain_unique}~\ref{item:chain2} that
$\length_w(P^n_{xy}) \leq \length_w(P)$. 
It follows that
\begin{equation}
  \label{eq:defrhon2}
  \varrho_n(x,y)=\length_w( P^n_{xy}) 
\end{equation}
for distinct points $x,y\in \qT$.

In this section we will show that the distance functions $\varrho_n$ have a limit $\varrho_n\ra \varrho$ as $n\to \infty$, and that $\varrho$ is a
geodesic metric on $\qT$. We will see in the next section  that
$(\qT,d)$ and  $(\qT,\varrho)$ are quasisymmetrically equivalent. Finally, in Section~\ref{sec:lower-hausd-dimens}
we will show that by choosing the parameter $\epsilon_0$ used in the definition of weights suitably small, we can
arrange  the Hausdorff dimension of $(\qT,\varrho)$ to be 
arbitrarily close to $1$. 

We start with some simple observations.

\begin{lemma}\label{lem:rhonbasics}
  For each $n\in \N$ the following statements are true: 
  \begin{enumerate}

    \smallskip 
  \item
    \label{item:rhon_nsymm}
    $\varrho_n(x,y)=\varrho_n(y,x)$ for $x,y\in \qT$.    
   
    \smallskip 
  \item 
    \label{item:rhon_ntri}
    $\varrho_n(x,y)\le  \varrho_n(x,z)+  \varrho_n(z,y)$ for $x,y,z\in \qT$. 
    
  \end{enumerate}
\end{lemma}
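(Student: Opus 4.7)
The statement splits into symmetry and the triangle inequality, both of which I expect to fall directly out of the definition \eqref{eq:defrhon} together with the additivity of $\length_w$ under concatenation of chains. So I would treat this as a short warm-up lemma rather than one with a hidden subtlety.

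For \ref{item:rhon_nsymm}, my plan is to observe that if $P = (X_1, \dots, X_r)$ is an $n$-chain joining $x$ to $y$, then the reversed sequence $P^{\mathrm{op}} \coloneqq (X_r, \dots, X_1)$ is itself an $n$-chain: the conditions $X_i \cap X_{i+1} \ne \emptyset$ are symmetric in $i$, and now $y \in X_r$ sits at the start while $x \in X_1$ sits at the end, so $P^{\mathrm{op}}$ joins $y$ to $x$. Since $\length_w(P^{\mathrm{op}}) = \length_w(P)$ by \eqref{eq:deflengthP}, reversal is a bijection of $n$-chains joining $x,y$ onto $n$-chains joining $y,x$ that preserves $w$-length. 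Taking the infimum on both sides of \eqref{eq:defrhon} yields $\varrho_n(x,y) = \varrho_n(y,x)$.

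For \ref{item:rhon_ntri}, given $\eps > 0$, I would select an $n$-chain $P_1 = (X_1, \dots, X_r)$ joining $x$ to $z$ with $\length_w(P_1) \le \varrho_n(x,z) + \eps/2$, and an $n$-chain $P_2 = (Y_1, \dots, Y_s)$ joining $z$ to $y$ with $\length_w(P_2) \le \varrho_n(z,y) + \eps/2$. The concatenation $P \coloneqq (X_1, \dots, X_r, Y_1, \dots, Y_s)$ is then an $n$-chain joining $x$ to $y$: the intersection conditions within $P_1$ and within $P_2$ are inherited, and the new condition $X_r \cap Y_1 \ne \emptyset$ holds because $z \in X_r \cap Y_1$. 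From \eqref{eq:deflengthP} we get $\length_w(P) = \length_w(P_1) + \length_w(P_2)$, and hence
\begin{equation*}
\varrho_n(x,y) \le \length_w(P) \le \varrho_n(x,z) + \varrho_n(z,y) + \eps.
\end{equation*}
Letting $\eps \to 0$ gives the triangle inequality.

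There is no real obstacle here; the only small point to be careful about is that the definition allows $x = y$ or $z$ to coincide with $x$ or $y$, and one should check that the concatenation argument still produces a well-defined $n$-chain in those degenerate cases — which it does, because even a single-tile chain containing a given point qualifies. The main work of the section, namely showing that $\varrho_n$ converges and that the limit is a geodesic metric, is deferred to the subsequent lemmas; this lemma is just the book-keeping that the pre-limit objects already inherit the two easy metric axioms.
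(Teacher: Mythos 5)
Your proposal is correct and follows essentially the same route as the paper: reverse the chain for symmetry, concatenate near-optimal chains (noting $z\in X_r\cap Y_1$) for the triangle inequality, then pass to the infimum. The only cosmetic difference is that you phrase the infimum step via an explicit $\eps$-argument, whereas the paper simply takes the infimum over all pairs of chains directly; the substance is identical.
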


This shows that  $\varrho_n$ is symmetric and satisfies the triangle
inequality. However, it is not a metric. Indeed, it is immediate from the definition that 
   \begin{equation} \label{eq:defrhon1}
   \varrho_n(x,x) =\inf \{w(X) : X \text{ is an $n$-tile with $x\in X$}
   \}>0
    \end{equation}
for $x\in \qT$.

\begin{proof}[Proof of Lemma~\ref{lem:rhonbasics}]

  \ref{item:rhon_nsymm}
  Let $x,y\in\qT$ be arbitrary.
  The
  $n$-tiles $X_1, \dots, X_r$
  then 
  form an $n$-chain $P$ joining $x$
  and $y$ if and only if the $n$-tiles $X_r, \dots, X_1$ form an
  $n$-chain $\widetilde P$ joining $y$ and $x$. Moreover, we have
  $\length_w(P)= \length_w(\widetilde P)$. If we take  the
  infimum over all such $P$ here, then  \ref{item:rhon_nsymm} follows.

  \smallskip
  \ref{item:rhon_ntri}
  Let $x,y,z\in\qT$ be arbitrary. Suppose that the $n$-tiles
  $X_1, \dots, X_r$ form an $n$-chain $P$ joining $x$ and $z$,
  and the $n$-tiles $Y_1, \dots, Y_s$ form an $n$-chain
  $\widetilde P$ joining $z$ and $y$. Then the $n$-tiles
  \begin{equation*}
    X_1, \dots, X_r, Y_1, \dots, Y_s
  \end{equation*}
  form an $n$-chain $Q$ joining $x$ and $y$. Note that
  $X_r \cap Y_1\ne \emptyset$, because $z\in X_r \cap Y_1$.  We
  have $\length_w(Q)= \length_w( P)+ \length_w(\widetilde P)$.
  If we  take the infimum over all $P$ and $\widetilde P$ here,
  then \ref{item:rhon_ntri} follows.
\end{proof}

We now prepare the proof of the convergence of the sequence 
$\{\varrho_n\}$.

\begin{lemma}
  \label{lem:simpknchanest}
  Let $n,k\in \N$ with $k>n$, and $x,y\in \qT$ with $x\ne y$ be
  arbitrary. Then we have
  \begin{equation*}
    \length_w(P^k_{xy})
    \le
    \length_w(P^n_{xy})+\tfrac 12w(X)+\tfrac 12 w(Y),
  \end{equation*}
  where $X$ is the first tile in $P^n_{xy}$ and $Y$ the last tile
  in $P^n_{xy}$.
\end{lemma}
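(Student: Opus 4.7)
The plan is to first establish a one-step bound
$$\length_w(P^{n+1}_{xy}) \le \length_w(P^n_{xy}) + \tfrac{1}{3}\bigl(w(X) + w(Y)\bigr),$$
and then iterate $k - n$ times, exploiting the geometric decay of the weights of the first and last tiles under refinement.

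For the one-step bound, I would apply Lemma~\ref{lem:subdiv_nchain} to decompose $P^{n+1}_{xy}$ as the concatenation of simple $(n+1)$-chains $P^{n+1}_{v_{i-1}v_i}$ for $i = 1,\dots,r$, where $X_1 = X,\dots,X_r = Y$ are the $n$-tiles of $P^n_{xy}$, $v_0 = x$, $v_r = y$, and $v_i$ is the unique point in $X_i \cap X_{i+1}$ for $1 \le i \le r-1$. Each interior sub-chain (those with $2 \le i \le r-1$) joins two distinct $n$-vertices in $\partial X_i$, so Lemma~\ref{lem:length_vert}~\ref{item:w_lenth2} gives $\length_w(P^{n+1}_{v_{i-1}v_i}) \le w(X_i)$. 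The two end sub-chains consist of tiles contained in $X$ and $Y$ respectively, so Lemma~\ref{lem:length_vert}~\ref{item:w_lenth1} bounds them by $\tfrac{4}{3}w(X)$ and $\tfrac{4}{3}w(Y)$. Summing and writing $\tfrac{4}{3} = 1 + \tfrac{1}{3}$ yields the one-step bound (the edge cases $r = 1$ and $r = 2$ are checked directly from the same estimates).

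To iterate, I would let $X_m$ and $Y_m$ denote the first and last tiles of $P^m_{xy}$ for $m \ge n$ and verify that $X_{m+1} \subset X_m$ (and similarly for $Y$): this follows from the construction in Lemma~\ref{lem:ex_chain_unique}, since the first $(m+1)$-vertex on $[x,y]$ lies no further from $x$ than the first $m$-vertex, so the $(m+1)$-tile supplying the initial subarc is contained in its $m$-tile counterpart. Then \eqref{eq:wXXprime} yields $w(X_{n+j}) \le (1/3)^j w(X)$ and $w(Y_{n+j}) \le (1/3)^j w(Y)$. Applying the one-step bound $k-n$ times and telescoping,
$$\length_w(P^k_{xy}) \le \length_w(P^n_{xy}) + \tfrac{1}{3}\bigl(w(X) + w(Y)\bigr) \sum_{j=0}^{k-n-1} (1/3)^j \le \length_w(P^n_{xy}) + \tfrac{1}{2}\bigl(w(X) + w(Y)\bigr),$$
using $\sum_{j=0}^\infty (1/3)^j = 3/2$.

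The main subtlety is calibrating the one-step bound: Lemma~\ref{lem:length_vert}~\ref{item:w_lenth1} supplies the slightly lossy constant $\tfrac{4}{3}$ for the two end sub-chains (instead of the sharp $1$ available in Lemma~\ref{lem:length_vert}~\ref{item:w_lenth2} for interior sub-chains), but this controlled slack combines with the factor-of-$3$ contraction of the end-tile weights under refinement to produce exactly the clean constant $\tfrac{1}{2}$ after geometric summation.
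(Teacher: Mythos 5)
Your proof is correct and follows essentially the same route as the paper: the same one-step bound via Lemma~\ref{lem:subdiv_nchain} together with the two parts of Lemma~\ref{lem:length_vert}, followed by iteration and a geometric-series estimate using the nesting of the first and last tiles and \eqref{eq:wXXprime}. The only cosmetic difference is your justification for the nesting $X_{m+1}\subset X_m$ via the ordering of vertices on $[x,y]$; the paper gets this more directly from the concatenation in Lemma~\ref{lem:subdiv_nchain}, since the first tile of $P^{m+1}_{xy}$ is the first tile of the sub-chain $P'_1$, which by construction consists of $(m+1)$-tiles contained in $X_m$.
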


\begin{proof} 
  Let $n\in \N$ and $x,y\in \qT$ with $x\ne y$ be
  arbitrary. Suppose the simple $n$-chain $P=P^n_{xy}$
  joining $x$ and $y$ is given by the $n$-tiles $X_1,\dots,X_r$,
  where $r\in \N$. Let $X=X_1$ be the first tile and $Y=X_r$ be
  the 
  last tile in $P$. Then $x\in X$ and $y\in Y$. For
  $i=1,\dots, r-1$ let $v_i$ be the $n$-vertex where $X_i$ and
  $X_{i+1}$ intersect.  We also set $v_0\coloneqq x$ and
  $v_{r}\coloneqq y$.
  
  For $i=1, \dots, r$ let $P_i$ be the unique simple
  $(n+1)$-chain joining $v_{i-1}$ and $v_i$. Since
  $[v_{i-1},v_i]\sub X_i$, the chain $P_i$ consists of
  $(n+1)$-tiles contained in $X_i$. If we concatenate
  $P_1,\dots,P_r$, then we obtain the simple $(n+1)$-chain
  $ P^{n+1}_{xy}$ joining $x$ and $y$ (see
  Lemma~\ref{lem:subdiv_nchain}~\ref{item:subdiv_nchain1}). 

  By Lemma~\ref{lem:length_vert}~\ref{item:w_lenth2} we have
  $\length_w(P_i) \leq w(X_i)$ for $i=2, \dots, r-1$, because in
  this case the simple chain $P_i$ joins two distinct points in
  $\partial X_i$. We also have
  $\length_w(P_1)\le \tfrac 43 w(X_1)$, and
  $\length_w(P_r)\le \tfrac43 w(X_r)$ by
  Lemma~\ref{lem:length_vert}~\ref{item:w_lenth1}. It follows
  that
  \begin{align*}
    \length (P^{n+1}_{xy})
    & =
    \sum_{i=1}^r \length_w(P_i)
    \\
    &\leq
    \sum_{i=1}^rw(X_i)+\tfrac13 w(X_1)+\tfrac13 w(X_r)
    \\
    &=
    \length_w(P^n_{xy})+\tfrac13 w(X)+\tfrac13 w(Y).
  \end{align*}
  We now iterate this procedure by increasing the level by $1$ in
  each step until we reach level $k$.  In this way, we see that
  \begin{equation}
    \label{eq:knchain}
    \length_w (P^{k}_{xy})
    \le
    \length_w(P^n_{xy})+\tfrac13 \sum_{i=n}^{k-1} (w(X^i)+w(Y^i)), 
  \end{equation} 
  where $X^i$ and $Y^i$ are $i$-tiles for $i=n, \dots, k-1$ with 
  \begin{equation*}
    x\in X^{k-1}\sub \dots \sub X^{n+1}  \sub X^n=X 
  \end{equation*}
  and
  \begin{equation*}
     y\in Y^{k-1}\sub \dots  \sub Y^{n+1} \sub Y^n=Y. 
  \end{equation*}
  It follows from \eqref{eq:wXXprime} and these inclusions that
  $$ w(X^i)\le 3^{n-i} w(X) \text{ and } w(Y^i)\le 3^{n-i} w(Y) $$ for $i=n, \dots, k-1$, and so 
  \begin{align*} \sum_{i=n}^{k-1} (w(X^i)+w(Y^i)) &\le (w(X)+w(Y)) \sum_{i=n}^\infty 3^{n-i}\\ & \le \tfrac 32 (w(X)+w(Y)). 
  \end{align*}
The statement now follows from \eqref{eq:knchain}. 
\end{proof}

\begin{lemma}
  \label{lem:rhonlimexis} For all $x,y\in \qT$ the limit
 $$
    \lim_{n\to \infty} \varrho_n(x,y)\in [0, \infty)
$$
  exists. 
\end{lemma}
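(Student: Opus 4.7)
The plan is to combine Lemma~\ref{lem:simpknchanest} with the universal weight decay $w(X)\le 3^{-n}$ for every $n$-tile $X$ (which follows by iterating the upper bound in \eqref{eq:wXXprime} starting from $w(X^0)=w(\qT)=1$) to show that $\{\varrho_n(x,y)\}$ is essentially non-increasing modulo an error that tends to $0$.

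First I would dispose of the diagonal case $x=y$. Every point $x\in\qT$ lies in at least one $n$-tile (since $\qT$ is the union of its $n$-tiles), so formula \eqref{eq:defrhon1} together with the bound $w(X)\le 3^{-n}$ gives $0<\varrho_n(x,x)\le 3^{-n}$, and therefore $\varrho_n(x,x)\to 0$ as $n\to\infty$.

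For distinct $x,y\in\qT$, Lemma~\ref{lem:simpknchanest} applied to the first and last tiles $X,Y$ of $P^n_{xy}$, together with $w(X),w(Y)\le 3^{-n}$, yields
\begin{equation*}
\varrho_k(x,y)\;\le\;\varrho_n(x,y)+3^{-n}\qquad\text{for all }k>n.
\end{equation*}
Taking $n=1$ shows that the whole sequence is bounded by $\varrho_1(x,y)+1/3<\infty$. For the existence of the limit, I would fix $n$, take $\limsup_{k\to\infty}$ of both sides, and then take $\liminf_{n\to\infty}$ of the right-hand side to obtain
\begin{equation*}
\limsup_{k\to\infty}\varrho_k(x,y)\;\le\;\liminf_{n\to\infty}\bigl(\varrho_n(x,y)+3^{-n}\bigr)\;=\;\liminf_{n\to\infty}\varrho_n(x,y).
\end{equation*}
Since $\liminf\le\limsup$ is automatic, the two coincide, so the limit exists. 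It lies in $[0,\infty)$ because each $\varrho_n(x,y)$ is non-negative and the sequence is bounded.

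There is no real obstacle: the proof is essentially bookkeeping once Lemma~\ref{lem:simpknchanest} is available. The only subtlety worth flagging is the universal bound $w(X)\le 3^{-n}$, which comes directly from iterating \eqref{eq:wXXprime} along the unique chain of tile containments $\qT=X^0\supset X^1\supset\dots\supset X^n=X$ guaranteed by Lemma~\ref{lem:vt}~\ref{item:vt7}. It is precisely this uniform geometric decay of weights that makes the $3^{-n}$ error terms in Lemma~\ref{lem:simpknchanest} summable and allows the $\limsup$--$\liminf$ squeeze to go through.
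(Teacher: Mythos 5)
Your proof is correct and follows essentially the same route as the paper: both treat $x=y$ via the universal decay $w(X)\le 3^{-n}$, and for $x\ne y$ both apply Lemma~\ref{lem:simpknchanest} with that decay to obtain $\varrho_k(x,y)\le\varrho_n(x,y)+3^{-n}$ for $k\ge n$, then squeeze $\limsup$ against $\liminf$. The only cosmetic difference is that you explicitly note the boundedness from the $n=1$ case, which the paper leaves implicit in observing $\limsup_k\varrho_k(x,y)\le\varrho_n(x,y)+3^{-n}<\infty$.
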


\begin{proof} If $n\in \N$ and  $X^n$
  is an  $n$-tile, then $w(X^n)\le 3^{-n}$,
  as follows from our definition of weights. This implies that if $x=y$, then 
  $0\le \varrho_n(x,y)\le 3^{-n}$ and so
  $\lim_{n\to \infty}\varrho_n(x,y)=0$.

  Suppose that $x\ne y$. Then it follows from \eqref{eq:defrhon2}
  and Lemma~\ref{lem:simpknchanest} that
  \begin{equation*}
    \varrho_k(x,y)\le \varrho_n(x,y)+ 3^{-n}
  \end{equation*}
  for $k,n\in \N$ with $k\ge n$.  Letting $k\to \infty$, we see
  that
  \begin{equation*}
    \limsup_{k\to \infty} \varrho_k(x,y)
    \le  
    \varrho_n(x,y)+ 3^{-n}<\infty.
  \end{equation*}
  Now letting $n\to \infty$, we conclude that
  \begin{equation*}
    \limsup_{n\to \infty} \varrho_n(x,y)
    \le
    \liminf_{n\to \infty} \varrho_n(x,y). 
  \end{equation*}
  So
  \begin{equation*}
    \limsup_{n\to \infty} \varrho_n(x,y)
    =
    \liminf_{n\to \infty} \varrho_n(x,y) <\infty. 
  \end{equation*}
  Hence
  $\displaystyle \lim_{n\to \infty} \varrho_n(x,y)$
  exists and is
  a non-negative (finite) number.
\end{proof}

We now define
\begin{equation}
  \label{eq:def_rho}
  \varrho(x,y) \coloneqq \lim_{n\to \infty} \varrho_n(x,y)
\end{equation}
for $x,y\in \qT$.
We know from Lemma~\ref{lem:rhonbasics} that $\varrho$ is a
non-negative symmetric function that satisfies the triangle
inequality.
In the proof of
Lemma~\ref{lem:rhonlimexis} we have seen that $\varrho(x,x)=0$
for $x\in \qT$.

In order to show that $\varrho$ is a metric on $\qT$, it remains
to verify that $\varrho(x,y)>0$ whenever $x,y\in \qT$, $x\ne y$.
To this end, the following estimates will be useful.

\begin{lemma}
  \label{rho_n_X} Let  $n\in \N$, and $X$ be an $n$-tile. Suppose  that $X$ is an  arc-tile.
  \begin{enumerate}
   \item
    \label{item:rho_Xn2}
    If $p$ and $q$ are the main vertices of $X$, then 
    \begin{equation*}
      \varrho_k(p,q)=w(X) 
    \end{equation*}
    for all $k\ge n$. 
 
  \item
    \label{item:rho_Xn1}
    If $u,v\in \partial X$ are two distinct $n$-vertices, then
    \begin{equation*}
      \epsilon_0 w(X) \leq   \varrho_k(u,v) \leq w(X) 
    \end{equation*}
    for all $k\ge n$. 
   \end{enumerate}
\end{lemma}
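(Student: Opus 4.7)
The plan is to establish both parts of the lemma jointly by induction on $d\coloneqq k-n\geq 0$, with inductive hypothesis $H(d)$ asserting the full statement of the lemma at every level $m\in\N_0$ for every $m$-arc-tile $Y$, whenever $k$ in the conclusion is taken to be $m+d$.

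For the base case $d=0$ (so $k=n$), I would first show that the open arc $(u,v)$ contains no $n$-vertex. Since $(u,v)\subset X$ and $X\cap\V^n=\partial X$, this amounts to checking $(u,v)\cap\partial X=\emptyset$. Lemma~\ref{lem:ch+verts} applied to the arc-tile $X$ with boundary vertices $u,v$ gives $|P^{n+1}_{uv}|\cap\partial X\subset\{u,v\}$; since $[u,v]\subset|P^{n+1}_{uv}|$, this yields $(u,v)\cap\partial X=\emptyset$. Consequently the simple $n$-chain $P^n_{uv}$ is just $(X)$, so $\varrho_n(u,v)=w(X)$, which settles both statements of the lemma at $k=n$.

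For the inductive step $H(d)\Rightarrow H(d+1)$: fix an $n$-arc-tile $X$ and distinct $u,v\in\partial X$, and set $k=n+d+1$. By Lemma~\ref{lem:subdiv_nchain}, the chain $P^k_{uv}$ decomposes as the concatenation of sub-chains $P^k_{v_{j-1},v_j}$ for $j=1,\dots,s$, where $X'_j$ is the $(n+1)$-tile of $P^{n+1}_{uv}$ containing $[v_{j-1},v_j]$ (with $v_0=u$ and $v_s=v$). The key structural point I would establish is that the two end tiles $X'_1$ and $X'_s$ admit $\{u,v_1\}$ and $\{v_{s-1},v\}$, respectively, as their main vertices. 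This rests on Lemma~\ref{lem:branch_not_partial}: the distinguished vertex $u'\in\partial X'_1\setminus\{u\}$ satisfies $[u,u']\subset[u,v'']$ for every $v''\in\partial X\setminus\{u\}$; in particular, $u'$ lies on $[u,v]$ as the first $(n+1)$-vertex after $u$, forcing $u'=v_1$. Either way---$u\in\{p,q\}$ (so $X'_1$ lies on $[p,q]$) or $u\notin\{p,q\}$ (so $X'_1$ avoids $[p,q]$)---the construction from Section~\ref{sec:weights-vertices} designates exactly $u$ and $u'=v_1$ as the main vertices of $X'_1$, and symmetrically for $X'_s$.

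Applying $H(d)$ at level $n+1$ now yields all the needed estimates for the sub-chains: part~(i) gives $\varrho_k(u,v_1)=w(X'_1)$ and $\varrho_k(v_{s-1},v)=w(X'_s)$, while part~(ii) gives $\epsilon_0 w(X'_j)\le\varrho_k(v_{j-1},v_j)\le w(X'_j)$ for $1<j<s$. Summing over $j$, the upper bound in~(ii) follows from $\varrho_k(u,v)\le\sum_j w(X'_j)=\length_w(P^{n+1}_{uv})\le w(X)$ via Lemma~\ref{lem:length_vert}~\ref{item:w_lenth2}, and the lower bound in~(ii) follows from $\varrho_k(u,v)\ge w(X'_1)+w(X'_s)\ge 2\epsilon_0 w(X)\ge\epsilon_0 w(X)$ via~\eqref{eq:wXXprime}. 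For part~(i) with $(u,v)=(p,q)$, the construction of the chain $P^{n+1}_{pq}$ forces every middle tile $X'_j$ to have $v_{j-1},v_j$ as its main vertices, so $H(d)$-part~(i) applies to all sub-chains and summation yields $\varrho_k(p,q)=\sum_j w(X'_j)=w(X)$ by~\eqref{eq:lPp_wX}. The main obstacle is precisely the structural identification $u'=v_1$: without the tight compatibility between Lemma~\ref{lem:branch_not_partial} and the explicit choice of main vertices at each level, the two end sub-chains would not contribute the full weights $w(X'_1)$ and $w(X'_s)$ uniformly in $k$, and the lower bound in~(ii) would decay geometrically in $k-n$ rather than remaining at the uniform scale $\epsilon_0 w(X)$.
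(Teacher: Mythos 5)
Your proof is correct and follows essentially the same strategy as the paper's: decompose $P^k_{uv}$ through the intermediate $(n+1)$-vertices via Lemma~\ref{lem:subdiv_nchain}, use Lemma~\ref{lem:branch_not_partial} to recognize that $u$ and $v_1$ (respectively $v_{s-1}$ and $v$) are the main vertices of the two end tiles, and then propagate the weights via \eqref{eq:lPp_wX} and Lemma~\ref{lem:length_vert}~\ref{item:w_lenth2}. You simply phrase the recursion as an explicit joint induction on $k-n$, whereas the paper iterates level by level for (i) and for the lower bound in (ii) applies part (i) directly to the single subchain $P^k_{uu'}$ rather than summing both end contributions — a cosmetic rather than substantive difference. (One small simplification you could have used in the base case: since $u,v\in X$, the singleton $(X)$ is an $n$-chain joining $u$ and $v$, so Lemma~\ref{lem:ex_chain_unique}~\ref{item:chain2} forces $P^n_{uv}=(X)$ directly, with no need for Lemma~\ref{lem:ch+verts}.)
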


\begin{proof}
  \ref{item:rho_Xn2}
  Suppose that $p$ and $q$ are the main vertices of $X$. Then the
  simple $n$-chain $P^n$ joining $p$ and $q$ is given by the
  single tile $X$. Thus $\varrho_n(p,q) = \length_w(P^n) = w(X)$,
  and the statement is true for $k=n$.

  Suppose  the simple $(n+1)$-chain $P^{n+1}$ joining $p$ and
  $q$ is given by the $(n+1)$-tiles $X'_1,\dots,X'_r$, where
  $r\in \N$. Then
  \begin{equation*}
    \varrho_{n+1}(p,q)
    =
    \length_w(P^{n+1})
    =
    \sum_{i=1}^r w(X'_i)
    =
    w(X)
  \end{equation*}
  by \eqref{eq:lPp_wX} and \eqref{eq:defrhon2}.

  For $i=1, \dots, r-1$ let $v_i$ be the $(n+1)$-vertex where
  $X'_i$ and $X'_{i+1}$ intersect, and set $v_0\coloneqq p$, $v_r\coloneqq q$. Then 
  $v_{i-1}$ and $v_i$ are the main vertices of $X'_i$ for
  $i=1, \dots, r$ (see the discussion after \eqref{eq:lPp_wX}).

  Lemma~\ref{lem:subdiv_nchain}~\ref{item:subdiv_nchain1} implies
  that the simple $(n+2)$-chain $P^{n+2}$ joining $p$ and $q$ is
  obtained by replacing in $P^{n+1}$ the set $X'_i$ by the simple
  $(n+2)$-chain $P^{n+2}_i$ joining $v_{i-1}$ and $v_i$ for
  $i=1, \dots, r$. Since the main vertices of $X'_i$ are
  $v_{i-1}$ and $v_i$, it follows from \eqref{eq:lPp_wX} that
  $\length(P^{n+2}_i) = w(X'_i)$. This implies that
  \begin{align*}
    \varrho_{n+2}(p,q)
    = &
    \length_w(P^{n+2}) =\sum_{i=1}^r \length_w(P^{n+2}_i) 
    =
    \sum_{i=1}^r w(X'_i) 
    \\
    =&
    \length_w(P^{n+1}) = \varrho_{n+1}(p,q) 
    =
    w(X). 
  \end{align*}
  It is clear that we can repeat this argument for higher and
  higher levels, and \ref{item:rho_Xn2} follows.
 
  \smallskip
  \ref{item:rho_Xn1}
Suppose  $u,v\in \partial X$ are distinct $n$-vertices. Then the
desired upper bound follows from a 
reasoning similar to that in \ref{item:rho_Xn2} if we use the first part of
  Lemma~\ref{lem:length_vert}~\ref{item:w_lenth2} instead of
  \eqref{eq:lPp_wX} on each level.
 
In order   to verify the lower bound, we may assume $k\ge n+1$, because
  $\varrho_n(u,v)=w(X)$. Let $P^{n+1}$ be the simple
  $(n+1)$-chain joining $u$ and $v$ given by the $(n+1)$-tiles
  $Y'_1,\dots,Y'_s$, where $s\in \N$.  We know that $s\ge 3$ by
  Lemma~\ref{lem:delta_small_tiles_child}~\ref{item:delta_X_X1}
  and our choice of $\delta$.  Let $u'$ be the $(n+1)$-vertex
  where $Y'_1$ and $Y'_2$ intersect. Then $u'$ is the point given
  by Lemma~\ref{lem:branch_not_partial}. This means $u$ and $u'$
  are the main vertices of $Y'_1$ (see the discussion after
  \eqref{eq:weight_w_not_vw}). For each $k\ge n+1$ the simple
  $k$-chain $P^k_{uv}$ joining $u$ and $v$ contains the simple
  chain $P^k_{uu'}$ joining $u$ and $u'$ as a subchain, which follows from Lemma~\ref{lem:subdiv_nchain}~\ref{item:subdiv_nchain1}. Applying
  \ref{item:rho_Xn2} to the tile $Y_1'$, we see  that
  \begin{align*}
     \varrho_k(u,v)
     &=
     \length_w(P^k_{uv}) \geq \length_w(P^k_{uu'}) 
     \\
     & =\varrho(u,u')= w(Y'_1) \geq \epsilon_0 w(X). 
  \end{align*}
  This completes the proof of \ref{item:rho_Xn1}.
\end{proof}

We now introduce a quantity that will allow us to give good
estimates for distances of points in $\qT$.  For distinct
$x,y\in \qT$ we define
\begin{align}
  \label{eq:defm}
  m(x,y) 
  \coloneqq 
  \max\{n\in \N_0 : &\text{ there exist $n$-tiles $X$ and $Y$}
  \\\notag
  &\text{ with } x\in X,\, y\in Y, 
  \text{ and } X\cap Y \neq \emptyset\}. 
\end{align}
This maximum exists, because by  \eqref{eq:Xn_dn}  for $n$-tiles $X^n$ we have
\begin{equation*}
  \diam (X^n)\asymp \delta^n\to 0 \text{ as $n\to \infty$}, 
\end{equation*}
where $d$ is the underlying metric for the diameter of $X^n$.   

\begin{lemma}
  \label{lem:rho_w_m}
  Let $x,y\in \qT$ be distinct points, and $m\coloneqq m(x,y)\in \N_0$. Then
 \begin{equation*}
   d(x,y)
   \asymp
   \delta^m
   \text{ and }
   \varrho(x,y)
   \asymp
   w(X^m),
  \end{equation*}
  where $X^m$ is any $m$-tile containing $x$. Here the implicit
  constants $C(\asymp)$ are independent of $x$ and $y$.
\end{lemma}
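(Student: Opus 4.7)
The plan is to prove the two estimates separately: the bound $d(x,y) \asymp \delta^m$ is essentially combinatorial, the upper bound $\varrho(x,y) \lesssim w(X^m)$ follows from Lemma~\ref{lem:simpknchanest}, and the lower bound $\varrho(x,y) \gtrsim w(X^m)$ will be the main obstacle. By definition of $m = m(x,y)$, there exist $m$-tiles $X_0, Y_0$ with $x \in X_0$, $y \in Y_0$, and $X_0 \cap Y_0 \ne \emptyset$. The upper bound on $d(x,y)$ is immediate from \eqref{eq:Xn_dn}: $d(x,y) \le \diam(X_0) + \diam(Y_0) \lesssim \delta^m$. For the lower bound, the maximality of $m$ forces any two $(m+1)$-tiles $X',Y'$ with $x \in X'$ and $y \in Y'$ to be disjoint, and \eqref{eq:distXY} then gives $d(x,y) \ge \dist(X',Y') \ge \delta^{m+1} \asymp \delta^m$.

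For $\varrho(x,y) \lesssim w(X^m)$: The two-tile $m$-chain $(X_0,Y_0)$ shows $\varrho_m(x,y) \le w(X_0) + w(Y_0)$, and two applications of Lemma~\ref{lem:w_comp} (first to compare $w(X_0)$ with $w(X^m)$ through the shared point $x$, then $w(Y_0)$ with $w(X_0)$ through $X_0 \cap Y_0 \ne \emptyset$) give $\varrho_m(x,y) \lesssim w(X^m)$. By Lemma~\ref{lem:simpknchanest}, for every $k \ge m$ we have $\varrho_k(x,y) \le \varrho_m(x,y) + \tfrac12(w(A) + w(B))$, where $A$ and $B$ are the first and last tiles of $P^m_{xy}$; both are $m$-tiles containing $x$ or $y$, so $w(A), w(B) \asymp w(X^m)$. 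Passing to the limit $k \to \infty$ yields the desired estimate. The edge case $m = 0$ is handled directly using $\varrho_0(x,y) \le w(\qT) = 1$.

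The hard part will be $\varrho(x,y) \gtrsim w(X^m)$. The idea is to isolate an intermediate $(m+1)$-tile whose weight is uniformly comparable to $w(X^m)$ and on which the estimate of Lemma~\ref{rho_n_X}~\ref{item:rho_Xn1} propagates to all higher levels. Write $P^{m+1}_{xy} = (X'_1, \ldots, X'_s)$: by maximality of $m$, $X'_1 \cap X'_s = \emptyset$, forcing $s \ge 3$. Set $\{v_1\} = X'_1 \cap X'_2$ and $\{v_2\} = X'_2 \cap X'_3$; both are distinct $(m+1)$-vertices in $\partial X'_2$, and both lie on $(x,y)$. To bound $w(X'_2)$ from below, let $\tilde X_i$ denote the $m$-tile containing $X'_i$: since $v_1 \in X'_1 \cap X'_2 \subset \tilde X_1 \cap \tilde X_2$ and $x \in \tilde X_1$, Lemma~\ref{lem:w_comp} gives $w(\tilde X_2) \asymp w(\tilde X_1) \asymp w(X^m)$, and then \eqref{eq:wXXprime} yields $w(X'_2) \ge \epsilon_0 w(\tilde X_2) \gtrsim w(X^m)$. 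Lemma~\ref{rho_n_X}~\ref{item:rho_Xn1} now gives $\varrho_k(v_1,v_2) \ge \epsilon_0 w(X'_2) \gtrsim w(X^m)$ for all $k \ge m+1$. Finally, using the explicit description of simple chains via intermediate $k$-vertices on $(x,y)$ (from the proof of Lemma~\ref{lem:ex_chain_unique}), $P^k_{v_1 v_2}$ is a contiguous subchain of $P^k_{xy}$ for every $k \ge m+1$, so
\begin{equation*}
  \varrho_k(x,y) = \length_w(P^k_{xy}) \ge \length_w(P^k_{v_1 v_2}) = \varrho_k(v_1,v_2) \gtrsim w(X^m).
\end{equation*}
Letting $k \to \infty$ completes the proof.
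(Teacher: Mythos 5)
Your proposal is correct and follows essentially the same approach as the paper: the $d$-estimate from \eqref{eq:Xn_dn} and \eqref{eq:distXY}, the upper bound on $\varrho$ from Lemma~\ref{lem:simpknchanest} together with Lemma~\ref{lem:w_comp}, and the lower bound by isolating the middle tile $X'_2$ of $P^{m+1}_{xy}$ and propagating Lemma~\ref{rho_n_X}~\ref{item:rho_Xn1} along subchains. The only difference is cosmetic: the paper compares $w(X_2)\asymp w(X_1)\asymp w(X^m)$ directly via two applications of Lemma~\ref{lem:w_comp}, while you detour through the parent $m$-tiles and \eqref{eq:wXXprime}; both are fine.
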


\begin{proof}  
  By definition of $m$ there exist $m$-tiles $X$ and $Y$ with
  $x\in X$, $y\in Y$, and $X\cap Y\neq \emptyset$.
Then   by \eqref{eq:Xn_dn},
  \begin{equation*}
    d(x,y) \leq \diam_d(X) + \diam_d(Y) \asymp \delta^m. 
  \end{equation*}
  Here the implicit constant is independent of $x$ and $y$. 

  For the opposite inequality, consider $(m+1)$-tiles $X'$ and
  $Y'$ containing $x$ and $y$, respectively. Then $X'$ and $Y'$
  are disjoint by the definition of $m$, and from
  \eqref{eq:distXY} we obtain
   \begin{equation*}
     d(x,y)
     \geq
     \dist_d(X',Y')
     \ge
     \delta^{m+1}
     \asymp
     \delta^m.
  \end{equation*}
  Again the implicit constant is  independent of $x$ and
  $y$. The first statement $d(x,y)\asymp \delta^m$ follows.

  To see the statement for $\varrho$, note that one of the three chains $X$ or $Y$ or $X,Y$ is
  the simple $m$-chain joining $x$ and $y$. In any case, we
  have
  \begin{align*}
    \varrho(x,y)
    &=
      \lim_{k\to
      \infty}\varrho_k(x,y)
      \leq
      \varrho_m(x,y)  +\tfrac{1}{2}w(X)+\tfrac{1}{2}w(Y)
    \\
    & \le
      \tfrac{3}{2}(w(X) + w(Y))
      \asymp
      w(X),
  \end{align*}
  as follows from Lemma~\ref{lem:simpknchanest} and
  Lemma~\ref{lem:w_comp}. The latter lemma also implies that the
  upper bound $\varrho(x,y)\lesssim w(X)$ remains true if we
  replace $X$ with another $m$-tile containing $x$ (there are at
  most two such $m$-tiles).

  To prove the other inequality, let $X_1, \dots, X_r$ with
  $r\in \N$ be the simple $(m+1)$-chain joining $x$ and $y$. Then
  $x\in X_1$ and $y\in X_r$, and so we have
  $X_1\cap X_r=\emptyset$ by definition of $m$. Hence $r\geq
  3$. Let $u$ be the $(m+1)$-vertex in $X_1\cap X_2$ and $v$ be
  the $(m+1)$-vertex in $X_2\cap X_3$.

 Suppose that $k \geq m+1$, and consider  the simple $k$-chain $P^k_{xy}$ joining  $x$ and $y$. Then $P^k_{xy}$ contains the simple $k$-chain $P^k_{uv}$ joining $u$ and $v$ as a subchain, 
  and we see that 
  \begin{align*}
    \varrho_k(x,y) &=\length_w(P^k_{xy})\ge \length_w(P^k_{uv})
    =
    \varrho_k(u,v) \\
    &\asymp
    w(X_2)
    \asymp 
    w(X_1)
    \asymp
    w(X).
  \end{align*}
  Here Lemma~\ref{rho_n_X}~\ref{item:rho_Xn1} and
  Lemma~\ref{lem:w_comp} were used. We conclude that
  \begin{equation*}
    \varrho(x,y)
    =
    \lim_{k\to \infty} \varrho_k(x,y)
    \gtrsim  w(X).
  \end{equation*}
  In the previous inequalities, all implicit constants are
  independent of $x$ and $y$. The estimate for $\varrho$ follows.
\end{proof}

We are now ready for the main result of this section.

\begin{lemma}
  \label{lem:rho_metric}
  The distance function $\varrho$ is a geodesic metric on $\qT$.
\end{lemma}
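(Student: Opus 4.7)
The plan is to verify the metric axioms for $\varrho$, show that the identity map from $(\qT,d)$ to $(\qT,\varrho)$ is a homeomorphism (so that $(\qT,\varrho)$ is compact), and then construct an exact midpoint $z$ with $\varrho(x,z)=\varrho(z,y)=\varrho(x,y)/2$ for every pair of distinct points $x,y\in\qT$; the geodesic property follows from the standard fact that a complete metric space admitting midpoints is geodesic.

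First I would check that $\varrho$ is a metric. Symmetry and the triangle inequality pass to the limit from Lemma~\ref{lem:rhonbasics}, and the identity $\varrho(x,x)=0$ was already noted in the proof of Lemma~\ref{lem:rhonlimexis}. Positive-definiteness is immediate from Lemma~\ref{lem:rho_w_m}: if $x\ne y$ and $m=m(x,y)$, then $\varrho(x,y)\gtrsim w(X^m)>0$. Next I would show that $\id_{\qT}\colon(\qT,d)\to(\qT,\varrho)$ is a homeomorphism: if $d(x_n,x)\to 0$, then $m(x_n,x)\to\infty$, so Lemma~\ref{lem:rho_w_m} combined with $w(X^k)\le 3^{-k}$ gives $\varrho(x_n,x)\to 0$, proving continuity of the identity from $d$ to $\varrho$; since $(\qT,d)$ is compact and $(\qT,\varrho)$ is Hausdorff, this upgrades to a homeomorphism, so $(\qT,\varrho)$ is compact and, in particular, complete.

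The heart of the proof is the midpoint construction. Fix distinct $x,y\in\qT$. For each $n\in\N$, write the simple $n$-chain $P^n_{xy}$ as $X_1,\dots,X_r$ with intermediate $n$-vertices $v_0=x,\,v_1,\dots,v_{r-1},\,v_r=y$ where $\{v_i\}=X_i\cap X_{i+1}$, and set $\mu_k\coloneqq\sum_{i=1}^k w(X_i)$ and $\ell_n\coloneqq\mu_r=\varrho_n(x,y)$. Since each $w(X_i)\le 3^{-n}$ while $\ell_n\to\varrho(x,y)>0$, for $n$ large enough there exists $k_n\in\{1,\dots,r-1\}$ with $\mu_{k_n}\le\ell_n/2<\mu_{k_n+1}$. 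Setting $z_n\coloneqq v_{k_n}$, a short check shows that $X_1,\dots,X_{k_n}$ and $X_{k_n+1},\dots,X_r$ are precisely the simple $n$-chains joining $x$ to $z_n$ and $z_n$ to $y$ respectively. Hence $\varrho_n(x,z_n)=\mu_{k_n}\le\ell_n/2$ and $\varrho_n(z_n,y)=\ell_n-\mu_{k_n}\le\ell_n/2+w(X_{k_n+1})\le\ell_n/2+3^{-n}$. The bound $\varrho\le\varrho_n+3^{-n}$ obtained from Lemma~\ref{lem:simpknchanest} (exactly as used in the proof of Lemma~\ref{lem:rhonlimexis}) then yields $\varrho(x,z_n),\varrho(z_n,y)\le\ell_n/2+O(3^{-n})$.

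Using compactness of $(\qT,d)$, I would pass to a subsequence with $z_n\to z_*$ in $d$; by the homeomorphism already established, $\varrho(z_n,z_*)\to 0$ as well. Then $\varrho(x,z_*)\le\varrho(x,z_n)+\varrho(z_n,z_*)$ and the right-hand side tends to $\varrho(x,y)/2$; similarly $\varrho(z_*,y)\le\varrho(x,y)/2$. The triangle inequality forces equality in both cases, so $z_*$ is an exact midpoint of $x$ and $y$. Since $(\qT,\varrho)$ is compact, hence complete, and admits a midpoint for every pair, the standard iterated-midpoint construction (build a geodesic along a dense dyadic set and extend by completeness) produces a geodesic between any two points, establishing that $\varrho$ is a geodesic metric. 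I expect the main technical obstacle to be the combinatorial bookkeeping in identifying the initial and terminal subchains of $P^n_{xy}$ as genuine simple $n$-chains joining $x,z_n$ and $z_n,y$; once this is settled, the estimates and the limiting argument are routine.
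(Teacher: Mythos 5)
Your proof is correct, but it takes a somewhat more roundabout route than the paper. The combinatorial heart is the same in both: split the simple $n$-chain $P^n_{xy}$ at an intermediate vertex and observe that the two halves are themselves simple $n$-chains (your ``short check'' is precisely this observation). The paper exploits it to prove the \emph{exact additivity} $\varrho(x,z)+\varrho(z,y)=\varrho(x,y)$ for \emph{every} $z\in(x,y)$, by splitting at whichever tile first contains $z$ and letting $n\to\infty$; this immediately shows that each arc $[x,y]$ has $\varrho$-length $\varrho(x,y)$, i.e.\ is a geodesic, with no need for compactness or a midpoint lemma. You instead split at a vertex $z_n$ that \emph{approximately bisects} the $w$-length, pass to a subsequential limit $z_*$ to get an exact midpoint, and invoke the abstract criterion that a complete metric space with midpoints is geodesic. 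This works, but it costs you two extra steps the paper avoids at this stage: you must first establish that $\id_{\qT}\colon(\qT,d)\to(\qT,\varrho)$ is a homeomorphism (which the paper defers to the quasisymmetry section) in order to get compactness, hence completeness, of $(\qT,\varrho)$, and you must appeal to the iterated-midpoint construction. The paper's argument buys the stronger and more explicit conclusion that the tree arcs $[x,y]$ themselves are the geodesics; your limit points $z_*$ do in fact lie on $[x,y]$ (since each $z_n$ does and $[x,y]$ is closed), so the two routes produce the same geodesics, but the paper makes this transparent while your construction only gives it as a by-product. One minor point you gloss over: your index $k_n$ exists only once $n$ exceeds $m(x,y)$, so that the chain has length $r\ge 3$; you do say ``for $n$ large enough'', which covers it, but it is worth noting that the lower bound $r\ge 3$ is what guarantees $1\le k_n\le r-1$.
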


\begin{proof}
  Lemma~\ref{lem:rho_w_m} immediately implies $\varrho(x,y) >0$ for
  distinct $x,y\in \qT$. This was the last remaining property of
  a metric we needed to verify for $\varrho$; see the discussion after
  \eqref{eq:def_rho}. Thus $\varrho$ is a metric on $\qT$.

  In order to show that $\varrho$ is a geodesic metric, we will 
  establish the following fact.

  \smallskip
  {\em Claim.} $\varrho(x,y) = \varrho(x,z) + \varrho(z,y)$,
  whenever $x,y\in \qT$, $x\ne y$, and $z\in (x,y)$. 
  
  \smallskip
  To see this, fix $n\in \N$ and let $P=P^n_{xy}$ be the simple
  $n$-chain joining $x$ and $y$ given by the $n$-tiles
  $X_1,\dots,X_r$, where $r\in \N$.  We know that these tiles cover $[x,y]$ (see the proof of Lemma~\ref{lem:ex_chain_unique}~\ref{item:chain1}), and so
  there exists a smallest number $1\le s\le r$ such that $z\in
  X_s$. Then $X_1, \dots, X_s$ is the simple $n$-chain
  $Q\coloneqq P^n_{xz}$ joining $x$ and $z$. 
  
  If $z\in X_{s+1}$ (which necessitates $r\ge s+1$), then
  $X_{s+1}, \dots , X_r$ is the simple $n$-chain
  $Q'\coloneqq P^n_{zy}$ joining $z$ and $y$. Otherwise, if
  $z\not \in X_{s+1}$, this $n$-chain $Q'$ is given by
  $X_{s}, \dots , X_r$. It now follows from \eqref{eq:defrhon2}
  that
   \begin{align*} 
     \varrho_n(x,z) + \varrho_n(z,y)
     &=
       \length_w(Q)+\length_w(Q')
     \\
     & \le
       \sum_{i=1}^sw(X_i)+    \sum_{i=s}^rw(X_i)=
       \length_w(P)+w(X_s)
     \\ 
     &\le \varrho_n(x,y)+3^{-n}. 
   \end{align*}
   Letting $n\to \infty$, we conclude that
   $\varrho(x,z) + \varrho(z,y)\le \varrho (x,y)$. Since the
   opposite inequality is true by the triangle inequality, the
   Claim follows.

   \smallskip
   Repeated application of the Claim implies that for all
   $u,v\in \qT$ the length of the arc $[u,v]$ is equal to
   $\varrho(u,v)$; in other words, $[u,v]$ is a geodesic segment
   (with respect to the metric $\varrho$) joining $u$ and
   $v$. Hence $\varrho$ is a geodesic metric.
\end{proof}

We  summarize the  properties of tiles with respect to the metric
$\varrho$. Recall that $\X^n$ denotes  the set of all $n$-tiles. 

\begin{proposition}
  \label{prop:mett} For all  $n,k\in \N_0$  the following statements are true:
  
   \begin{enumerate}
    \item 
    \label{item:mett1}   $\diam_\varrho(X)\asymp w(X)$ for all 
    $X\in \X^n$. 
     
       \item 
    \label{item:mett2}   
       $\diam_\varrho(X)\asymp \diam_\varrho(Y)$ if   $|n-k|\le 1$, $X\in \X^n$, 
       $Y\in \X^k$, 
       and $X\cap Y\ne \emptyset$.

       \item 
    \label{item:mett3}   
       $\diam_\varrho(Y)\lesssim 3^{-k} \diam_\varrho(X)$ 
         if  $X\in \X^n$, $Y\in \X^{n+k}$,  and $X\cap Y\ne \emptyset$.  

        \item
    \label{item:mett4} $\varrho(x,y)\asymp \diam_\varrho(X^m)$ for all distinct $x,y\in \qT$, where $m=m(x,y)$ and $X^m$ is 
    any $m$-tile with $x\in X^m$. 
        \end{enumerate}
        Here the implicit constants are independent of the tiles and their levels in \ref{item:mett1}--\ref{item:mett3}, and independent of 
        $x$, $y$, $X^m$  in  \ref{item:mett4}.  
\end{proposition}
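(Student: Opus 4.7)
My plan is to prove \ref{item:mett1} in detail and then deduce \ref{item:mett2}--\ref{item:mett4} from it with the help of Lemma~\ref{lem:rho_w_m}, Lemma~\ref{lem:w_comp}, and \eqref{eq:wXXprime}. The engine behind both halves of \ref{item:mett1} is a simple fact about the shape of tiles that is useful in its own right: for any two distinct points $x,y$ in an $n$-tile $X$, the open arc $(x,y)$ is disjoint from $\V^n$. Indeed, if $v\in (x,y)\cap \V^n$, then $v\in \partial X$, and Lemma~\ref{lem:vt}~\ref{item:vt2} places $X$, hence both $x$ and $y$, in the closure of a single component of $\qT\setminus \{v\}$; but $v\in (x,y)$ forces $x$ and $y$ into distinct components of $\qT\setminus\{v\}$, a contradiction. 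Consequently the unique simple $n$-chain $P^n_{xy}$ consists of the single tile $X$, so $\varrho_n(x,y)=w(X)$.

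For the upper bound in \ref{item:mett1}, Lemma~\ref{lem:simpknchanest} applied to this trivial chain yields $\varrho_k(x,y)\le w(X)+\tfrac{1}{2}w(X)+\tfrac{1}{2}w(X)=2w(X)$ for every $k\ge n$; passing to the limit gives $\varrho(x,y)\le 2w(X)$ and therefore $\diam_\varrho(X)\le 2w(X)$. For the lower bound I will exploit Lemma~\ref{lem:delta_small_tiles_child}~\ref{item:X3child}, which says $X$ contains at least three $(n+1)$-subtiles $X'_1,\ldots, X'_s$. Because these subtiles have single-point intersections (Lemma~\ref{lem:vt}~\ref{item:vt4}) and together cover the subtree $X$ (Lemma~\ref{lem:vt}~\ref{item:vt8}), their adjacency graph is a combinatorial tree on $s\ge 3$ vertices, whose diameter is at least $2$; hence two of the subtiles, say $X'_i$ and $X'_j$, are disjoint in $\qT$. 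Choose $x\in \inte(X'_i)$ and $y\in \inte(X'_j)$. Since $\V^n\subset \V^{n+1}$, both interiors lie in $\inte(X)$, and $X'_i,X'_j$ are the only $(n+1)$-tiles containing $x$ and $y$, respectively. Hence $m(x,y)=n$ with $X$ serving as an $m$-tile containing $x$, and Lemma~\ref{lem:rho_w_m} gives $\varrho(x,y)\asymp w(X)$, so $\diam_\varrho(X)\gtrsim w(X)$.

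Granting \ref{item:mett1}, the other items fall out quickly. Statement \ref{item:mett2} is \ref{item:mett1} combined with Lemma~\ref{lem:w_comp}. For \ref{item:mett3}, if $Y\subset X$ then iterating \eqref{eq:wXXprime} along the tile chain $Y\subset Y^{n+k-1}\subset\cdots\subset Y^n=X$ gives $w(Y)\le 3^{-k}w(X)$, and \ref{item:mett1} translates this into the claimed bound on $\varrho$-diameters; if $Y\not\subset X$, then $Y$ lies in some other $n$-tile $X'$ that meets $X$, and Lemma~\ref{lem:w_comp} reduces to the previous case up to a multiplicative constant. Finally, \ref{item:mett4} follows by pairing Lemma~\ref{lem:rho_w_m} with \ref{item:mett1}. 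The main obstacle I expect is the lower bound of \ref{item:mett1}: producing a pair of points $x,y\in X$ with $m(x,y)=n$ \emph{exactly} is the step that forces one to use both the combinatorial structure of the $(n+1)$-tile decomposition of $X$ and the containment $\V^n\subset \V^{n+1}$, and it is the only place where the proof is not a mechanical assembly of earlier lemmas.
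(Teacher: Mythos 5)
Your proof is correct, and for the upper bound of \ref{item:mett1} and for items \ref{item:mett2}--\ref{item:mett4} it matches the paper's argument essentially line by line (same use of Lemma~\ref{lem:simpknchanest}, Lemma~\ref{lem:w_comp}, \eqref{eq:wXXprime}, and Lemma~\ref{lem:rho_w_m}). Where you diverge is the lower bound in \ref{item:mett1}. The paper argues there directly from Lemma~\ref{rho_n_X}: for an arc-tile one gets $\varrho(p,q)=w(X)$ exactly from the main vertices, and for a leaf-tile or the $0$-tile one descends one level to a child that is an arc-tile and picks up a factor of $\epsilon_0$, yielding the explicit bounds $\epsilon_0^2 w(X)\le\diam_\varrho(X)\le 2w(X)$. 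You instead locate two \emph{disjoint} $(n+1)$-subtiles of $X$ by observing that the adjacency graph of $X$'s children is a combinatorial tree on at least three vertices, pick interior points $x,y$ in those two subtiles, verify $m(x,y)=n$, and invoke Lemma~\ref{lem:rho_w_m}. Both routes are sound. Your version is arguably more conceptual in that it routes everything through the single estimate $\varrho(x,y)\asymp w(X^{m(x,y)})$, whereas the paper's is more direct and produces explicit constants. The one place you should tighten is the assertion that the adjacency graph of $(n+1)$-tiles inside $X$ is acyclic: this is true (a cycle $Y_1,\dots,Y_k,Y_1$ with single-point intersections $v_i\in Y_i\cap Y_{i+1}$ would, by Lemma~\ref{lem:vt}~\ref{item:vt4} and~\ref{item:vt5}, produce a Jordan curve $\bigcup_i[v_i,v_{i+1}]$ inside $\qT$, which is impossible), but it is not proved in the paper and should be spelled out if you take this route.
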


\begin{proof}  \ref{item:mett1}
  We will actually show that 
  \begin{equation}
    \label{eq:Xrhisdiam0} 
    \epsilon_0^2 w(X) \leq \diam_\varrho(X)  \leq 2w(X). 
  \end{equation}

  If $x,y\in X$ with $x\ne y$, then the tile $X$ constitutes
  $P^n_{xy}$, the
  simple $n$-chain joining $x$ and $y$. So it follows from
  Lemma~\ref{lem:simpknchanest} that for $k\ge n$ we have
  \begin{equation*}
    \varrho_k(x,y)
    =
    \length_w(P^k_{xy})
    \le
    \length_w(P^n_{xy}) +w(X)
    =
    2w(X). 
  \end{equation*}
  Letting $k\to \infty$, we see that $\varrho(x,y)\le 2w(x)$, and
  so $\diam_\varrho(X) \leq 2w(X)$ as desired.

  If $X$ is an arc-tile, then it follows from
  Lemma~\ref{rho_n_X}~\ref{item:rho_Xn2} that
  $\varrho(p, q)=w(X)$ for the two main vertices $p$ and $q$
  of $X$. Hence $\diam_\varrho(X)\ge w(X)$.

  Suppose $X$ is a leaf-tile. Then $n\ge 1$ and $\partial X$ is a
  singleton set consisting of one $n$-vertex $u$. The unique
  $(n+1)$-tile $X'\sub X$ with $u\in X'$ is an arc-tile.  By
  what we have seen, it follows that
  \begin{equation*}
    \diam_\varrho(X)
    \ge
    \diam_\varrho(X')\ge w(X')
    =\epsilon_0 w(X).  
  \end{equation*}
  Finally, if $n=0$ and $X=X^0=\qT$, then $X$ contains a $1$-tile $X'$. Then $X'$ is an arc- or a leaf-tile, and  from  what we have seen, we conclude that
  \begin{equation*}
    \diam_\varrho(X)
    \ge
    \diam_\varrho(X')
    \ge
    \epsilon_0 w(X')
    =
    \eps_0^2 w(X). 
  \end{equation*}
  The statement follows. 
  
  \smallskip   \ref{item:mett2} This follows from  \ref{item:mett1}
  and Lemma~\ref{lem:w_comp}. 
  
   \smallskip   \ref{item:mett3} In the given setup, there is an $n$-tile $Y'$ with 
   $Y\sub Y'$. Then $Y'\cap X\ne \emptyset$. So   \ref{item:mett1} and  \ref{item:mett2} imply that 
   $$ \diam_\varrho (Y)\asymp w(Y)\le 3^{-k} w(Y')
    \asymp 3^{-k} \diam_\varrho (Y') \asymp  3^{-k} \diam_\varrho (X), $$ as desired.

    \smallskip   \ref{item:mett4} This follows from   
    \ref{item:mett1} and  Lemma~\ref{lem:rho_w_m}
\end{proof}

\section{Quasisymmetry}
\label{sec:quasisymmetry}

In this section we complete the proof of Theorem~\ref{main} by
showing that the original metric $d$ on $\qT$ is
quasisymmetrically equivalent to the geodesic metric $\varrho$
constructed above. For this we require the following fact.

\begin{lemma}
  \label{lem:T_rho_doubling}
  The metric space $(\qT,\varrho)$ is doubling.
\end{lemma}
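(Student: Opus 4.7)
The plan is to establish the doubling property directly from the tile structure. Let me fix a point $a\in\qT$ and a radius $0<r\le\diam_\varrho(\qT)$; the goal is to cover $B_\varrho(a,r)$ by a uniformly bounded number of $\varrho$-balls of radius $r/2$. First I will choose a level at which tiles have $\varrho$-diameter comparable to $r$. By Proposition~\ref{prop:mett}\ref{item:mett1} and the inequality \eqref{eq:wXXprime}, along any descending chain $\qT = X^0 \supset X^1 \supset \dots$ of tiles containing $a$ the $\varrho$-diameters decrease monotonically with consecutive ratios trapped in an interval $[c_1,c_2]\subset(0,1)$ depending only on $\epsilon_0$ and the constants in Proposition~\ref{prop:mett}\ref{item:mett1}. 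Hence there is a level $n$ and an $n$-tile $X_0$ containing $a$ with $\diam_\varrho(X_0)\asymp r$.

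The central step is to localize $B_\varrho(a,r)$ in terms of tiles. For any $y\in B_\varrho(a,r)$ with $y\ne a$, set $m\coloneqq m(a,y)$. Proposition~\ref{prop:mett}\ref{item:mett4} gives $\varrho(a,y)\asymp\diam_\varrho(X^m)$ for any $m$-tile $X^m$ containing $a$, while \eqref{eq:wXXprime} together with Proposition~\ref{prop:mett}\ref{item:mett1} and Lemma~\ref{lem:w_comp} imply that every $m$-tile containing $a$ has $\varrho$-diameter $\gtrsim 3^{n-m}\diam_\varrho(X_0)\asymp 3^{n-m}r$ whenever $m\le n$. Since $\varrho(a,y)<r$, this forces $m\ge n-C_0$ for a constant $C_0$ depending only on the implicit constants (when $m>n$ the conclusion is automatic). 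Setting $M\coloneqq\max\{n-C_0,\,0\}$, the definition of $m$ produces $m$-tiles $A\ni a$ and $B\ni y$ with $A\cap B\ne\emptyset$; their level-$M$ ancestors $A^M\supset A$ and $B^M\supset B$ still satisfy $A^M\cap B^M\ne\emptyset$. Thus $y$ lies in an $M$-tile meeting one of the at most two $M$-tiles containing $a$ (Lemma~\ref{lem:vt}\ref{item:vt5}), and by Lemma~\ref{lem:nX_child_neigh}\ref{item:nX_neigh} there are at most $2K$ such $M$-tiles; each has $\varrho$-diameter $\asymp r$ by Proposition~\ref{prop:mett}\ref{item:mett2} together with the bounded shift in level.

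To finish, I will subdivide each of the at most $2K$ relevant $M$-tiles into descendants at level $M+k_0$, where $k_0$ is chosen large enough (depending only on the implicit constants from Proposition~\ref{prop:mett}) so that Proposition~\ref{prop:mett}\ref{item:mett3} guarantees every such descendant has $\varrho$-diameter at most $r/2$. By iterating Lemma~\ref{lem:nX_child_neigh}\ref{item:nX_child}, each $M$-tile has at most $K^{k_0}$ descendants at level $M+k_0$, so $B_\varrho(a,r)$ is covered by at most $2K^{k_0+1}$ sub-tiles, each contained in a $\varrho$-ball of radius $r/2$ centered at any of its points. This exhibits a uniform doubling constant for $(\qT,\varrho)$.

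The main obstacle is the localization in the second paragraph: the metric inequality $\varrho(a,y)<r$ must be converted into a combinatorial constraint on $m(a,y)$, which requires combining the sharp two-sided estimate in Proposition~\ref{prop:mett}\ref{item:mett4} with the uniform bound on weight ratios across a single level step, and one must note that tile intersections are preserved under passage to ancestors. Once these points are settled, the remaining estimates are routine consequences of the local-combinatorial bounds $K$ and the weight bounds recorded in Section~\ref{sec:weights-vertices}.
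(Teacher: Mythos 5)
Your proof follows essentially the same strategy as the paper's: find a level at which tiles near $a$ have $\varrho$-diameter comparable to $r$, show the ball is contained in a bounded number of tiles at that level, and subdivide a fixed number of additional levels to get pieces of diameter at most $r/2$. The only cosmetic difference is how you pin down the level: the paper defines $n$ directly as the largest index with $\mybar{B}_\varrho(x,s)\subset U^n(x)$ (where $U^n(x)$ is the union of $n$-tiles meeting an $n$-tile containing $x$), which immediately localizes the ball; you instead fix a level $n$ with $\diam_\varrho(X_0)\asymp r$ and then deduce the localization $B_\varrho(a,r)\subset U^M(a)$ with $M=\max\{n-C_0,0\}$. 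Both are valid, and the tile-counting and subdivision steps are identical.

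One genuine (though repairable) inaccuracy: you assert that along a nested chain $\qT=X^0\supset X^1\supset\dots$ of tiles containing $a$ the $\varrho$-diameters decrease monotonically with consecutive ratios trapped in a fixed $[c_1,c_2]\subset(0,1)$. This is false as stated. By \eqref{eq:wXXprime} the \emph{weights} satisfy $\epsilon_0\le w(X^{k+1})/w(X^k)\le 1/3$, but the $\varrho$-diameter is only \emph{comparable} to the weight (with two-sided constant $C=C(\epsilon_0)$, see \eqref{eq:Xrhisdiam0}), so the ratio $\diam_\varrho(X^{k+1})/\diam_\varrho(X^k)$ lies in an interval of the form $[\epsilon_0/C^2,\,C^2/3]$, which can include values above $1$. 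Thus the $\varrho$-diameters need not decrease at each step. What you actually need — and do have — is that the ratios are bounded below by a positive constant and that $\diam_\varrho(X^k)\le 2\cdot 3^{-k}\to 0$; from this, taking the largest $n$ with $\diam_\varrho(X^n)\ge r$, the lower bound on the one-step ratio forces $\diam_\varrho(X^n)\lesssim r$, so $\diam_\varrho(X^n)\asymp r$. After replacing the monotonicity claim by this argument, the rest of your proof goes through unchanged.
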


\begin{proof} 
  Let $x\in \qT$ and $s>0$ be arbitrary. It suffices to show that
  the closed ball $\mybar{B}_\varrho (x,s)$ can be covered with
  a controlled number of sets of $\varrho$-diameter $<s/4$.
    
  To see this,  for $k\in \N_0$   we define 
  \begin{align*}
    U^k(x) 
    =
    \{y\in \qT: {}&\text{there exist} \text{ $k$-tiles $X$ and $Y$ }
    \\  
    &\text{with  $x\in X$, $y\in Y$, and $X\cap Y\ne \emptyset\}$}.
  \end{align*} 
  In other words,  $U^k(x)$ is the union of all $k$-tiles that meet a 
  $k$-tile containing $x$. Note that
  \begin{equation} \label{eq:Bkmxy}
     U^k(x)\setminus \{x\} = \{y\in \qT\setminus\{x\} : m(x,y) \geq k\}, 
  \end{equation}
  where  $m(x,y)$ is defined as in \eqref{eq:defm}. Indeed, if $y\in 
  \qT\setminus\{x\}$ and $m=m(x,y)\geq k$, then  
  there are non-disjoint $m$-tiles $X^m$ and $Y^m$ with
  $x\in X^m$ and $y\in Y^m$. Then the unique  $k$-tiles $X^k$ and $Y^k$ with $X^k\supset X^m$
  and $Y^k\supset Y^m$ are non-disjoint and contain $x$ and $y$
  respectively. So $y\in U^k(x)\setminus \{x\}$. Conversely, if $y\in U^k(x)\setminus \{x\}$, then $m(x,y)\ge k$ as  follows from the definitions of $U^k(x)$ and $m(x,y)$. 

  We have $U^0(x)=\qT\supset \mybar{B}_\varrho (x,s)$. Moreover, Proposition~\ref{prop:mett}~\ref{item:mett1} implies   that $\diam_\varrho (U^k(x))\to 0$ as $k\to \infty$. 
   Thus there exists a largest number
  $n\in \N_0$ with $\mybar{B}_\varrho (x,s)\sub U^n(x)$.

  By definition of $n$ we know that $\mybar{B}_\varrho (x,s) \not\subset
  U^{n+1}(x)$. This means that there is a point $y\in
 \mybar{B}_\varrho (x,s)  \setminus 
  U^{n+1}(x) \subset U^n(x)\setminus U^{n+1}(x)$. Then
  $\varrho(x,y) \leq s$,  and $m(x,y) = n$ as follows from \eqref{eq:Bkmxy}. 
  
  Let $k\in \N_0$ and $Y^{n+k}$ be an arbitrary  $(n+k)$-tile contained in an $n$-tile $Y^n\sub U^n(x)$. Then there exists an 
  $n$-tile $X^n$ with $x\in X^n$ and $X^n\cap Y^n\ne \emptyset$.
 Then it follows from Proposition~\ref{prop:mett}~\ref{item:mett2}--\ref{item:mett4}  that  
  \begin{align*}  
  \diam_\varrho(Y^{n+k})& \lesssim 3^{-k}   \diam_\varrho (Y^n) \\
  & \asymp 3^{-k}  \diam_\varrho (X^n)  \asymp 3^{-k} \varrho (x,y) \le 3^{-k} s. 
  \end{align*} 
 This estimate implies that we can find $k_0\in \N_0$ independent of $x$ and $s$ such that $\diam_\varrho(Y^{n+k_0})<s/4$ for all  
 $(n+k_0)$-tiles $Y^{n+k_0}$  contained in any $n$-tile $Y^n\sub U^n(x)$.

  The point $x$ is contained in at most two $n$-tiles, each of
  which intersects at most $K$ $n$-tiles, where $K$ is the
  constant from Lemma~\ref{lem:nX_child_neigh}. Thus $U^n(x)$ is a union
  of at most $2(K +1)$ $n$-tiles. Each of these $n$-tiles contains at most
  $K^{k_0}$ $(n+k_0)$-tiles, and all of these $(n+k_0)$-tiles have  
  $\varrho$-diameter $<s/4$. 

Hence $\mybar{B}_\varrho(x,s)\sub U^n(x)$ can be  covered by at most
  $N'\coloneqq  2(K+1)K^{k_0}$ sets of $\varrho$-diameter $<s/4$. Since  $N'$ is
  independent of $x$ and $s$, we conclude that the space
  $(\qT,\varrho)$ is doubling.
\end{proof}

\begin{lemma}
  \label{lem:id_qs}
  The identity map $\id_{\qT}\colon (\qT,d) \to (\qT,\varrho)$ is a
  quasisymmetry. 
\end{lemma}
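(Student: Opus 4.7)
The plan is to turn the estimates of Lemma~\ref{lem:rho_w_m} into a power-type distortion function. Given three distinct points $x,y,z\in \qT$, I will set $m\coloneqq m(x,y)$ and $k\coloneqq m(x,z)$, so that $d(x,y)\asymp \delta^m$ and $d(x,z)\asymp \delta^k$. To also compare $\varrho(x,y)$ with $\varrho(x,z)$, I will fix a \emph{nested} pair of tiles containing $x$: choose an $m$-tile $X^m$ and a $k$-tile $X^k$, both containing $x$, such that $X^m\subset X^k$ when $m\ge k$, or $X^k\subset X^m$ when $k\ge m$. Such a choice is available by Lemma~\ref{lem:vt}~\ref{item:vt7} (each tile is contained in a unique tile of the preceding level, so one may take the nested chain of ancestors of an arbitrary tile containing $x$). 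Lemma~\ref{lem:rho_w_m} then yields $\varrho(x,y)\asymp w(X^m)$ and $\varrho(x,z)\asymp w(X^k)$.

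Once the tiles are nested, iterated application of the two-sided weight bound \eqref{eq:wXXprime}, namely $\epsilon_0 w(X)\le w(X')\le \tfrac{1}{3} w(X)$ whenever $X'\subset X$ at consecutive levels, yields
\begin{equation*}
\epsilon_0^{|m-k|}\le \frac{w(X^{\max(m,k)})}{w(X^{\min(m,k)})}\le 3^{-|m-k|}.
\end{equation*}
Set $t\coloneqq d(x,y)/d(x,z)$, so that $\delta^{m-k}\asymp t$. A short logarithmic computation then converts $|m-k|$ into a power of $t$: with $\alpha\coloneqq \log 3/\log(1/\delta)>0$ and $\beta\coloneqq \log \epsilon_0/\log \delta>0$, I obtain
\begin{equation*}
\frac{\varrho(x,y)}{\varrho(x,z)}\asymp \frac{w(X^m)}{w(X^k)}\lesssim
\begin{cases} t^{\alpha} & \text{if } m\ge k,\\ t^{\beta} & \text{if } k\ge m.\end{cases}
\end{equation*}
The additive errors in $m-k=\log t/\log \delta+O(1)$ caused by the implicit constants in $d(x,y)\asymp \delta^m$ and $d(x,z)\asymp \delta^k$ are simply absorbed into the multiplicative constants.

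Accordingly, the function $\eta(t)\coloneqq C(t^{\alpha}+t^{\beta})$, with a sufficiently large constant $C$, controls the distortion of $\id_{\qT}$, and it is a homeomorphism of $[0,\infty)$ onto itself because $\alpha,\beta>0$. I do not anticipate a serious obstacle: once the nested tiles have been selected and the weight bound \eqref{eq:wXXprime} has been applied iteratively, the argument is purely bookkeeping. The only mildly delicate point is to insist on tiles in a \emph{nested} chain — rather than arbitrary tiles containing $x$, which Lemma~\ref{lem:rho_w_m} would already permit — since nestedness is precisely what unlocks the telescoping weight estimate in \eqref{eq:wXXprime}.
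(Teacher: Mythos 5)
Your proposal is correct, and it takes a genuinely different route from the paper. The paper first proves separately that $(\qT,\varrho)$ is doubling (Lemma~\ref{lem:T_rho_doubling}) and then invokes the criterion that a weak quasisymmetry from a connected doubling space to a doubling space is a full quasisymmetry (\cite[Theorem~10.19]{He}); this reduces the task to verifying a single one-constant implication $d(x,y)\le d(x,z)\Rightarrow\varrho(x,y)\le H\varrho(x,z)$, which the paper then checks via Proposition~\ref{prop:mett}. You instead build an explicit control function $\eta(t)=C(t^{\alpha}+t^{\beta})$ directly from the two-sided weight bound \eqref{eq:wXXprime} applied along a nested chain of tiles, which makes both the doubling lemma and the Heinonen criterion superfluous for this lemma. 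The trade-off: the paper's route is more modular (and the doubling of $(\qT,\varrho)$ is of independent interest), while yours gives sharper quantitative information, showing that $\id_{\qT}$ is a power quasisymmetry with explicit exponents $\alpha=\log 3/\log(1/\delta)$ and $\beta=\log\epsilon_0/\log\delta$. One small point you should make explicit, as the paper does at the start of its own proof: the definition of quasisymmetry presupposes that $\id_{\qT}\colon(\qT,d)\to(\qT,\varrho)$ is a homeomorphism, so you should note that topological equivalence of $d$ and $\varrho$ follows from Lemma~\ref{lem:rho_w_m}, since for a sequence $x_n\ne x$ the conditions $d(x,x_n)\to 0$, $m(x,x_n)\to\infty$, and $\varrho(x,x_n)\to 0$ are all equivalent.
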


\begin{proof} Let $x\in \qT$ and suppose that 
$\{x_n\}_{n\in \N}$ is a sequence of points with $x\ne x_n$  for $n\in \N$. Then Lemma~\ref{lem:rho_w_m} implies that, as $n\to \infty$, we have 
$d(x,x_n)\to 0$   if and only if $m(x,x_n)\to \infty$ if and only if 
$\varrho (x,x_n)\to 0$. This shows that  the metrics $d$ and $\varrho$ are topologically equivalent, and so   the map 
$\id_{\qT}\colon (\qT,d) \to (\qT,\varrho)$ is a homeomorphism.

The space $(\qT,d)$ is doubling and connected by assumption, and
$(\qT,\varrho)$ is doubling by Lemma~\ref{lem:T_rho_doubling}. So
in order to prove that
$\id_{\qT}\colon (\qT,d) \to (\qT,\varrho)$ is a quasisymmetry,
it is enough to show that it is a \emph{weak quasisymmetry} (see
\cite[Theorem~10.19]{He}). This means that we have to find a
constant $H\geq 1$ such that we have the
implication 
\begin{equation*} 
  d(x,y) 
  \leq d(x,z) 
  \Rightarrow
  \varrho(x,y) 
  \leq H 
\varrho(x,z)
\end{equation*}
for all $x,y,z\in \qT$. 

Let $x,y,z\in \qT$ with $d(x,y) \leq d(x,z)$ be arbitrary. We may
assume that the points $x,y,z$ are pairwise distinct. Let
$m\coloneqq m(x,y)$ and $n \coloneqq m(x,z)$ be defined as in 
 \eqref{eq:defm}. 
 
By the first part of Lemma~\ref{lem:rho_w_m} we have
$\delta^{m}\asymp d(x,y)\le d(x,z) \asymp \delta^{n}$. Thus there
is a constant $k_0\in\N_0$ independent of $x,y,z$ such that
\begin{equation*}
  n \le m +k_0.
\end{equation*}
 For $i=m, \dots, m+k_0$ we   choose an $i$-tile $Y^i$
 that contains $x$.  By  applying  Proposition~\ref{prop:mett}~\ref{item:mett2}
 at most $k_0$ times (and so a number of times
  independent of $x,y,z$),
 we see that 
$$ \diam_\varrho(Y^{m+k_0})\asymp  \diam_\varrho(Y^{m}).$$  
We also choose an $n$-tile $Z^n$ that contains $x$. Since $m+k_0\ge n$, and $x\in Y^{m+k_0}\cap Z^n$, 
Proposition~\ref{prop:mett}~\ref{item:mett3} implies that 
$$ \diam_\varrho(Y^{m+k_0}) \lesssim   \diam_\varrho(Z^n). $$  
So with Proposition~\ref{prop:mett}~\ref{item:mett4} we arrive at 
  \begin{equation*}
    \varrho(x,y) \asymp \diam_\varrho(Y^m) \asymp  \diam_\varrho (Y^{m+k_0}) \lesssim   \diam_\varrho(Z^n)
 \asymp \varrho(x,z).
  \end{equation*}
 Since all implicit constants in the previous 
  estimates are independent of $x,y,z$, the statement
  follows. 
\end{proof}
The proof of Theorem~\ref{main} is now complete.

\section{Lowering the Hausdorff dimension}
\label{sec:lower-hausd-dimens}

In this section we will prove Theorem~\ref{thm:qtree_confdim1}.
We assume that $\qT$ is the given qc-tree as before. Let $\alpha>1$ be arbitrary. We claim that
$\dim_H(\qT,\varrho)\le\alpha$ for the Hausdorff dimension of
$(\qT,\varrho)$ if we choose the parameter $\epsilon_0>0$ in \eqref{eq:para} that was  used in
the construction of $\varrho$ as described in the previous
sections small enough.  Then Theorem~\ref{thm:qtree_confdim1} immediately follows, because $T\coloneqq (\qT,\varrho)$ is a geodesic tree that is quasisymmetrically equivalent to $\qT$ and we have  $\dim_H T\le\alpha$.
 
As before, let $K$ be the constant from
Lemma~\ref{lem:nX_child_neigh}. Then we can
choose $\eps_0>0$ so small (in addition to our previous
requirement \eqref{eq:para})  that
 $$L \coloneqq (1/3)^{\alpha-1} 
+ K \epsilon_0^\alpha <1.$$
We claim that with these choices we have  
$\mathcal{H}^\alpha(\qT,\varrho)=0$ for the $\alpha$-Hausdorff measure of $(\qT,\varrho)$ (we will recall the relevant definitions below). This in turn implies the desired inequality $\dim_H(\qT,\varrho)\le\alpha$. 

To see that $\mathcal{H}^\alpha(\qT,\varrho)=0$, we first consider an 
 $n$-tile $X$, where $n\in \N_0$.  
 In the following estimates,  $X'$  denotes an arbitrary 
 $(n+1)$-tile with  $X'\subset
X$ and we denote by $\lambda(X')\coloneqq w(X')/w(X)$ the {\em relative weight} of $X'$. Note that $\eps_0\le \lambda (X')\le 1/3$
(see \eqref{eq:wXXprime}). 

Suppose first that $X$ is an arc-tile. Let $p$ and $q$ be the  main vertices of $X$. Then we have
$$ \sum_{X'\cap [p,q] \neq \emptyset}
   \lambda(X')=1, $$
   as follows from \eqref{eq:lPp_wX}.
   This shows that 
\begin{align*}
  \sum_{X'} w(X')^\alpha 
  &= 
  w(X)^\alpha \sum_{X'} \lambda(X')^\alpha 
  \\
  &= 
  w(X)^\alpha \bigg(\sum_{X'\cap [p,q] \neq \emptyset} \lambda(X')^\alpha
  + \sum_{X'\cap [p,q] = \emptyset} \lambda(X')^\alpha\bigg)
  \\
  &=
  w(X)^\alpha \bigg(\sum_{X'\cap [p,q] \neq \emptyset}
    \lambda(X')^{\alpha-1}\lambda(X')
  + 
  \sum_{X'\cap [p,q] = \emptyset} \epsilon_0^\alpha\bigg)
  \\
  &\leq
  w(X)^\alpha\big((1/3)^{\alpha-1} 
+ K \epsilon_0^\alpha\big)=L w(X)^\alpha.    
\end{align*}

For a leaf-tile $X$ or for the $0$-tile $X=X^0=\qT$  we have
\begin{equation*}
  \sum_{X'} w(X')^\alpha 
  \le 
  K \epsilon_0^\alpha w(X)^\alpha 
  \le 
  Lw(X)^\alpha,
\end{equation*}
and so we have the same upper bound as for an arc-tile $X$.

Now let $t>0$, and consider 
\begin{equation} 
\mathcal{H}^\alpha_t(\qT,\varrho)\coloneqq \inf\biggr\{\sum_{i\in \N} \diam_\varrho(A_i)^\alpha\biggr\}, \end{equation}
where  the infimum is taken over all countable covers 
$\{A_i\}_{i\in \N}$  of $\qT$ by sets $A_i\sub \qT$ with 
$\diam_\varrho(A_i)\le t$ for $i\in \N$.

We can choose $n\in \N$ large enough so that 
 for each $n$-tile $X$ we have 
  $$\diam_\varrho(X)\le 2w(X)\leq 2\cdot 3^{-n}\le t. $$ 
  Here we used \eqref{eq:Xrhisdiam0}  in the first inequality. Then  
  \begin{align*}
  \mathcal{H}^\alpha_t(\qT,\varrho) 
  &\leq
  \sum_{X\in \X^n} \diam_\varrho(X)^\alpha
  \le 2^\alpha 
  \sum_{X\in \X^n} w(X)^\alpha
  \leq 
 2^\alpha  L \sum_{\widetilde{X}\in \X^{n-1}}w(\widetilde{X})^\alpha
  \\
  &\leq \dots \leq  2^\alpha L^n w(X^0)^\alpha= 2^\alpha L^n,
\end{align*}
where $X^0=\qT$ is the unique $0$-tile and  $w(X^0)=1$. 
Since $L<1$, and  $n\to \infty$ as $t\to 0^+$, this implies 
\begin{equation*}
  \mathcal{H}^\alpha(\qT,\varrho) \coloneqq  \lim_{t \to 0^+}
  \mathcal{H}_t^\alpha(\qT,\varrho) =2^\alpha \lim_{n\to \infty} L^n=0, 
\end{equation*}
as desired. The proof of Theorem~\ref{thm:qtree_confdim1} is now
complete. 

\section{Remarks and open problems}
\label{sec:open-probl-concl}

The general strategy to prove the
quasisymmetric equivalence of $(\qT,d)$ and $(\qT,\varrho)$
follows a pattern that has been employed before (for example, see 
the proof of \cite[Theorem~18.1]{BM17}). In the follow-up paper
\cite{BM}  we will 
state   general conditions that ensure quasisymmetric
equivalence in similar situations. This approach is closely related to
recent work by Kigami (see \cite{Ki}). 


It is an interesting problem  whether every  qc-tree $\qT$ admits a
quasisymmetric embedding $\varphi\: \qT \ra \C$ into the complex
plane and whether one can obtain an image $T\coloneqq\varphi(\qT)$ with good geometric
properties. For example, one can  ask whether for a suitable quasisymmetric embedding $\varphi$ the image $T$ is quasi-convex with respect to the
Euclidean metric (then $T$ is geodesic if equipped with its
internal path metric) and $\C\setminus T$ is a nice domain (such
as a John domain).

For a given tree $\qT\subset \C$, we may consider the conformal map
$\varphi\colon \CDach \setminus\overline{\D} \to
\CDach\setminus\qT$. Here $\CDach=\C\cup\{\infty\}$ is the Riemann sphere and $\D=\{z\in \C: |z|<1\}$ the unit disk. Since  $\qT$ is locally
connected, $\varphi$ extends to a map on the boundary $f\colon
\partial \D\to \qT$ by Carath\'eodory's theorem and one obtains  an  equivalence
relation on $\partial \D$ given by $s\sim t \Leftrightarrow
f(s)=f(t)$.

Lin and Rohde have recently studied which equivalence relations
$\sim$ arise in this way from trees $\qT\subset \C$, where
$\CDach \setminus \qT$ is a John domain (see \cite{LR19}). In
particular, they were interested in related questions for the
\emph{continuum random tree} (CRT) (see \cite{BT18} for
references and relevant facts about the CRT in a related
setting). The CRT is geodesic, but not doubling, and so not a
qc-tree according to our terminology.

This leads to the question, whether a tree that is of bounded
turning,  but not necessarily doubling, admits a uniformization similar to the
one in Theorem~\ref{main}. In \cite{Me11} it is shown that an
arc is of bounded turning if and only if it is the
 image of $[0,1]$ under a weak quasisymmetry. In analogy,  one may ask whether a tree
that is of bounded turning is the  image of a
geodesic tree under a weak quasisymmetry.

Trees and tree-like spaces   often appear in  data structures. Our subdivision procedure as described in 
Section~\ref{sec:subdividing-tree} essentially gives 
an algorithm to decompose trees with good geometric control.
It would be interesting to see whether this procedure has applications in a data-related  context. 

\medskip 
{\bf Acknowledgments.} We  would like to thank Steffen Rohde for
some interesting discussions, in particular about
Lemma~\ref{lem:Sonne}. We are also grateful to Guy C.\ David for
reminding us of the reference \cite{Kin}, which we had overlooked
in a first version of this paper. Finally, we thank the anonymous
referee and  the editor Henryk Toru\'{n}czyk for various
helpful comments. 

M.B.\ was partially supported by NSF grants DMS-1506099 and  DMS-1808856.

\end{document}